\title{Local invariants of divergence-free webs}
\author{Wojciech Domitrz, Marcin Zubilewicz}
\date{\today}
\theoremstyle{plain}
\newtheorem{thm}{Theorem}
\newtheorem{lem}[thm]{Lemma}
\newtheorem{prop}[thm]{Proposition}
\newtheorem{propdef}[thm]{Proposition/Definition}
\newtheorem{cor}[thm]{Corollary}
\theoremstyle{definition}
\newtheorem{defn}[thm]{Definition}
\newtheorem{exmp}{Example}
\theoremstyle{remark}
\newtheorem*{rem}{Remark}
\numberwithin{equation}{section}
\let\oldsection\section%
\renewcommand{\section}{%
  \renewcommand{\theequation}{\thesection.\arabic{equation}}%
  \oldsection}%
\newcommand{\ifstr}[2]{\def\cmpstrone{#1}\def\cmpstrtwo{#2}%
\ifx\cmpstrone\cmpstrtwo}
\newcommand{\Rb}[1]{\ifstr{#1}{1}\mathbb{R}\else\mathbb{R}^{#1}\fi}
\newcommand{\maps}[3]{#1:#2\rightarrow\,#3}
\newcommand{\set}[1]{\left\{#1\right\}}
\newcommand{\smset}[1]{\textstyle\{#1\}}
\newcommand{\ival}[1]{[ #1 ]}
\newcommand{\smfrac}[2]{{\textstyle\frac{#1}{#2}}}
\newcommand{\basis}[1]{\textstyle{\smash{\frac{\partial}{\partial#1}}}}
\newcommand{\smsum}[1]{\smash{\textstyle\sum_{#1}}\,}
\DeclarePairedDelimiter\abs{\lvert}{\rvert}
\DeclarePairedDelimiter\norm{\lVert}{\rVert}
\newcommand{\Fol}{\mathfrak{F}}
\newcommand{\fol}{\mathcal{F}}
\newcommand{\gol}{\mathcal{G}}
\newcommand{\hol}{\mathcal{H}}
\newcommand{\web}{\mathcal{W}}
\newcommand{\csep}{m}
\newcommand{\kcurv}{\mathcal{K}}
\newcommand{\Vol}[1]{\operatorname{Vol}^{#1}}
\newcommand*{\Rc}{\mathrm{Rc}}
\newcommand{\smint}[1]{{\textstyle\int_{#1}}}
\newcommand{\Int}{\int\limits}
\newcommand{\IterInt}[4]{%
  \int\limits_{#1}^{#2}\hskip-.6ex\cdots\hskip-.6ex\int\limits_{#3}^{#4}}
\newcommand{\Fols}{\operatorname{Fol}}
\newcommand{\codim}{\operatorname{codim}}
\newcommand{\iprod}{\mathbin{\lrcorner}}
\begin{document}

  \maketitle
  \begin{abstract}
    The objects of our study are webs in the geometry of volume-preserving
    diffeomorphisms. We introduce two local invariants of divergence-free webs:
    a differential one, directly related to the curvature of the natural
    connection of a divergence-free $2$-web introduced by S. Tabachnikov
    (1992), and a geometric one, inspired by the classical notion of planar
    $3$-web holonomy defined by W. Blaschke and G. Thomsen (1928). We show that
    triviality of either of these invariants characterizes trivial
    divergence-free web-germs up to equivalence. We also establish some
    preliminary results regarding the full classification problem, which
    jointly generalize the theorem of S. Tabachnikov on normal forms of
    divergence-free $2$-webs. They are used to provide a canonical form and a
    complete set of invariants of a generic divergence-free web in the planar
    case. Lastly, the relevance of local triviality conditions and their
    potential applications in numerical relativity are discussed.
  \end{abstract}

\section{Introduction}

Many lines of current research in differential geometry will inevitably lead to
at least a passing consideration of almost-product structures. These objects,
which take their name from a~splitting of the tangent bundle into a direct sum
of two complementary distributions they induce, lie at the foundation of
several geometries of considerable, continuing interest. Examples include the
geometry of symplectic pairs, in which one subbundle is the skew-orthogonal
complement of the other with respect to the ambient symplectic form
\cite{bandespairgeom}; bi-Lagrangian manifolds, which come equipped with two
complementary foliations with Lagrangian leaves \cite{etayosantamaria,
bilagrangian, vaisman} acting as \emph{double polarizations} in the
quantum-mechanical setting \cite{hess}; para-Kähler structures
\cite{anciauxromon, paracomplex}, i.e. integrable almost-product structures
with compatible neutral metrics, which turn out to be equivalent to
bi-Lagrangian structures \cite{etayosantamaria}; and foliated
(pseudo-)Riemannian manifolds \cite{tondeurriem}, widely used in numerical
relativity to fix a privileged ``time'' axis by means of families of
simultaneity hypersurfaces \cite{andersson1, adm, 3p1, lefloch}.

Over the years, much effort has been made in order to understand each of the
aforementioned structures individually. To complement these studies, it might
be fruitful to adopt a broader point of view and set out to document their
shared traits. For instance: each of these objects allows us to define a
canonical volume form on the underlying manifold. This volume form, taken
together with two complementary foliations generated by the integrable
subbundles induced by the almost-product structures, constitute what S.
Tabachnikov named a \emph{divergence-free $2$-web} in his work on local
properties of bi-Lagrangian and bi-Legendrian manifolds \cite{2-webs}.
This paper aims to expand upon his work by investigating known geometrical
invariants of divergence-free webs, introducing novel ones and providing
potential applications in computational physics.

In physics, divergence-free $2$-webs accompanying the metric structure appear
naturally in the context of numerical relativity. A choice of a spacelike
foliation by hypersurfaces of simultaneity, its timelike 1-dimensional
complement and the associated Lorenzian volume element, lead naturally to an
invariant formulation of differential equations governing the stress-energy
tensor with respect to ``time''. Among them are the energy-momentum
conservation laws expressed using the divergence operator
$\operatorname{div}X$, which is defined in terms of the volume element $dV$ by
the equality $\mathcal{L}_X dV = (\operatorname{div}X)\cdot dV$ for the Lie
derivative $\mathcal{L}_X$ along the vector field $X$. Insight into structures
combining the two foliations and the volume element might simplify handling
divergences in actual computations and pave the way for further improvements in
numerical methods designed with these conservation laws in mind.

In order to motivate further discussion, we will now introduce a generalization
of the notion of divergence-free web which accommodates structures with
larger number of foliations. Let us state the relevant definitions, together
with some elementary properties and examples of such objects.

An \emph{$n$-web} is a smooth manifold $M$ of dimension $m$ equipped with a
collection of $n$ foliations $\fol_1,\ldots,\fol_n$ of $M$. In this paper we
consider only \emph{regular} webs, for which the following \emph{general
position condition} holds: for every $p\in M$
\begin{equation}
  \label{eq:dfw-gen-pos}
  \codim\bigcap_{i=1}^n T_p\fol_i = \sum_{i=1}^n \codim T_p\fol_i.
\end{equation}
In this paper, webs will not be distinguished according to the ordering of the
constituent foliations, although an explicit ordering will prove useful in
actual computations. Hence we notate them as either $\web=(M,\Fol)$ with
$\Fol=\Fols(\web)=\set{\fol_1,\ldots,\fol_n}$ or
$\web=(M,\fol_1,\ldots,\fol_n)$ according to circumstances.

The \emph{codimension} of a regular $n$-web $\web$ is defined as the
nondecreasing sequence of the numbers $\codim\fol$ for $\fol\in\Fols(\web)$. If
$\codim\web = (k,k,\ldots,k)$, the web $\web$ is said to have \emph{codimension
$k$}. A regular $n$-web together with a nowhere-vanishing volume form on $M$
will be called a \emph{divergence-free $n$-web}, following \cite{2-webs}. In
most circumstances, a regular web $\web$ equipped with a volume form $\Omega$
will be notated by the symbol $\web_\Omega$.

Given two webs $\web_M=(M,\fol_1,\ldots,\fol_n)$ and
$\web_N=(N,\gol_1,\ldots,\gol_n)$, a diffeomorphism $\maps{\varphi}{M}{N}$
carrying the leaves of $\fol_j$ onto the leaves of $\gol_{\sigma(j)}$ for some
permutation $\sigma\in S_n$ will be called an \emph{equivalence}
between $\web_M$ and $\web_N$. If, in addition, $\web_M, \web_N$ come equipped
with volume forms $\Omega_M, \Omega_N$ and $\varphi^*(\Omega_N)=\Omega_M$, then
$\varphi$ is called an \emph{equivalence of divergence-free webs}. We will
mostly focus on \emph{local equivalences}, which correspond to germs of the
above maps.

Our variant of the general position condition (\ref{eq:dfw-gen-pos}) guarantees
that every regular $n$-web $\web=(M,\fol_1,\ldots,\fol_n)$ is \emph{locally
trivial} in the sense that for each point $p\in M$ one can find a
\emph{$\web$-adapted local coordinate chart} $(U,\xi)$ with
$\xi(x)=(x_1,x_2,\ldots,x_m)$. It is defined as a chart which is
\begin{enumerate}[label=$(\arabic{enumi})$, nosep]
  \item \emph{centered at $p$}, meaning that $\xi(p)=0$;
  \item \emph{cubical}, i.e.  $\xi(U) = \prod_{i=1}^m(a_i,b_i)$ for some
    $a,b\in\Rb{m}$;
  \item such that each plaque $L\in\fol_i$ has the form
    \begin{equation}
      L=\smset{x\in\Rb{m}:
        \forall_{k=1,\ldots,\codim\fol_i}\ x_{\csep_i+k}=\gamma_k}
    \end{equation}
    for some real constants $\gamma_k\in\Rb{1}$, where $\csep_j =
    \sum_{k=1}^{j-1}\codim\fol_k$ for $j=1,2,\ldots,n+1$.
\end{enumerate}
\vspace*{1ex}
This diffeomorphism $\maps{\xi}{U}{C=\prod_{i}(a_i,b_i)}$ can be
interpreted as a local equivalence between $\web$ and a \emph{standard $n$-web
$\web_0$ of codimension} $\codim\web$ on $C\subseteq\Rb{m}$. One similarly
defines a \emph{standard divergence-free $n$-web $\web_{0;\Lambda}$} as the web
$\web_0$ equipped with the Lebesgue volume form $\Lambda = dx_1\wedge
dx_2\wedge\cdots\wedge dx_m$; a divergence-free web
$\web_\Omega$ is \emph{locally trivial} if it is locally equivalent to
$\web_{0;\Lambda}$.

\begin{exmp}
  A symplectic pair \cite{bandespairgeom} is a smooth manifold $M$ of dimension
  $2n$ equipped with a pair of closed $2$-forms $\omega,\eta$ which satisfy
  $TM=\ker\omega\oplus\ker\eta$. Forming their sum $\Omega_+ = \omega+\eta$ is
  one way to give $M$ a structure of a symplectic manifold. The two kernel
  distributions integrate to a pair of complementary foliations $\fol,\gol$
  which, taken together with $\Omega_+^n$, form a divergence-free $2$-web
  $\web_{\Omega} = (M,\fol,\gol,\Omega_+^n)$.
  In $\web_{\Omega}$-adapted coordinates $(x_1,\ldots,x_{2k},
  y_1,\ldots,y_{2(n-k)})$ one can express $\omega$ and $\eta$ as
  \begin{equation}
    \omega = \sum_{\mathclap{1\leq i<j\leq 2k}}
      \, f_{ij}(x,y)\,dx_i\wedge dx_j,\qquad
    \eta = \sum_{\mathclap{1\leq i<j \leq 2(n-k)}}
      \, g_{ij}(x,y)\,dy_i\wedge dy_j.
  \end{equation}
  The equalities $d\omega=d\eta=0$ yield $f_{ij}=f_{ij}(x)$ and
  $g_{ij}=g_{ij}(y)$, hence by applying the classical Darboux theorem
  independently to $\omega$ and $\eta$ we bring $\Omega_+$ into the standard
  form $\sum_{i=1}^k dx_{2i-1}\wedge dx_{2i} + \sum_{j=1}^{n-k}dy_{2j-1}\wedge
  dy_{2j}$. Since symplectic pairs are locally trivial, their associated
  divergence-free $2$-webs are also locally trivial.
\end{exmp}

As in the case of regular $n$-webs, every divergence-free $n$-web $\web$
with $\sum_{\fol\in\Fols(\web)}\codim\fol < \dim M$ is locally equivalent
to a standard one: by picking a $\web$-adapted coordinate system
$(x_1,x_2,\ldots,x_m)$ on $U\subseteq M$ and stretching the space along the
$x_m$-axis by an appropriately chosen transformation of the form $x\mapsto
y=(x_1,\ldots,x_{m-1},y_m(x))$ one exhibits a local equivalence between
$\web$ on $U$ and the standard divergence-free web $\web_{0;\Lambda}$ of
codimension $\codim\web$. When the number $\sum_{\fol\in\Fols(\web)}
\codim\fol$ reaches $\dim M$, the situation becomes more involved, as indicated
by the following elementary theorem.

\begin{thm}
  \label{thm:dfw-logh}
  Let $\web_0=(\Rb{m},\fol_1,\ldots,\fol_n)$ be a fixed standard $n$-web
  satisfying $\sum_{i=1}^n \codim\fol_i=m$, and let $\Lambda = dx_1\wedge
  \cdots\wedge dx_m\in\Omega^m(\Rb{m})$. Given a volume form
  $\Omega=h(x)\,dx_1\wedge \cdots\wedge dx_m$ for some smooth function-germ
  $h$ at $0$, the divergence-free web-germs at $0$ given by
  $\web_{0;\Omega}=(\Rb{m},\fol_1,\ldots,\fol_n,\Omega)$ and
  $\web_{0;\Lambda}=(\Rb{m},\fol_1,\ldots,\fol_n,\Lambda)$ are equivalent if
  and only if
  \begin{equation}
    \label{eq:dfw-logh}
      \frac{\partial \log h}{\partial x_k\partial x_l} = 0
  \end{equation}
  for each pair of indices $1\leq i<j\leq n$ and for each
  $k=\csep_i+1,\ldots,\csep_{i+1}$, $l=\csep_j+1,\ldots,\csep_{j+1}$, where
  $\csep_s = \sum_{k=1}^{s-1} \codim\fol_k$ for each $s=1,\ldots,n+1$.
\end{thm}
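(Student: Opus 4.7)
The plan is to recast condition (\ref{eq:dfw-logh}) as a separation-of-variables statement. Each inequality $1\le i<j\le n$ with $k$ ranging over block $i$ and $l$ over block $j$ imposes the vanishing of all ``cross-block'' mixed partials of $\log h$. A standard argument shows this is equivalent to the existence of a decomposition
\begin{equation*}
  \log h(x) = u_1(x^{(1)}) + \cdots + u_n(x^{(n)}),\qquad
  x^{(i)} = (x_{\csep_i+1},\ldots,x_{\csep_{i+1}}),
\end{equation*}
or equivalently a product factorization $h(x)=h_1(x^{(1)})\cdots h_n(x^{(n)})$ of $h$ as a germ at $0$, with $h_i(0)\neq 0$ for each $i$.

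For the ``if'' direction I would build an equivalence $\varphi$ block by block using this factorization. For each $i$, write $d_i=\codim\fol_i$ and construct a germ of diffeomorphism $\maps{\phi_i}{(\Rb{d_i},0)}{(\Rb{d_i},0)}$ whose Jacobian determinant equals $h_i$: when $d_i=1$ take $\phi_i(t)=\int_0^t h_i(s)\,ds$, and when $d_i>1$ use the triangular formula fixing all but the last coordinate, with the last component $\int_0^{t_{d_i}} h_i(t_1,\ldots,t_{d_i-1},s)\,ds$. The block-diagonal assembly $\varphi(x)=(\phi_1(x^{(1)}),\ldots,\phi_n(x^{(n)}))$ acts within each coordinate block, hence sends leaves of $\fol_i$ (level sets of $x^{(i)}$) to leaves of $\fol_i$, and its Jacobian determinant is $\prod_i h_i = h$, so $\varphi^*\Lambda = \Omega$.

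For the ``only if'' direction, assume $\varphi$ is an equivalence of the two divergence-free web-germs. By the definition of equivalence, $\varphi$ carries leaves of $\fol_i$ to leaves of $\fol_{\sigma(i)}$ for some permutation $\sigma\in S_n$ (necessarily preserving codimensions). Since the leaves of $\fol_i$ in $\web_0$ are precisely the level sets of $x^{(i)}$, this forces the $\sigma(i)$-th coordinate block of $\varphi$ to depend only on $x^{(i)}$. Rearranging the output blocks according to $\sigma$, the Jacobian matrix of $\varphi$ is block-diagonal with a $d_i\times d_i$ block $\partial \phi_{\sigma(i)}/\partial x^{(i)}$ depending only on $x^{(i)}$. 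Hence
\begin{equation*}
  h(x) \;=\; \det(D\varphi)(x) \;=\; \prod_{i=1}^{n} J_i(x^{(i)}),
\end{equation*}
so $\log h$ is a sum of germs each depending on a single block, which is exactly condition~(\ref{eq:dfw-logh}).

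The main technical step is the block-wise realization of a prescribed Jacobian in the ``if'' direction; this is a special case of the Moser argument but is concrete enough to write down explicitly. The ``only if'' direction is essentially book-keeping once one recognizes that preservation of the standard foliations $\fol_1,\ldots,\fol_n$ rigidifies $\varphi$ into a block-permutation form in the adapted coordinates.
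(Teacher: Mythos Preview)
Your proposal is correct and follows essentially the same approach as the paper: both directions hinge on recognizing that condition~(\ref{eq:dfw-logh}) is equivalent to a block-product factorization $h(x)=\prod_i h_i(x^{(i)})$, with necessity coming from the block-permutation form forced on any web-equivalence and sufficiency from the explicit triangular construction $\phi_i=(t_1,\ldots,t_{d_i-1},\int_0^{t_{d_i}} h_i(\ldots,s)\,ds)$ realizing a prescribed Jacobian on each block. The paper's proof is nearly identical, differing only in notation and in not naming the Moser argument.
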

\begin{proof}
  Let $\maps{\varphi}{(\Rb{m},0)}{(\Rb{m},0)}$ be a diffeomorphism-germ
  witnessing the local equivalence between $\web_{0;\Omega}$ and
  $\web_{0;\Lambda}$. By permuting the variables of the codomain chart we can
  assume that
  $\varphi(x_1,\ldots,x_m)
      = \left(\varphi_1(\mathbf{x}_1), \ldots,
      \varphi_n(\mathbf{x}_n)\right)$,
  where $\mathbf{x}_i = (x_{\csep_i\!+1},\ldots,x_{\csep_{i+1}})$ for $\csep_i
  = \sum_{k=1}^{i-1}\codim\fol_i$. Now, by expanding the equality
  $\varphi^*\Lambda = \Omega$ we arrive at an equivalent condition
  \begin{equation}
    \label{eq:dfw-dens-prod}
    h(x_1,\ldots,x_m) =
      \det\frac{d\varphi_1}{d\mathbf{x}_1}(\mathbf{x}_1)
      \,\cdots
      \,\det\frac{d\varphi_n}{d\mathbf{x}_n}(\mathbf{x}_n),
  \end{equation}
  where $\frac{d\varphi_k}{d\mathbf{x}_k}$ denotes the Jacobian matrix of
  $\varphi_k$ with respect to the variables $\mathbf{x}_k$. From this the
  necessity of condition $(\ref{eq:dfw-logh})$ follows immediately.

  For sufficiency, we note that condition (\ref{eq:dfw-logh}) is equivalent to
  the existence of some smooth functions $g_k\in C^\infty(\Rb{c_k},0)$ with
  $c_k=\codim\fol_k$ such that $h(x_1,\ldots,x_m) = g_1(\mathbf{x}_1) \cdots
  g_n(\mathbf{x}_n)$. In this case, the diffeomorphism-germ
  $\varphi(x_1,\ldots,x_m)
      = \left(\varphi_1(\mathbf{x}_1), \ldots, \varphi_n(\mathbf{x}_n)\right)$
  given by
  $\varphi_i(\mathbf{x}_i)
      = (x_{\csep_i+1},\ldots,x_{\csep_{i+1}-1},G_i(\mathbf{x}_i))$,
  where
  \begin{equation}
    G_i(\mathbf{x_i})
      = \int_0^{x_{\csep_{i+1}}}
        g_i(x_{\csep_i+1},\ldots,x_{\csep_{i+1}-1},t)\,dt,
  \end{equation}
  is a valid local equivalence between $\web_{0;\Omega}$ and
  $\web_{0;\Lambda}$.
\end{proof}

The main goal of this paper is to pinpoint and examine several kinds of local
invariants of general divergence-free webs $\web_\Omega$, the existence of
which is hinted at by this observation. One of these invariants can in fact be
defined by naively collecting the quantities from $(\ref{eq:dfw-logh})$ into
a~single geometric object.

\begin{defn}
  \label{def:dfw-k}
  The symmetric covariant $2$-tensor field $\kcurv(\web_\Omega)\in
  \Gamma(S^2(TM))$ defined in any local $\web_\Omega$-adapted coordinates
  $(x_1,\ldots,x_m)$ by
  \begin{equation}
    \kcurv(\web_\Omega) = \sum_{i\neq j}
      \sum_{k=\csep_i\!+1}^{\csep_{i+1}} \sum_{l=\csep_j\!+1}^{\csep_{j+1}}
      \frac{\partial \log h}{\partial x_k\partial x_l}(x)
      \ dx_k dx_l,
  \end{equation}
  where $\Omega=h(x)\,dx_1 \wedge dx_2\wedge\cdots\wedge dx_m$ and $\csep_i =
  \sum_{k=1}^{i-1} \codim\fol_k$ for $i=1,\ldots,n+1$, is called the
  \emph{nonuniformity tensor} of the divergence-free $n$-web
  $\web_\Omega=(M,\fol_1,\ldots,\fol_n,\Omega)$.
\end{defn}

It might be surprising that this definition actually works: a direct
calculation proves that this tensor field is well defined. Moreover, it can be
proven in a~similar way that if $\maps{\varphi}{\web_\Omega}{\web_\eta}$ is an
equivalence of divergence-free webs, then $\varphi^*\kcurv(\web_\eta) =
\kcurv(\web_\Omega)$.

This diffeo-geometric object holds minimal amount of information which
ensures the equivalence between its vanishing and the local triviality of the
web $\web_\Omega$ by means of Theorem \ref{thm:dfw-logh}. While its
construction may seem ad hoc, we will show in Theorems
\ref{thm:dfw-ricci} and \ref{thm:dfw-ricci-general} that it can be derived from
another, more natural invariant, which was introduced by S. Tabachnikov in
\cite{2-webs} in the special case of divergence-free $2$-webs in order to
determine the realizable local normal forms of bi-Legendrian manifolds.

The invariant in question is the curvature of a certain natural connection
associated with $\web_\Omega$ defined on the determinant bundle $\det T\fol$
for a fixed $\fol\in\Fols(\web_\Omega)$. It was proved in \cite{2-webs} that
its vanishing characterizes trivial divergence-free $2$-webs; it also played a
role in the solution to the problem of finding a normal form of $\Omega$ in
$\web_\Omega$-adapted coordinates provided in the same paper. One of our goals
is to define the natural connection for divergence-free $n$-webs with $n>2$ and
use its curvature to extend the results of \cite{2-webs} to this more general
setting.

We will express the connection and its curvature using Cartan's method of
moving frames \cite{chernchen}. With its help, the connection itself can be
succinctly characterized as the unique linear connection $\omega$ on
the Whitney sum of determinant bundles $\det T\fol^c$ for certain foliations
$\fol^c$ complementary to $\fol\in\Fols(\web_\Omega)$ induced from any
principal $G$-connection $\Theta$ (referred to as $\web_\Omega$-connection in
the sequel) equivariant with respect to the Lie group $G$ of differentials of
$\web_{0}$-equivalence germs, where $\web_{0}$ is the trivial divergence-free
web. This representation of $\omega$ yields a clear relationship between the
curvature invariants of a $H$-structure for a given subgroup $H\leq G$ (e.g. a
bi-Lagrangian manifold) and those of its induced divergence-free web, since a
$H$-connection associated with a reduction of the structure group from $G$ to
$H$ is itself a $G$-connection \cite{foundg1}. For example, we will show that
the \emph{nonuniformity tensor} defined above is a certain invariant part of
the Ricci tensor of a $G$-connection (Theorem \ref{thm:dfw-ricci-general});
since the natural connection of a bi-Lagrangian manifold is in particular
a~$G$-connection, we obtain the result of I. Vaisman \cite{vaisman}
characterizing Ricci-free bi-Lagrangian manifolds in terms of the induced
volume form, which was also noted in \cite{2-webs}.

Also, of note is a side remark found in \cite[Fig. 2]{2-webs}, where an
interesting geometric interpretation of the curvature form associated to a
natural connection of a divergence-free $2$-web in the planar case was
mentioned in passing. It was pointed out that its only nonzero coefficient
measures the degree to which the equality $ac=bd$ between the volumes of
adjacent quadrilateral regions $A,B,C,D$ enclosed by the leaves of the web
fails (Figure \ref{fig:dfw-taba}).
Since the notions of divergence-free $2$-web and Lagrangian $2$-web coincide,
a question arises whether a similar interpretation can be found for webs (both
divergence-free and Lagrangian) of higher (co-)dimensions. In the case of
divergence-free webs, we provide a positive answer by introducing a local
invariant inspired by the classical works of Thomsen, Blaschke and Bol on the
\emph{holonomy} of planar webs \cite{gewebe, pereira}.

\begin{figure}
  \centering
  \def\svgwidth{\textwidth}
  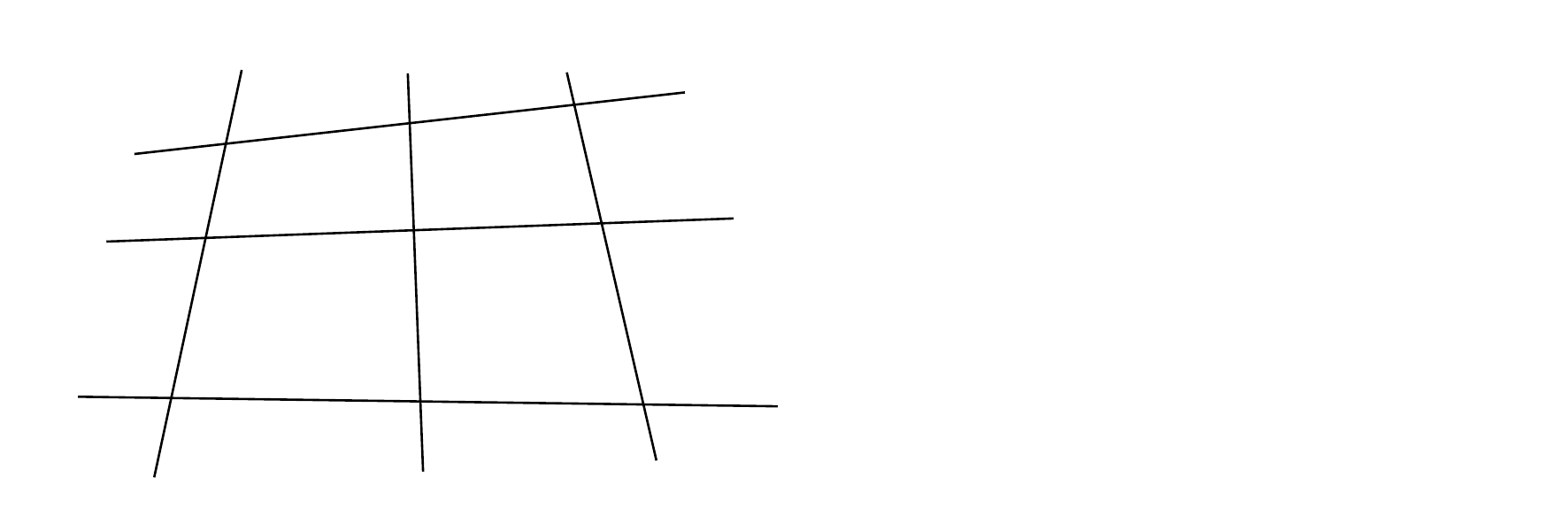
  \caption{\footnotesize Left: Tabachnikov's interpretation of divergence-free
  $2$-web's curvature in the planar case \cite{2-webs}. Right: the action of
  map-germs $r_{p;\fol}, r_{p;\gol}$. The regions bounded by the leaves of
  $\web_\Omega$ have $\Omega$-volumes equal to $\varepsilon$.}
  \label{fig:dfw-taba}
\end{figure}

At its core are certain smooth map-germs $r_{p;\fol}$ associated with each pair
$(\web_\Omega,\fol)$ consisting of a codimension-$1$ divergence-free web-germ
$\web_\Omega$ at $p\in M$ and one of its foliations $\fol\in
\Fols(\web_\Omega)$. The effect of applying $r_{p;\fol}$ to a point $q\in M$
lying on one side of the hypersurface $F_p\in\fol$ crossing $p$ is to transport
it to $q'=r_{p;\fol}(q)$ lying on the other side of $F_p$ along the curve
formed by intersecting the leaves of each $\gol\in\Fols(\web_\Omega)
\setminus\set{\fol}$ crossing $q$ in such a way, that the regions $R_{p,q}$ and
$R_{p,q'}$ enclosed by the leaves of $\web_\Omega$ crossing one of the points
$p,q$ and $p,q'$ respectively have equal $\Omega$-volumes (Figure
\ref{fig:dfw-taba}). If we choose two different foliations
$\fol,\gol\in\Fols(\web_\Omega)$, then the composition
$\ell_{p;\fol,\gol}=r_{p;\gol}\circ r_{p;\fol}\circ r_{p;\gol}\circ r_{p;\fol}$
transports each point through the concatenation of the corresponding curves.
This piecewise-smooth curve is closed whenever $\web_\Omega$ is trivial. We
will prove that nontriviality of $\ell_{p;\fol,\gol}$ is measured by the
coefficient of the nonuniformity tensor $\kcurv(\web_\Omega)$ corresponding to
the directions complementary to $T\fol,T\gol$ (Theorem
\ref{thm:dfw-geom-quantitative}).

Aggregating these maps into a single mathematical entity defines the
aforementioned local geometric invariant, called the \emph{volume-preserving
reflection holonomy} of $\web_\Omega$. In Theorem \ref{thm:dfw-geom-hicodim},
we relate the vanishing of this invariant to a generalization of S.
Tabachnikov's interpretation of the curvature of the natural connection in
terms of volumes of adjacent quadrilaterals as stated above.

Lastly, we note that assessing triviality of the divergence $2$-web induced by
the space-like foliation by hyperplanes of simultaneity in Lorentzian geometry
may prove beneficial to numerical approaches to relativistic fluid dynamics
\cite{andersson1, andersson2}. By working, if possible, in a coordinate system
which not only has privileged space and time directions, as in the classical
3+1 formalism \cite{adm}, but also makes the metric volume density constant, we
can simplify the calculation involving divergences of the fluid velocity field
occuring in e.g. the continuity equation, where the volume density plays an
important role \cite{3p1}. We will elaborate on this remark in the final
part of this paper.

The structure of the paper is as follows. In section 2, we introduce the
connections associated with a given divergnece-free $n$-web, derive their
representation in coordinates, provide some examples and relate the curvature
of the natural connection of a divergence-free $n$-web to the Ricci tensor of
the corresponding principal connection. In section 3, we show that the
nonuniformity tensor determines a divergence-free $n$-web uniquely given fixed
initial data, and use the results surrounding this theorem to give a canonical
forms and moduli space of the webs' volume forms in the planar case. In section
4 we define the geometric invariants mentioned above and establish several
geometric conditions for triviality of divergence-free $n$-webs. In the last
section we give potential applications in numerical relativity.

From now on we shall assume that divergence-free webs
$\web_\Omega=(M,\fol_1,\ldots,\fol_n,\Omega)$ satisfy
$\sum_{\fol\in\Fols(\web_\Omega)}\codim\fol = \dim M$.

\section{Connections associated with divergence-free webs}

First, we construct a set of affine connections compatible with the structure
of a given divergence-free $n$-web
$\web_\Omega=(M,\fol_1,\ldots,\fol_n,\Omega)$. While in general there are many
such connections, they share a number of properties that allow us to extract
well-defined differential invariants that do not depend on a choice of the
connection. We proceed via Cartan's method of moving frames (see e.g.
\cite{chernchen}).

\subsection{Web-adapted coframes and principal connections}

Our preferred coframes $(\xi^1, \xi^2, \ldots,
\xi^m)$ are those which satisfy
\begin{enumerate}[label=$(\alph{enumi})$,ref=\alph{enumi}]
  \item\label{it:dfw-coframe-fol}
    $T\fol_i = \bigcap_{j=\csep_i\!+1}^{\csep_{i+1}} \ker \xi^j$ for
    $i=1,2,\ldots,n$, where $\csep_i = \sum_{j=1}^{i-1}\codim\fol_j$,
  \item\label{it:dfw-coframe-vol}
    $\Omega = \xi^1\wedge\xi^2\wedge\cdots\wedge\xi^m$.
\end{enumerate}
Locally, each choice of such $1$-forms for which the ideals $I_i =
\langle \xi^{\csep_i\!+1},\ldots,\xi^{\csep_{i+1}}\rangle$ with $\csep_i =
\sum_{j=1}^{i-1} \codim\fol_j$ are all closed under the exterior derivative
fully defines a divergence-free $n$-web structure by taking
$(\ref{it:dfw-coframe-fol})$ and $(\ref{it:dfw-coframe-vol})$ as
definitions of $\fol_i$ and $\Omega$. The integrability of $T\fol_i$ follows
from Frobenius's theorem. Such coframes will be called
\emph{$\web_\Omega$-adapted}.

Each choice of a $\web_\Omega$-adapted coframe partitions the index set
$[m]=\smset{1,2,\ldots,m}$ into $n$ subsets
\begin{equation}
  \label{eq:dfw-part}
  \pi_i
    = \smset{k\in[m] : T\fol_i\subseteq\ker\xi^k}
    = \smset{\csep_i\!+1,\ldots,\csep_{i+1}},\qquad
  \csep_i=\sum_{j=1}^{i-1}\codim\fol_j
\end{equation}
for $i=1,2,\ldots,n$. This partition $\pi$ is a recurring theme of nearly
every coordinate computation throughout this paper. For notational convenience,
let us introduce the symbol $\sim$ for the equivalence relation induced by
$\pi$. More explicitly:
\begin{equation}
  \label{eq:dfw-sim}
  i\sim j \iff i,j\in\pi_k\ \text{for some }k=1,2,\ldots,n,
\end{equation}
and $i\not\sim j$ otherwise.

\begin{prop}
  \label{thm:dfw-affine}
  Let $M$ be a $m$-dimensional smooth manifold and let $\web_\Omega = (M,
  \fol_1, \ldots, \fol_n, \Omega)$ be a divergence-free $n$-web. There exists a
  torsionless affine connection $\Theta$ whose connection $1$-forms
  $\theta_{ij}$ with respect to each $\web_\Omega$-adapted coframe satisfy
  \begin{enumerate}[label=$(\arabic{enumi})$, ref=\arabic{enumi}]
    \item \label{thm:dfw-affine:fols}
      $\forall_{j\not\sim k}\ \theta^j_k=0$,\hfill
        ($\fol_i$ are $\Theta$-parallel)
    \item \label{thm:dfw-affine:vol}
      $\sum_{k=1}^m \theta^k_k = 0$. \hfill
        (the volume form $\Omega$ is $\Theta$-parallel)
  \end{enumerate}
\end{prop}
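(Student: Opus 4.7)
The plan is to construct $\Theta$ locally in an arbitrary $\web_\Omega$-adapted coframe $(\xi^1,\ldots,\xi^m)$ on some open set $U\subseteq M$, and then glue via partition of unity. Writing $\theta^j_k = \Gamma^j_{kl}\,\xi^l$ and $d\xi^j = \tfrac12\,c^j_{kl}\,\xi^k\wedge\xi^l$, the three defining properties of $\Theta$ translate into linear conditions on the Christoffel symbols: torsion-freeness reads $\Gamma^j_{kl}-\Gamma^j_{lk}=c^j_{kl}$; condition $(\ref{thm:dfw-affine:fols})$ reads $\Gamma^j_{kl}=0$ whenever $j\not\sim k$; and condition $(\ref{thm:dfw-affine:vol})$, equivalent to $\nabla\Omega=0$ via a short computation using $\Omega=\xi^1\wedge\cdots\wedge\xi^m$, reduces to the single trace equation $\sum_j \Gamma^j_{jl}=0$ for each $l$.

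The structural input on which everything rests is a combinatorial consequence of Frobenius's theorem applied to the ideals $I_i$ attached to a $\web_\Omega$-adapted coframe: since $d\xi^j\in I_i$ for $j\in\pi_i$, the structure functions obey $c^j_{kl}=0$ whenever $j\not\sim k$ \emph{and} $j\not\sim l$. With this in hand I would proceed by case analysis on how $j,k,l$ are distributed among the blocks $\pi_1,\ldots,\pi_n$. If $j\not\sim k$ and $j\not\sim l$, both Christoffels vanish and the torsion equation reduces to $0=c^j_{kl}$, which holds by Frobenius. If $j\sim k\not\sim l$ or the mirror case $j\not\sim k\sim l$, condition $(\ref{thm:dfw-affine:fols})$ rigidly sets $\Gamma^j_{lk}=0$, pinning down $\Gamma^j_{kl}=c^j_{kl}$. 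If $j,k,l$ all share a single block, the torsion equation only constrains the antisymmetric part of $\Gamma^j_{kl}$ in $(k,l)$, leaving the symmetric part entirely free.

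At this point the off-block Christoffels are completely determined and all remaining freedom is concentrated in the in-block entries. For each fixed $l$, the off-block portion of the trace is already known to equal $\sum_{j\not\sim l}c^j_{jl}$, so the trace condition reduces to the single scalar equation $\sum_{j\sim l}\Gamma^j_{jl}=-\sum_{j\not\sim l}c^j_{jl}$ in the free in-block diagonal symbols, which clearly admits solutions. This yields a torsion-free connection on $U$ satisfying $(\ref{thm:dfw-affine:fols})$ and $(\ref{thm:dfw-affine:vol})$. To globalize, I would observe that both properties are intrinsic --- $(\ref{thm:dfw-affine:fols})$ asserts that each $T\fol_i$ is $\Theta$-parallel and $(\ref{thm:dfw-affine:vol})$ asserts $\nabla\Omega=0$ --- so the set of admissible torsion-free connections on any open set is a nonempty affine subspace, and a partition-of-unity convex combination produces a global $\Theta$.

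The main obstacle I anticipate is verifying that conditions $(\ref{thm:dfw-affine:fols})$ and $(\ref{thm:dfw-affine:vol})$ are compatible with torsion-freeness: once the off-block Christoffels have been rigidly frozen by the torsion equation, the trace condition could in principle be overdetermined. The combinatorial Frobenius consequence on the $c^j_{kl}$ together with the surviving in-block symmetric freedom is precisely what rules this out, and so the delicate part of the argument is really the case analysis rather than any single abstract existence principle.
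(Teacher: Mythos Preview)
Your proof is correct and follows essentially the same route as the paper: use Frobenius integrability of the ideals $I_i$ to control the off-block structure functions, build a torsion-free connection satisfying $(\ref{thm:dfw-affine:fols})$, and then adjust diagonal terms to kill the trace for $(\ref{thm:dfw-affine:vol})$, globalizing by partition of unity. The only cosmetic differences are that the paper writes down a closed formula $\theta^j_k=-\alpha^j_k+\delta_{jk}f_k\xi^k$ rather than your case analysis, and it verifies the globalization by explicitly computing the transformation law of $\Theta$ under block-permutation transitions rather than invoking the intrinsic nature of the two conditions as you do; your abstract affine-subspace argument is in fact slightly cleaner.
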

\begin{proof}
  Fix a $\web_\Omega$-adapted coframe $(\xi^1, \xi^2, \ldots, \xi^m)$
  on a neighbourhood $U\subseteq M$ of a given point $p\in M$. Our goal is to
  find a $m\times m$ matrix of $1$-forms $\Theta=[\theta^j_k]$ which for each 
  $j=1,2,\ldots,m$ satisfy
  the structure equation
  \begin{equation}
    \label{eq:dfw-affine:structure}
    d\xi^j + \smsum{k}\theta^j_k\wedge\xi^k = 0
  \end{equation}
  of a torsionless connection and have properties $(\ref{thm:dfw-affine:fols})$
  and $(\ref{thm:dfw-affine:vol})$ given in the statement. The construction of
  $\Theta$ is as follows. Since $T\fol_i$ are involutive, one can find a
  collection of $1$-forms $\alpha^j_k\in\Omega^1(U)$ with $\alpha^j_k=0$ for
  $j\not\sim k$ such that $d\xi^j = \smsum{k}\alpha^j_k\wedge\xi^k$ for each
  $j=1,2,\ldots,m$. Let $\beta = \smsum{k}\alpha^k_k$. Expand this $1$-form
  into components with respect to the coframe: $\beta = \smsum{k}f_k\xi^k$ for
  some $f_k\in C^\infty(M)$. For each $j,k=1,2,\ldots,m$ put
  \begin{equation}
    \theta^j_k = -\alpha^j_k + \delta_{jk}f_k\xi^k
  \end{equation}
  where $\delta_{jk}$ denotes the Kronecker's delta.

  This defines the desired connection locally. To define it globally,
  use a partition of unity $\set{g_U}_{U\in\mathcal{U}}$ corresponding to the
  covering of $M$ by the coframe-domains $U\in\mathcal{U}$ to define $\Theta$
  on each $U\in\mathcal{U}$ with respect to the $\web_\Omega$-adapted coframe
  $((\xi_U)^1,\ldots,(\xi_U)^m)$ as
  \begin{equation}
    \Theta = \sum_{V\in\mathcal{U} : U\cap V\neq\varnothing}
      g_V(x)(Q_{UV}\Theta_VQ_{UV}^{-1} - (dQ_{UV})Q_{UV}^{-1}),
  \end{equation}
  where $\Theta_U$ are the matrices of connection $1$-forms on $U$ obtained
  above, while $Q_{UV}$ are the transition matrices satisfying
  $(\xi_U)^k=\smsum{l}(Q_{UV})^k_l(\xi_V)^l$ on $U\cap V$ \cite{foundg1}. Note
  that $Q_{UV}$ is a product of two matrix-valued functions $RP$, where
  $R^k_l=0$ for $k\not\sim l$ and $P$ is a constant permutation matrix
  corresponding to $\sigma\in S_m$ for which $\sigma(k)\sim\sigma(l)$ if and
  only if $k\sim l$. Moreover, $\det Q_{UV}=1$, which leads to
  $\operatorname{tr} ((dQ_{UV})Q^{-1}_{UV}) = 0$. Using these facts, it can be
  checked directly by means of the structure equation that this indeed yields a
  torsionless connection satisfying $(\ref{thm:dfw-affine:fols})$ and
  $(\ref{thm:dfw-affine:vol})$.
\end{proof}

\begin{defn}
  \label{def:dfw-affine}
  An affine torsionless connection $\Theta$ satisfying
  $(\ref{thm:dfw-affine:fols})$ and $(\ref{thm:dfw-affine:vol})$ of Proposition
  \ref{thm:dfw-affine} will be called a \emph{$\web_\Omega$-connection}.
\end{defn}

\begin{prop}
  \label{thm:dfw-affine-natural}
  Let $\varphi$ be a local equivalence of divergence-free $n$-webs
  between $\web_\Omega$ and $\web_{\tilde\Omega}$. If $\Theta$ is a
  $\web_{\tilde\Omega}$-connection, then $\varphi^*\Theta$ is a
  $\web_\Omega$-connection.
\end{prop}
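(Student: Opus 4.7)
The plan is to establish the proposition by transporting a $\web_{\tilde\Omega}$-adapted coframe to a $\web_\Omega$-adapted coframe via $\varphi$, observing that the pullback $\varphi^*\Theta$ is automatically torsionless, and then checking properties $(\ref{thm:dfw-affine:fols})$ and $(\ref{thm:dfw-affine:vol})$ at the level of connection $1$-forms componentwise.

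More concretely, fix a $\web_{\tilde\Omega}$-adapted coframe $(\tilde\xi^1,\ldots,\tilde\xi^m)$ on a neighborhood of $\varphi(p)$. Let $\sigma\in S_n$ be the permutation of foliations realized by $\varphi$, i.e. $\varphi_*T\fol_i = T\gol_{\sigma(i)}$. I would then choose a permutation $\tau\in S_m$ of the index set $[m]$ that restricts to bijections $\pi_i\to\tilde\pi_{\sigma(i)}$ between the corresponding blocks of the partitions, and define $\xi^j = \epsilon_j\,\varphi^*\tilde\xi^{\tau(j)}$, where a single sign $\epsilon_j\in\{\pm1\}$ is used on some coordinate to compensate for $\operatorname{sgn}(\tau)$ if needed. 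Using the relation $\varphi^*\tilde\Omega=\Omega$ together with the fact that $\varphi^*$ commutes with the kernel operation on forms, one checks at once that $(\xi^1,\ldots,\xi^m)$ is a $\web_\Omega$-adapted coframe and that the associated partition $\pi$ of $[m]$ corresponds to $\tilde\pi$ by $\tau$, in the sense that $j\sim k$ in $\pi$ if and only if $\tau(j)\sim\tau(k)$ in $\tilde\pi$.

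Next, I would write the connection matrix of $\varphi^*\Theta$ with respect to this coframe as $\theta^j_k = \epsilon_j\epsilon_k\,\varphi^*\tilde\theta^{\tau(j)}_{\tau(k)}$. Because $\varphi^*$ commutes with both $d$ and $\wedge$, pulling back the structure equation $d\tilde\xi^{\tau(j)}+\sum_l \tilde\theta^{\tau(j)}_l\wedge\tilde\xi^l=0$ yields $d\xi^j+\sum_k\theta^j_k\wedge\xi^k=0$, so $\varphi^*\Theta$ is torsionless. For property $(\ref{thm:dfw-affine:fols})$, if $j\not\sim k$ in $\pi$ then $\tau(j)\not\sim\tau(k)$ in $\tilde\pi$, hence $\tilde\theta^{\tau(j)}_{\tau(k)}=0$ and consequently $\theta^j_k=0$. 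For property $(\ref{thm:dfw-affine:vol})$, summing over $k$ reduces to $\sum_k\theta^k_k = \varphi^*\sum_l\tilde\theta^l_l = 0$, since the sign factors $\epsilon_k\epsilon_k=1$ cancel and $\tau$ merely permutes the summation.

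The only genuinely subtle point is the bookkeeping around the permutation $\sigma$ together with the sign issue arising from $\operatorname{sgn}(\tau)$ in the identity $\xi^1\wedge\cdots\wedge\xi^m = \operatorname{sgn}(\tau)\,\varphi^*\tilde\Omega$; this is handled by the single sign correction introduced above (or, when at least one block $\pi_i$ has size $\geq 2$, by choosing $\tau$ of even parity from the start). Once this is set up, the rest of the argument is a direct transcription of the three defining properties through the pullback, and no further computation is required.
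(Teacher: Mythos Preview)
Your proposal is correct and follows the same approach as the paper: pull back a $\web_{\tilde\Omega}$-adapted coframe, observe that the structure equation transfers verbatim under $\varphi^*$, and read off properties $(\ref{thm:dfw-affine:fols})$ and $(\ref{thm:dfw-affine:vol})$ componentwise. The paper's proof is slightly leaner because it avoids your reindexing permutation $\tau$ and sign $\epsilon_j$ altogether: since $\varphi^*(\tilde\xi^1\wedge\cdots\wedge\tilde\xi^m)=\varphi^*\tilde\xi^1\wedge\cdots\wedge\varphi^*\tilde\xi^m=\varphi^*\tilde\Omega=\Omega$, the raw pullback $(\varphi^*\tilde\xi^1,\ldots,\varphi^*\tilde\xi^m)$ is already $\web_\Omega$-adapted (with respect to the ordering $(\fol_{\sigma^{-1}(1)},\ldots,\fol_{\sigma^{-1}(n)})$, which is legitimate since webs are unordered), and the index partition is literally the same because $\codim\fol_{\sigma^{-1}(i)}=\codim\gol_i$---so no permutation of indices or sign correction is required.
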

\begin{proof}
  Choose a coframe $(\xi^1,\ldots,\xi^m)$ on the codomain of $\varphi$
  and express $\Theta$ as $[\theta^j_k]$ in terms of this coframe. Then the
  connection matrix of $\varphi^*\Theta$ in the coframe $(\varphi^*\xi^1,
  \varphi^*\xi^2, \ldots, \varphi^*\xi^m)$ is exactly
  $[\varphi^*\theta^j_k]$ by the structure equation
  $(\ref{eq:dfw-affine:structure})$. The coframe
  $(\varphi^*\xi^k)_{k=1,\ldots,m}$ is $\web_\Omega$-adapted,
  since $\varphi^*\tilde\Omega=\Omega$ and for each $\fol\in\Fols(\web_\Omega)$
  the equality $d\varphi(T\fol)=T\gol$ holds for some
  $\gol\in\web_{\tilde\Omega}$. Moreover, the matrix $[\varphi^*\theta^j_k]$
  satisfies $(\ref{thm:dfw-affine:fols})$ and $(\ref{thm:dfw-affine:vol})$ of
  Proposition \ref{thm:dfw-affine} by linearity of pullback, hence it
  represents a $\web_\Omega$-connection.
\end{proof}

\begin{lem}
  \label{thm:dfw-affine-diff}
  Let $\web_\Omega=(M,\fol_1,\ldots,\fol_n,\Omega)$ be a divergence-free
  $n$-web. Given a fixed $\web_\Omega$-connection $\hat\Theta$, the mapping
  $\Theta\mapsto \Theta-\hat\Theta$ defines a bijection between the space of
  $\web_\Omega$-connections and the space of symmetric $TM$-valued covariant
  $2$-tensor fields $D$ satisfying
  \begin{enumerate}[label=$(\arabic{enumi})$, ref=\arabic{enumi}]
    \item \label{thm:dfw-affine-diff:closed}
      $\forall_{i}\,\forall_{v,w\in T\fol_i}
      \ D(v,w) \in T\fol_i$,
    \item \label{thm:dfw-affine-diff:diag}
      $\forall_{i}\,\forall_{v\in T\fol_i}
        \,\forall_{w\in \cap_{k\neq i}T\fol_k}
        \ D(v,w) = 0$,
    \item \label{thm:dfw-affine-diff:trace}
      $\forall_{v\in TM}\ \operatorname{tr} D(v,\cdot) = 0$.
  \end{enumerate}
\end{lem}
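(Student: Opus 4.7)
The plan is to identify the space of $\web_\Omega$-connections with an affine space over the symmetric $TM$-valued $2$-tensors $D$ satisfying conditions $(\ref{thm:dfw-affine-diff:closed})$--$(\ref{thm:dfw-affine-diff:trace})$, by reducing everything to coordinate bookkeeping in a fixed $\web_\Omega$-adapted coframe. The starting point is the standard fact that the difference $D(X,Y) := \nabla_X Y - \hat\nabla_X Y$ of two affine connections is $C^\infty(M)$-linear in both slots (the Leibniz corrections cancel), so $D$ is a $TM$-valued covariant $2$-tensor. Since both $\Theta$ and $\hat\Theta$ are torsion-free, $D(X,Y) - D(Y,X) = [X,Y]-[X,Y] = 0$, so $D$ is symmetric. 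The map $\Theta\mapsto\Theta-\hat\Theta$ is clearly injective; my task is to show that its image is exactly the space described in the statement, and that $\hat\Theta+D$ is again a $\web_\Omega$-connection whenever $D$ satisfies the three conditions.

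Next I would fix a $\web_\Omega$-adapted coframe $(\xi^1,\ldots,\xi^m)$ with dual frame $(e_k)$, set $D^j_k := \theta^j_k - \hat\theta^j_k$, and expand $D^j_k = D^j_{lk}\,\xi^l$ so that $D(e_l,e_k) = D^j_{lk}\,e_j$ and symmetry reads $D^j_{lk} = D^j_{kl}$. Applying Proposition \ref{thm:dfw-affine}$(\ref{thm:dfw-affine:vol})$ to both connections yields $\sum_k D^k_k = 0$, which is precisely condition $(\ref{thm:dfw-affine-diff:trace})$. Applying Proposition \ref{thm:dfw-affine}$(\ref{thm:dfw-affine:fols})$ to both connections yields $D^j_{lk} = 0$ whenever $j\not\sim k$; by symmetry, also whenever $j\not\sim l$.

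The main piece of work is then showing that this single matrix condition (paired with symmetry) is equivalent to the conjunction of $(\ref{thm:dfw-affine-diff:closed})$ and $(\ref{thm:dfw-affine-diff:diag})$. I would use the splitting $TM = T\fol_i \oplus \bigcap_{k\neq i}T\fol_k$, which in coordinates is $T\fol_i = \operatorname{span}\{e_k : k\notin\pi_i\}$ and $\bigcap_{k\neq i}T\fol_k = \operatorname{span}\{e_k : k\in\pi_i\}$, and substitute basis vectors into each geometric condition. Condition $(\ref{thm:dfw-affine-diff:closed})$ becomes $D^j_{lk}=0$ for $j\in\pi_i$, $k,l\notin\pi_i$ (equivalently, $j\not\sim k$ and $j\not\sim l$), while $(\ref{thm:dfw-affine-diff:diag})$ becomes $D^j_{lk}=0$ for $l\notin\pi_i$, $k\in\pi_i$ (equivalently, $k\not\sim l$). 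A short case split recovers the matrix condition: if $j\not\sim k$, then either $l\sim k$ (so $j\not\sim l$ too and $(\ref{thm:dfw-affine-diff:closed})$ applies) or $l\not\sim k$ (and $(\ref{thm:dfw-affine-diff:diag})$ applies). Since each step of this translation is reversible, a symmetric $D$ satisfies $(\ref{thm:dfw-affine-diff:closed})$--$(\ref{thm:dfw-affine-diff:trace})$ if and only if $\hat\Theta + D$ is a $\web_\Omega$-connection, yielding the claimed bijection. The only real obstacle is this last coordinate bookkeeping; everything else follows from standard connection calculus.
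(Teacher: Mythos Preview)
Your proof is correct and follows essentially the same route as the paper: work in a $\web_\Omega$-adapted coframe, note that the difference of two torsion-free connections is a symmetric $TM$-valued $2$-tensor, and translate Proposition~\ref{thm:dfw-affine}$(\ref{thm:dfw-affine:fols})$--$(\ref{thm:dfw-affine:vol})$ and conditions $(\ref{thm:dfw-affine-diff:closed})$--$(\ref{thm:dfw-affine-diff:trace})$ into the same coordinate vanishing conditions on $D^j_{lk}$. The only cosmetic difference is that the paper invokes Cartan's lemma on division of $2$-forms for the symmetry step, whereas you use the standard $\nabla_XY-\hat\nabla_XY$ argument; your case split verifying that ``$D^j_{lk}=0$ for $j\not\sim k$'' is equivalent to $(\ref{thm:dfw-affine-diff:closed})\wedge(\ref{thm:dfw-affine-diff:diag})$ is in fact more explicit than the paper's.
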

\begin{proof}
  Pick a $\web_\Omega$-adapted coframe $(\xi^1,\xi^2,\ldots,\xi^m)$. Let $(e_1,
  \ldots, e_m)$ be the dual frame. The quantity $D=\smsum{i,j,k}D_{ij}^k
  e_k\otimes \xi^i\otimes\xi^j$ can be thought of as a matrix of $1$-forms
  $[\Delta^k_j]$, where $\Delta^k_j = \smsum{i}D_{ij}^k\xi^i$. The matrix
  $\Theta=\hat\Theta + [\Delta^k_j]$ is a matrix of a torsionless connection if
  and only if $D$ is a symmetric $TM$-valued $2$-tensor. Necessity follows from
  Cartan's lemma on division of $2$-forms \cite[Theorem 3.4]{chernchen} applied
  to the difference of structure equations $(\ref{eq:dfw-affine:structure})$ of
  $\Theta,\hat\Theta$, while sufficiency is a consequence of the way both
  entities transform under a change of frame. Recall the relation $j\sim k$
  defined in $(\ref{eq:dfw-sim})$. Properties
  $(\ref{thm:dfw-affine-diff:closed})$-$(\ref{thm:dfw-affine-diff:trace})$ of
  $D$ can be expressed in terms of the coframe as:
  $(\ref{thm:dfw-affine-diff:closed})$ $D_{ij}^k=0$ for $i\not\sim k$ and
  $j\not\sim k$; $(\ref{thm:dfw-affine-diff:diag})$ $D_{ij}^k=0$ for $i\not\sim
  j$; $(\ref{thm:dfw-affine-diff:trace})$ $\smsum{j}D_{ij}^j=0$; or in terms of
  $\Delta^k_j$ as:
  $(\ref{thm:dfw-affine-diff:closed})$-$(\ref{thm:dfw-affine-diff:diag})$
  $\Delta^k_j=0$ for $j\not\sim k$ (assuming the symmetry of $D$);
  $(\ref{thm:dfw-affine-diff:trace})$ $\smsum{k}\Delta^k_k = 0$. A quick
  comparison with the statement of Proposition \ref{thm:dfw-affine} makes it
  clear that $D$ has these properties if and only if $\Theta$ is a
  $\web_\Omega$-connection.
\end{proof}

In the course of the proof above we have established that for each fixed
$k=1,2,\ldots,m$ the matrices $D_{ij}^k$ formed by the coefficients of a
difference tensor in a fixed $\web_\Omega$-adapted coframe are exactly the
symmetric matrices which are zero outside of a square diagonal block spanned by
entries with $i,j=\csep_k\!+1,\ldots,\csep_{k+1}$, where $\csep_k =
\sum_{l=1}^{k-1}\codim\fol_l$. The dimension of the space of these symmetric
matrices for fixed $k$ is equal to $\frac{1}{2}c_k(c_k+1)$, where $c_k =
\codim\fol_k$. These matrices are further bound by equations $\smsum{j}
D_{kj}^j = 0$ for $k=1,\ldots,m$, which allow us to express each $D_{kk}^k$ in
terms of the their off-diagonal entries. This makes the differentials
$(dD_{ij}^k)$ for $i\leq j$ and $i\sim j \sim k$ without $i=j=k$ a valid
coframe of the bundle of difference tensors $\mathfrak{D}$. Hence, if we put
$c_i=\codim\fol_i$, the dimension
of $\mathfrak{D}$ is
\begin{equation}
  \dim\mathfrak{D} = \sum_{i=1}^n c_i\big( \smfrac{1}{2}c_i(c_i+1) - 1 \big)
    = \sum_{i=1}^n \smfrac{1}{2}c_i(c_i-1)(c_i+2).
\end{equation}
In particular, if all $c_i$ are equal to $1$, the bundle $\mathfrak{D}$ has
dimension $0$.

\begin{cor}
  Let $\web_\Omega = (M,\fol_1,\ldots,\fol_n,\Omega)$ be a codimension-$1$
  divergence-free web. A $\web_\Omega$-connection exists and is unique. \qed
\end{cor}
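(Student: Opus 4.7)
The plan is to deduce both existence and uniqueness from the preceding results. Existence is immediate from Proposition \ref{thm:dfw-affine}, so the entire content of the corollary lies in uniqueness, which will follow from Lemma \ref{thm:dfw-affine-diff} together with the dimension count displayed just before the corollary.

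More concretely, I would argue as follows. Fix any $\web_\Omega$-connection $\hat\Theta$, which exists by Proposition \ref{thm:dfw-affine}. By Lemma \ref{thm:dfw-affine-diff}, any other $\web_\Omega$-connection $\Theta$ corresponds to a unique difference tensor $D \in \Gamma(\mathfrak{D})$ satisfying conditions $(\ref{thm:dfw-affine-diff:closed})$--$(\ref{thm:dfw-affine-diff:trace})$ of that lemma. In the codimension-$1$ case we have $c_i = \codim\fol_i = 1$ for each $i = 1,\ldots,n$, so the dimension formula
\begin{equation*}
  \dim\mathfrak{D} = \sum_{i=1}^n \smfrac{1}{2}c_i(c_i-1)(c_i+2)
\end{equation*}
established above gives $\dim\mathfrak{D} = 0$. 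Hence $D = 0$ is the only allowed difference tensor, and $\Theta = \hat\Theta$.

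If desired, one could verify this vanishing directly without invoking the dimension formula, which is perhaps the cleanest presentation: in codimension $1$ the partition $\pi$ consists of singletons $\pi_i = \{i\}$, so $j \sim k$ reduces to $j = k$. Condition $(\ref{thm:dfw-affine-diff:closed})$ then forces $D_{ij}^k = 0$ unless $i = k$ or $j = k$, condition $(\ref{thm:dfw-affine-diff:diag})$ forces $D_{ij}^k = 0$ unless $i = j$, and together these imply $D_{ij}^k$ can only be nonzero when $i = j = k$. The trace condition $(\ref{thm:dfw-affine-diff:trace})$ then collapses, for each fixed $i$, to the single equation $D_{ii}^i = 0$. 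There is no nontrivial calculation to carry out and no genuine obstacle; the corollary is essentially a bookkeeping consequence of the structural results already proved.
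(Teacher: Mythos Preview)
Your proposal is correct and matches the paper's approach exactly: the corollary is marked with \qed\ because it follows immediately from Proposition \ref{thm:dfw-affine} (existence) and the dimension count $\dim\mathfrak{D}=0$ for $c_i=1$ stated just before it. Your optional direct verification via the conditions of Lemma \ref{thm:dfw-affine-diff} is also correct and simply unpacks that same count.
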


\subsection{The natural connection of a divergence-free web}

When the codimension of any of the foliations $\fol\in\Fols(\web_\Omega)$ of a
divergence-free $n$-web $\web_\Omega$ exceeds $1$, the uniqueness claim
regarding the $\web_\Omega$-connections clearly breaks down. Note, however,
that there are some invariant quantities independent of the choice of the
$\web_\Omega$-connection.

Define $\pi_i$ as in $(\ref{eq:dfw-part})$ and $j\sim
k$ as in $(\ref{eq:dfw-sim})$. Given a $\web_\Omega$-adapted coframe
$(\xi^1,\ldots,\xi^m)$ with the dual frame $(e_1,\ldots,e_m)$, for each
$\web_\Omega$-connection $\Theta = [\theta^j_k]$ and each fixed
$i=1,2,\ldots,n$ the sums $\sum_{k\in\pi_i} \theta^k_k$ remain the same
irrespective of the choice of $\Theta$, since $\sum_{k\in\pi_i} D_{jk}^k = 0$
for each difference tensor $D$ of Lemma \ref{thm:dfw-affine-diff} by properties
$(\ref{thm:dfw-affine-diff:diag})$ and $(\ref{thm:dfw-affine-diff:trace})$.

The partial traces $\sum_{k\in\pi_i} \theta^k_k$ can be interpreted in terms of
the covariant derivative $\nabla$ of $\Theta$, which acts on vector fields
$X=\sum_k X^ke_k$ by $\nabla X = \smsum{k}(dX^k + \smsum{j} X^j\theta^k_j)
\otimes e_k$. The corresponding action on multivectors of the form
$\mathbf{e_i}=e_{\csep_i\!+1}\wedge\cdots\wedge e_{\csep_{i+1}}$ with $\csep_i
= \sum_{k=1}^{i-1}\codim\fol_k$ for $i=1,2,\ldots,n$ is
\begin{equation}
  \label{eq:dfw-conn-det}
  \begin{aligned}
    &\nabla (e_{\csep_i\!+1}\wedge\cdots\wedge e_{\csep_{i+1}})
      = \sum_{k\in\pi_i}\sum_{j=1}^m
        (-1)^{k-1}\theta^j_k\otimes
          e_j\wedge e_{\csep_i\!+1}\wedge\cdots\wedge
          \widehat{e_k} \wedge\cdots\wedge e_{\csep_{i+1}} \\
      & \hskip 5em = \sum_{k\in\pi_i}\sum_{j\not\sim k}
        (-1)^{k-1}\theta^j_k\otimes
          e_j\wedge e_{\csep_i\!+1}\wedge\cdots\wedge
          \widehat{e_k} \wedge\cdots\wedge e_{\csep_{i+1}} \\
      &\hskip 6em + \big(\sum_{k\in\pi_i} \theta^k_k\big)\otimes
        e_{\csep_i\!+1}\wedge\cdots\wedge e_k\wedge\cdots\wedge
        e_{\csep_{i+1}}, \quad \pi_i = \smset{\csep_i\!+1,\ldots,\csep_{i+1}},
  \end{aligned}
\end{equation}
where a hat denotes omission. Note that the first summand vanishes as a
consequence of property $(\ref{thm:dfw-affine:fols})$ of Proposition
$\ref{thm:dfw-affine}$, yielding $\nabla\mathbf{e}_i =
(\sum_{k\in\pi_i}\theta_k^k) \otimes\mathbf{e}_i$ for $i=1,2,\ldots,n$.

The above two paragraphs allow us to conclude that the Whitney sum of line
bundles generated by the $\mathbf{e}_i$ carries a \emph{unique} linear
connection arising naturally from the structure of the web via associated
connections. We will now define these objects invariantly in terms of
components of the web $\web_\Omega=(M, \fol_1,\ldots,\fol_n,\Omega)$.

First, let $\fol_i^c$ denote the \emph{foliation complementary to $\fol_i$ with
respect to $\web_\Omega$}, which we define as the result of integrating the
(involutive) tangent distribution $\bigcap_{j\neq i}T\fol_j$. It is worthwhile
to note that, since $\bigcap_{i=1}^n T\fol_i = \{0\}$, the tangent bundle
decomposes both into $TM = \bigoplus_{i=1}^n T\fol_i^c$ and $TM = T\fol_i
\oplus T\fol_i^c$ for each $i=1,\ldots,n$. The values of the corresponding
projections at the tangent vector $X\in TM$, which we denote by
$X_{\fol_i^c}\in T\fol_i^c$ and $X_{\fol_i}\in T\fol_i$, are the unique vectors
which satisfy $X=\sum_{k=1}^n X_{\fol^c_k} = X_{\fol_i} + X_{\fol_i^c}$ for
$i=1,2,\ldots,n$.

Now, the line bundles mentioned above are exactly the \emph{determinant
bundles} $\det T\fol_i^c$, i.e. the bundles of top-degree multivectors in
$T\fol_i^c$. The preceding discussion can be summarized in the following
proposition.

\begin{propdef}
  \label{thm:dfw-conn}
  The action of a $\web_\Omega$-adapted connection $\Theta$ on $\mathcal{X}(M)$
  induces a linear connection $\omega$ on the bundle
  $\bigoplus_{i=1}^n \det T\fol_i^c$ which is independent of the choice of
  $\Theta$. This connection will be called the
  \emph{natural $\web_\Omega$-connection}. \qed
\end{propdef}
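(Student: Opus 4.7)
The plan is to turn the formula $\nabla\mathbf{e}_i = \omega_i\otimes\mathbf{e}_i$ with $\omega_i = \sum_{k\in\pi_i}\theta^k_k$, already derived in $(\ref{eq:dfw-conn-det})$, into the definition of an induced linear connection on each line bundle $\det T\fol_i^c$, and then verify that the $1$-forms $\omega_i$ genuinely depend only on the web, not on the auxiliary $\web_\Omega$-connection $\Theta$ or the chosen adapted coframe.

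First I would observe that in the frame $(e_1,\ldots,e_m)$ dual to a $\web_\Omega$-adapted coframe $(\xi^1,\ldots,\xi^m)$, the leaf tangent $T\fol_i$ is spanned by $\{e_l : l\notin\pi_i\}$, so the complementary distribution $T\fol_i^c=\bigcap_{j\neq i}T\fol_j$ is spanned by $\{e_l : l\in\pi_i\}$, and $\mathbf{e}_i$ is a local frame of $\det T\fol_i^c$. Property $(\ref{thm:dfw-affine:fols})$ of Proposition \ref{thm:dfw-affine} forces $T\fol_i^c$ to be $\Theta$-parallel: for $l\in\pi_i$ the only nonzero contributions to $\nabla e_l = \sum_j \theta^j_l\otimes e_j$ come from $j\sim l$, hence from $j\in\pi_i$. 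Consequently the Leibniz extension of $\Theta$ to $\bigwedge^{\!\bullet} TM$ preserves the subbundle $\det T\fol_i^c$, and applied to $\mathbf{e}_i$ it reproduces exactly the calculation $(\ref{eq:dfw-conn-det})$, whose off-diagonal contribution vanishes by the same property. This yields $\nabla\mathbf{e}_i = \omega_i\otimes\mathbf{e}_i$ and defines a linear connection on $\det T\fol_i^c$ locally.

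Next I would verify independence of the choice of $\Theta$. Given another $\web_\Omega$-connection $\hat\Theta$, Lemma \ref{thm:dfw-affine-diff} represents $D=\Theta-\hat\Theta$ by coefficients $D^k_{lj}$ that vanish whenever $l\not\sim j$ (by $(\ref{thm:dfw-affine-diff:diag})$) and satisfy $\sum_j D^j_{lj}=0$ for every $l$ (by $(\ref{thm:dfw-affine-diff:trace})$). The difference of partial traces is
\begin{equation*}
  \sum_{k\in\pi_i}(\theta^k_k - \hat\theta^k_k)
    = \sum_{l}\Big(\sum_{k\in\pi_i}D^k_{lk}\Big)\xi^l.
\end{equation*}
For $l\notin\pi_i$ every summand has $l\not\sim k$ and thus vanishes by $(\ref{thm:dfw-affine-diff:diag})$; for $l\in\pi_i$ the same property forces $D^k_{lk}=0$ whenever $k\notin\pi_i$, so the partial trace coincides with the full trace $\sum_{k=1}^m D^k_{lk}$ and vanishes by $(\ref{thm:dfw-affine-diff:trace})$. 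Hence $\omega_i$ depends only on $\web_\Omega$.

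Finally, since the defining relation $\nabla^{\omega_i}\sigma = \nabla^\Theta\sigma$ for $\sigma\in\Gamma(\det T\fol_i^c)$ is intrinsic, the locally defined $\omega_i$ agree on overlaps of $\web_\Omega$-adapted coframe domains and patch into a global linear connection on each $\det T\fol_i^c$; the sought connection is then the Whitney sum $\omega=\bigoplus_i\omega_i$. The main obstacle is combinatorial bookkeeping with the partition $\pi$ — in particular, confirming that on the diagonal block $l\in\pi_i$ the restricted trace $\sum_{k\in\pi_i}D^k_{lk}$ coincides with the full trace, which is precisely what lets property $(\ref{thm:dfw-affine-diff:trace})$ close the independence argument.
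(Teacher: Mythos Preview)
Your proposal is correct and follows essentially the same route as the paper: the paper also derives $\nabla\mathbf{e}_i=(\sum_{k\in\pi_i}\theta^k_k)\otimes\mathbf{e}_i$ from property $(\ref{thm:dfw-affine:fols})$ via the computation $(\ref{eq:dfw-conn-det})$, and then invokes properties $(\ref{thm:dfw-affine-diff:diag})$ and $(\ref{thm:dfw-affine-diff:trace})$ of Lemma \ref{thm:dfw-affine-diff} to conclude that the partial traces $\sum_{k\in\pi_i}\theta^k_k$ are independent of the choice of $\Theta$. Your case split on $l\in\pi_i$ versus $l\notin\pi_i$ merely spells out what the paper compresses into a single line, and your remark on globality is a welcome addition that the paper leaves implicit.
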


\begin{rem}
  Assume that $\web_\Omega$ is a codimension-$1$ divergence-free $n$-web. The
  determinant bundles $\det T\fol_i^c$ reduce to $T\fol_i^c$, allowing
  us to identify the (unique) $\web_\Omega$-connection $\Theta$ with the
  natural connection $\omega$ in this case.
\end{rem}

By uniqueness of natural connections and Proposition
\ref{thm:dfw-affine-natural}, the pullback of the natural connection of
$\web_{\tilde\Omega}$ via a local equivalence of divergence-free $n$-webs
between $\web_{\Omega}$ and $\web_{\tilde\Omega}$ coincides with the natural
connection of $\web_\Omega$. To further justify the use of the adjective
\emph{natural} in the above definition, we will show that we can characterize
$\web_\Omega$-connections in terms of the natural connection.

\begin{prop}
  \label{thm:dfw-web-vs-natural}
  Let $(M,\fol_1,\ldots,\fol_n,\Omega)$ be a divergence-free $n$-web. An
  affine torsionless connection $\Theta$ on $M$ is a $\web_\Omega$-connection
  if and only its action on $\bigoplus_{i=1}^n \det T\fol_i^c$ is well-defined
  and coincident with the action of the natural $\web_\Omega$-connection
  $\omega$.
\end{prop}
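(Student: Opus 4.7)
The forward direction is essentially tautological: by Proposition/Definition \ref{thm:dfw-conn}, every $\web_\Omega$-connection $\Theta$ induces a linear connection on $\bigoplus_{i=1}^n \det T\fol_i^c$, and by the uniqueness asserted in that statement this induced connection is exactly $\omega$. So the content lies in the converse, for which I would work locally in a $\web_\Omega$-adapted coframe $(\xi^1,\ldots,\xi^m)$ with dual frame $(e_1,\ldots,e_m)$, write $\Theta=[\theta^j_k]$, and translate the two hypotheses (well-definedness of the action, coincidence with $\omega$) into the two defining conditions of Proposition \ref{thm:dfw-affine}. Since both conditions are local, this suffices.

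The key computational input is formula (\ref{eq:dfw-conn-det}): for each $i$,
\begin{equation*}
  \nabla\mathbf{e}_i
    = \sum_{k\in\pi_i}\sum_{j\not\sim k}
      (-1)^{k-1}\theta^j_k\otimes
      e_j\wedge e_{\csep_i+1}\wedge\cdots\wedge\widehat{e_k}\wedge\cdots
        \wedge e_{\csep_{i+1}}
      + \Bigl(\sum_{k\in\pi_i}\theta^k_k\Bigr)\otimes \mathbf{e}_i.
\end{equation*}
The multivectors appearing in the first sum, one for each pair $(j,k)$ with $k\in\pi_i$ and $j\not\sim k$, are pointwise linearly independent from one another and from $\mathbf{e}_i$ (they each contain a factor $e_j$ with $j\notin\pi_i$, while $\mathbf{e}_i$ does not). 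Hence $\nabla\mathbf{e}_i$ takes values in $T^*M\otimes\det T\fol_i^c$ if and only if $\theta^j_k=0$ for every $k\in\pi_i$ and every $j\not\sim k$. Running $i$ over $1,\ldots,n$ sweeps out exactly the set of pairs $(j,k)$ with $j\not\sim k$, so the well-definedness hypothesis is equivalent to property $(\ref{thm:dfw-affine:fols})$ of Proposition \ref{thm:dfw-affine}.

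Granting this, the displayed formula collapses to $\nabla\mathbf{e}_i=\bigl(\sum_{k\in\pi_i}\theta^k_k\bigr)\otimes\mathbf{e}_i$. Pick any reference $\web_\Omega$-connection $\hat\Theta=[\hat\theta^j_k]$, whose existence is guaranteed by Proposition \ref{thm:dfw-affine}; by Proposition/Definition \ref{thm:dfw-conn}, its action on $\det T\fol_i^c$ represents $\omega$ and equals $\bigl(\sum_{k\in\pi_i}\hat\theta^k_k\bigr)\otimes\mathbf{e}_i$. Coincidence of the $\Theta$-action with $\omega$ therefore forces $\sum_{k\in\pi_i}\theta^k_k=\sum_{k\in\pi_i}\hat\theta^k_k$ for each $i$; summing over $i=1,\ldots,n$ and invoking property $(\ref{thm:dfw-affine:vol})$ for $\hat\Theta$ yields $\sum_{k=1}^m\theta^k_k=0$, which is property $(\ref{thm:dfw-affine:vol})$ for $\Theta$. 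Combined with torsionlessness (which is a hypothesis), this shows $\Theta$ is a $\web_\Omega$-connection.

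I do not expect any real obstacle: the argument is essentially a bookkeeping translation through (\ref{eq:dfw-conn-det}). The only point requiring a brief justification is the pointwise linear independence of the multivectors in the first sum of (\ref{eq:dfw-conn-det}), which is immediate once one notes that terms with $j\in\pi_i,\,j\neq k$ automatically vanish (they repeat the factor $e_j$), leaving only the terms with $j\notin\pi_i$, which are visibly distinct basis elements of $\Lambda^{\codim\fol_i}TM$.
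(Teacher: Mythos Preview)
Your proof is correct and follows essentially the same route as the paper's: both arguments work in a $\web_\Omega$-adapted coframe, invoke the expansion (\ref{eq:dfw-conn-det}) of $\nabla\mathbf{e}_i$, and read off conditions $(\ref{thm:dfw-affine:fols})$ and $(\ref{thm:dfw-affine:vol})$ of Proposition~\ref{thm:dfw-affine} from the linear independence of the multivectors and comparison with a reference $\web_\Omega$-connection. Your version is slightly more explicit in separating the well-definedness hypothesis (yielding~$(\ref{thm:dfw-affine:fols})$) from the coincidence-with-$\omega$ hypothesis (yielding~$(\ref{thm:dfw-affine:vol})$), but this is an organizational choice rather than a different method.
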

\begin{proof}
  If $\Theta$ is a $\web_\Omega$-connection, then the action of $\Theta$ on
  $\bigoplus_{i=1}^n \det T\fol_i^c$ is equal to that of $\omega$ by the
  definition of $\omega$. For the converse, suppose that $(\xi^1,\xi^2,
  \ldots,\xi^m)$ is a $\web_\Omega$-adapted coframe with dual frame
  $(e_1,e_2,\ldots,e_m)$. The connection $\Theta=[\theta^j_k]$ gives rise to a
  covariant derivative operator $\nabla$ which acts on multivectors
  $\mathbf{e}_i = e_{\csep_i\!+1} \wedge\cdots\wedge
  e_{\csep_{i+1}}\in\Gamma(\det T\fol_i)$ for $i=1,2,\ldots,n$ by means of
  $(\ref{eq:dfw-conn-det})$, where $\csep_i = \sum_{k=1}^{i-1} \codim\fol_k$.
  This action coincides with the action of the natural connection $\omega$ by
  our assumption, which implies that the right-hand side must be equal to
  $(\sum_{k=\csep_i\!+1}^{\csep_{i+1}} \tilde\theta_k^k)\otimes\mathbf{e}_i$,
  for some $\web_\Omega$-connection $\tilde\Theta=[\tilde\theta^j_k]$. From
  this we deduce: $(1)$~$\theta^j_k = 0$ for $j\not\sim k$, where we have used
  notation from $(\ref{eq:dfw-sim})$, and $(2)$~$\sum_{k=1}^m \theta^k_k =
  \sum_{k=1}^m \tilde\theta^k_k = 0$. Since $(\xi^1,\ldots,\xi^m)$ is
  $\web_\Omega$-adapted, these two conditions correspond to
  $(\ref{thm:dfw-affine:fols})$ and $(\ref{thm:dfw-affine:vol})$ of Proposition
  \ref{thm:dfw-affine}, which together with the lack of torsion make $\Theta$ a
  $\web_\Omega$-adapted connection.
\end{proof}

\begin{rem}
  The equivalence above cannot be inferred for all (not necessarily
  torsionless) affine connections, since there are $1$-forms $\theta^k_j=
  \smsum{i}\Gamma^k_{ij}\xi^i$ with coefficients $\Gamma_{ij}^k$ which are
  non-symmetric in $i,j$, but nevertheless satisfy $\theta^k_j=0$ for
  $j\not\sim k$ and $\smsum{k}\theta^k_k=0$.
\end{rem}

\subsection{Coordinate expressions}

Fix a $\web_\Omega$-adapted system of coordinates $(x_1,x_2,\ldots,x_m)$, the
standard frame $(\basis{x_1},\basis{x_2},\ldots,\basis{x_m})$ and its dual
coframe $(dx_1,dx_2,\ldots,dx_m)$. Express the volume form in terms of the
coframe as $\Omega = h(x)\, dx_1\wedge dx_2\wedge\cdots\wedge dx_m$. By
picking a $\web_\Omega$-adapted coframe, for example, $\xi^i = dx_i$ for
$i=1,2,\ldots,m-1$ and $\xi^m = h(x)\,dx_m$ and carrying out the computations
outlined in the proof of Proposition \ref{thm:dfw-affine}, one finds the matrix
$\Theta=[\theta^j_k]$ of a certain $\web_\Omega$-connection
\begin{equation}
  \theta^j_k = \begin{cases}
    \frac{\partial \log h}{\partial x_j}\cdot\xi^j & \text{if } j=k\neq m,\\
      - d\log h + \frac{1}{h}\frac{\partial \log h}{\partial x_m}\cdot\xi^m
      & \text{if } j=k=m,\\
    0 & \text{otherwise},
  \end{cases}
\end{equation}
which can be used to compute the action of the natural connection on the
determinant bundles $\bigoplus_{i=1}^n\det T\fol^c_i$. It is given by
$\nabla\mathbf{e}_i = (\sum_{k=\csep_i\!+1}^{\csep_{i+1}}\theta^k_k)
\otimes\mathbf{e}_i = \smsum{j}\tilde\omega^{j}_i\otimes\mathbf{e}_j$ with
$\csep_i = \sum_{k=1}^{i-1}\codim\fol_k$ as described in the remarks
surrounding equality $(\ref{eq:dfw-conn-det})$. After expressing the result in
the coordinate coframe $(d\mathbf{x}^1,d\mathbf{x}^2,\ldots,d\mathbf{x}^n)$ on
$\bigoplus_{i=1}^n\det T\fol_i^c$ by means of the gauge transformation $\omega
= Q^{-1}\tilde\omega Q + Q^{-1}dQ$ with the transition matrix $Q$ defined by
the relation $\xi^{\csep_j\!+1}\wedge\cdots \wedge\xi^{\csep_{j+1}} = \smsum{k}
Q^j_k\cdot d\mathbf{x}^k$, where $d\mathbf{x}^i =
dx_{\csep_i\!+1}\wedge\cdots\wedge dx_{\csep_{i+1}}\in\det T\fol_i^c$ for
$i=1,\ldots,n$, we get the final expression for the connection $1$-form
$\omega$ of the natural connection in the coordinate coframe. It takes the form
\begin{equation}
  \label{eq:dfw-coords-connection}
  \omega = \begin{pmatrix}
    \omega^1_{1} & 0 & \cdots & 0 \\
    0 & \omega^2_{2} & \cdots & 0 \\
    \vdots & \vdots & \ddots & \vdots \\
    0 & 0 & \cdots & \omega^n_{n}
  \end{pmatrix},
\end{equation}
where
\begin{equation}
  \omega^i_{i} = \sum_{k=\csep_i\!+1}^{\csep_{i+1}}
    \frac{\partial\log h}{\partial x_k}(x)\,dx_k.
\end{equation}
With this in hand, it is straightforward to compute the curvature $2$-form
(denoted by $\Xi$) via Cartan's formula, which yields
\begin{equation}
  \Xi = \begin{pmatrix}
    \Xi^1_1 & 0 & \cdots & 0 \\
    0 & \Xi^2_2 & \cdots & 0 \\
    \vdots & \vdots & \ddots & \vdots \\
    0 & 0 & \cdots & \Xi^n_n
  \end{pmatrix},
\end{equation}
where
\begin{equation}
  \label{eq:dfw-coords-curv-comp}
  \Xi^i_i = d\omega^i_i
    = \sum_{j\neq i}\left(\sum_{k=\csep_i\!+1}^{\csep_{i+1}}
      \sum_{l=\csep_j+1}^{\csep_{j+1}}
        \,\frac{\partial\log h}{\partial x_k\,\partial x_l}(x)
        \,dx_l\wedge dx_k\right).
\end{equation}
As a corollary we obtain the following result.

\begin{thm}
  A divergence-free $n$-web $\web_\Omega$ is locally trivial if and only if its
  natural connection $\omega$ is flat.
\end{thm}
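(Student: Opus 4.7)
The plan is to reduce the statement directly to Theorem~\ref{thm:dfw-logh} by means of the coordinate formula $(\ref{eq:dfw-coords-curv-comp})$ for the curvature $\Xi$ of the natural connection $\omega$. Both flatness of $\omega$ and local triviality of $\web_\Omega$ are conditions that can be tested in a neighborhood of each point $p\in M$, so I would fix such a $p$, choose a $\web_\Omega$-adapted chart $(U,\xi)$ centered at $p$, and carry out all subsequent computations there, with $\Omega=h(x)\,dx_1\wedge\cdots\wedge dx_m$ and $\omega$ in the block-diagonal form $(\ref{eq:dfw-coords-connection})$.

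The key observation is then that in $(\ref{eq:dfw-coords-curv-comp})$ the basic $2$-forms $dx_l\wedge dx_k$ with $k\not\sim l$ are linearly independent, so $\Xi\equiv 0$ on $U$ if and only if every coefficient $\frac{\partial^2\log h}{\partial x_k\partial x_l}$ with $k\not\sim l$ vanishes identically on $U$. This is precisely condition $(\ref{eq:dfw-logh})$ of Theorem~\ref{thm:dfw-logh}, which then supplies, on a sufficiently small neighborhood of $p$, a germ equivalence between $\web_\Omega$ and $\web_{0;\Lambda}$ --- in other words, local triviality at $p$.

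For the converse direction, I would invoke the naturality of $\omega$ (Proposition~\ref{thm:dfw-affine-natural} together with the uniqueness clause of Proposition/Definition~\ref{thm:dfw-conn}): any local equivalence between divergence-free $n$-webs pulls the natural connection of the target back to that of the source. Applied to an equivalence witnessing local triviality, this forces the curvature of $\omega$ at $p$ to be the pullback of the curvature of the natural connection of $\web_{0;\Lambda}$, which vanishes identically by $(\ref{eq:dfw-coords-curv-comp})$ since its density is $h\equiv 1$.

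There is no genuine obstacle here; the result is a direct corollary of Theorem~\ref{thm:dfw-logh} and the coordinate computation just performed. The only minor bookkeeping point is the interplay between the germ-level formulation of Theorem~\ref{thm:dfw-logh} and the open-set formulation of flatness, which is reconciled by smoothness of the coefficients $\frac{\partial^2\log h}{\partial x_k\partial x_l}$: their vanishing on an open neighborhood of $p$ is equivalent to the vanishing of the corresponding germs at each point of that neighborhood, so both formulations address the same class of divergence-free webs.
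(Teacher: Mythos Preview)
Your argument is correct and follows essentially the same approach as the paper: both reduce the equivalence to Theorem~\ref{thm:dfw-logh} by reading off from the coordinate expression $(\ref{eq:dfw-coords-curv-comp})$ that $\Xi\equiv 0$ is equivalent to the vanishing of all mixed partials $\frac{\partial^2\log h}{\partial x_k\partial x_l}$ with $k\not\sim l$. Your converse via naturality of $\omega$ is a valid alternative, though the paper simply uses the ``only if'' half of Theorem~\ref{thm:dfw-logh} together with the same coordinate formula, which is slightly more direct.
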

\begin{proof}
  Note that the curvature $2$-forms $\Xi^i_i$
  $(\ref{eq:dfw-coords-curv-comp})$ vanish everywhere if and only if the
  assumptions of Theorem \ref{thm:dfw-logh} are met.
\end{proof}

\begin{exmp}
  \label{ex:dfw-polar}
  Let $M=\Rb{2}\setminus\smset{0}$, $\Omega=dx\wedge dy$ and suppose that
  $\fol, \gol$ are foliations of $M$ by open half-lines emanating from $0$ and
  by concentric circles cenetered at $0$ respectively. Then the divergence-free
  $2$-web $\web_\Omega=(M,\fol,\gol,\Omega)$ is locally trivial. A local
  web-equivalence $\varphi$ witnessing its triviality at a point $p\in M$ is
  given by a restriction of the covering map $\maps{\pi}{\Rb{1}_+
  \times\Rb{1}}{M}; (\rho,\phi)\overset{\pi}{\mapsto} (\sqrt{2\rho}\cos\phi,
  \sqrt{2\rho}\sin\phi)$ to a neighbourhood of $p$.

  The map $\pi$ carries lines into geodesics of the natural connection $\omega$
  of $\web_\Omega$ by Proposition \ref{thm:dfw-affine-natural}. In polar
  coordinates $(\rho, \phi)$ they are given by equations $ar^2+b\phi=c$ for
  some $a,b,c\in\Rb{1}$; in other words, they are \emph{Fermat spirals} (Figure
  \ref{fig:dfw-spirals}). As a parametrized geodesic $\gamma(t)$, a generic
  Fermat spiral can be characterized using $\omega$ as the unique curve with
  given initial velocity $\dot\gamma(0)$ whose angular velocity
  $\dot\gamma_\gol$ with respect to the origin is constant, and such that at
  every instant $t\in\Rb{1}$ the area $\Omega(\dot\gamma_{\fol},
  \dot\gamma_{\gol})$ of an infinitesimal rectangle spanned by the projections
  $\dot\gamma_\fol\in T\fol, \dot\gamma_\gol\in T\gol$ with
  $\dot\gamma=\dot\gamma_\fol+\dot\gamma_\gol$ remains the same.
\end{exmp}

\begin{figure}
  \centering
  \def\svgwidth{\textwidth}
  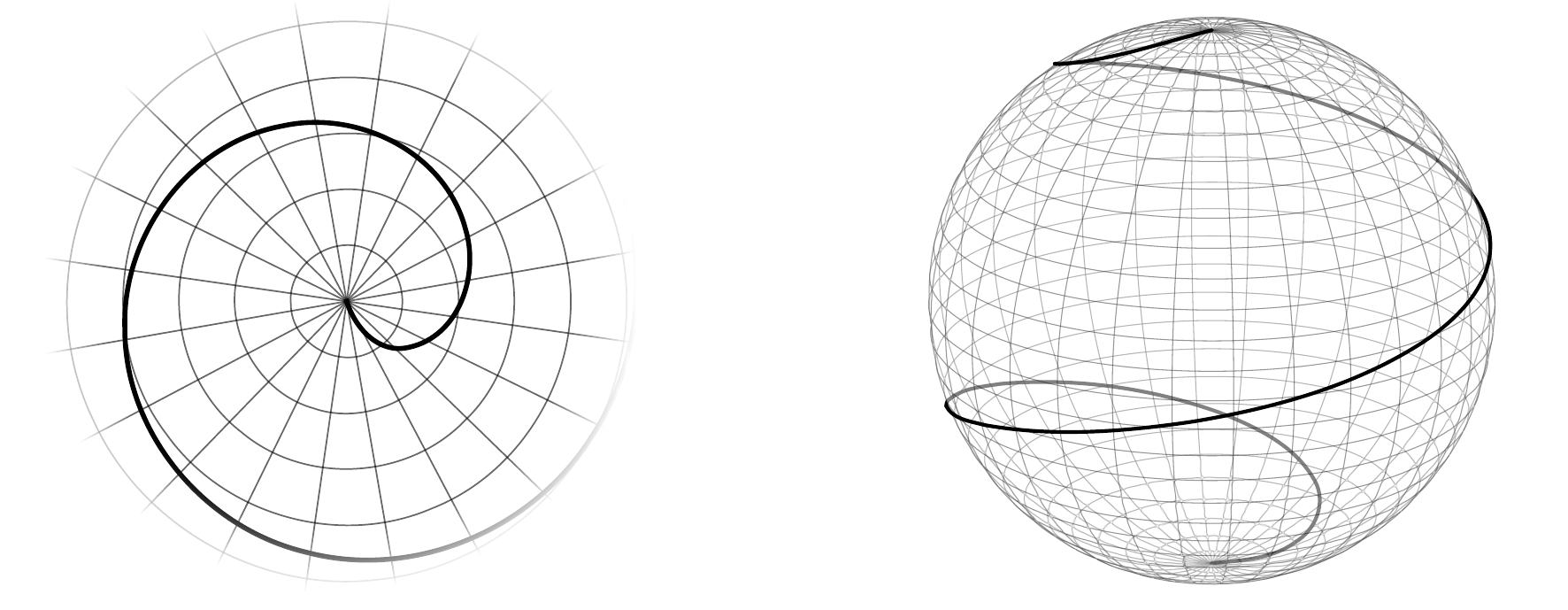
  \caption{\footnotesize Two-dimensional divergence-free webs $\web_\Omega$
  with standard volume forms $\Omega$ and geodesics of their
  $\web_\Omega$-connections. Left: a planar web formed by level sets of polar
  coordinates. The geodesics are the Fermat spirals $ar^2+b\phi=c$ where
  $r,\phi$ are the radial and angular coordinates respectively. Right: the
  web given by lines of constant latitude and longitude on $S^2$. The geodesics
  are the spherical spirals given by $az+b\phi=c$ in cylindrical coordinates
  $(z,\phi)$.}
  \label{fig:dfw-spirals}
\end{figure}

\begin{exmp}
  Let $M$ be a unit $2$-sphere $S^2\subseteq\Rb{3}$ without the north and south
  poles $(x,y,z)=(0,0,\pm 1)\in S^2$. Let $\fol$, $\gol$ be foliations of $M$
  by circles of constant latitude $z=c\in\Rb{1}$ and semicircles of constant
  longitude $ax+by=0$ for $a,b\in\mathbb{R}$, $(a,b)\neq 0$ respectively.
  Denote by $\Omega$ the Riemannian volume form $x\,dy\wedge dz + y\,dz\wedge
  dx + z\,dx\wedge dy$ induced from the ambient space. The divergence-free
  $2$-web $\web_\Omega=(M,\fol,\gol,\Omega)$ is locally trivial. A global
  coordinate system witnessing the triviality of $\web_\Omega$ is the
  cylindrical coordinate system $(\phi,z)$, where $z$ is the linear
  coordinate along the north-south axis and $\phi$ is the longitude of a given
  point. The geodesics of the natural connection $\omega$ can be computed as in
  Example \ref{ex:dfw-polar}; they are the spherical spirals satisfying $az +
  b\phi = c$ for some $a,b,c\in\Rb{1}$ (Figure \ref{fig:dfw-spirals}).

  These curves possess a property similar to the one exhibited by Fermat
  spirals. Note that a~single trasverse geodesic together with a semicircle of
  constant longitude subdivide the sphere into several quasi-rectangular
  strips. The property in question is: the strips which are not adjacent to a
  pole have equal areas. This can easily be seen in cylindrical coordinates.
\end{exmp}

\subsection{Construction in terms of Bott's connection}

That the coordinate expression for the curvature $2$-form essentially agrees
with the one derived in the article of Tabachnikov \cite{2-webs} for
divergence-free $2$-webs is not a coincidence; the natural connection $\omega$
(Definition \ref{thm:dfw-conn}) and Tabachnikov's connection $\nabla$, while
differing in the way they arise, represent the same connection.

To recall the construction of $\nabla$, let us first introduce a partial
connection acting on the quotient bundle $\nu\fol=TM/T\fol$ for some foliation
$\fol$ of $M$. Given a vector field $X\in\Gamma(T\fol)$ and a~section
$\bar{Y}\in\Gamma(\nu\fol)$ with a representative $Y\in\mathfrak{X}(M)$, define
the $D^\fol$-covariant derivative of $\bar{Y}$ along $X$ by
\begin{equation}
  \label{eq:dfw-botts-conn}
  (D^\fol)_X \bar{Y} = [X,Y] \bmod T\fol.
\end{equation}
The \emph{Bott's connection} is the corresponding \emph{partial} connection
$\maps{D^\fol}{\Gamma(T\fol)\times\Gamma(\nu\fol)}{\Gamma(\nu\fol)}$. It is
always flat. This can be verified by picking a local basis of $D^\fol$-parallel
sections in coordinates $(x_1,\ldots,x_m)$ adapted to $\fol$ in which $T\fol =
\bigcap_{i=k+1}^m \ker dx_i$, for example $(\basis{x_1}, \basis{x_2}, \ldots,
\basis{x_k})\bmod T\fol$.

Now, in the $2$-web case, the presence of a complementary foliation $\gol$
allows us to decompose the tangent bundle into $TM=T\fol\oplus T\gol$, leading
to an identification of $\nu\fol$ with $T\gol$ via a~projection onto the second
summand. Thus, $D^\fol$ acts naturally on $T\gol$ along $T\fol$, while $D^\gol$
acts on $T\fol$ along $T\gol$; both of these extend naturally to the action on
the corresponding determinant bundles $\det T\fol$, $\det T\gol$ and their
duals. A volume form $\Omega$ on $M$ defines a bundle isomorphism
$\maps{\mu}{\det T\fol}{\det T^*\gol}$ by the formula $\eta\mapsto
\eta\iprod\Omega$. The aforementioned \emph{Tabachnikov's connection} of a
divergence-free $2$-web $(M,\fol,\gol,\Omega)$ on $\det T\fol$ is the linear
connection $\maps{\nabla}{\mathfrak{X}(M) \times \Gamma(\det
T\fol)}{\Gamma(\det T\fol)}$ whose action is given by
\begin{equation}
  \label{eq:dwf-conj-conn}
  \nabla_X \eta = \mu^{-1}D^\fol_{X_\fol} (\mu\eta) + D^\gol_{X_\gol}\eta,
\end{equation}
where $X=X_\fol+X_\gol$ for $X_\fol\in\Gamma(T\fol),
X_\gol\in\Gamma(T\gol)$.

\begin{prop}
  Let $\web_\Omega=(M,\fol_1,\ldots,\fol_n,\Omega)$ be a divergence-free
  $n$-web and, for each $i=1,2,\ldots,n$, let $\fol^c_i$ be a~foliation with
  tangent distribution $T\fol^c_i = \bigcap_{j\neq i}T\fol_j$. The covariant
  derivative $\maps{D^i}{\mathfrak{X}(M) \times \Gamma(\det
  T\fol^c_i)}{\Gamma(\det T\fol^c_i)}$ associated with the restriction of its
  natural connection $\omega$ to $\det T\fol_i^c$ is exactly the Tabachnikov's
  connection $\nabla^i$ on $\det T\fol_i^c$ of the divergence-free $2$-web $(M,
  \fol_i^c, \fol_i,\Omega)$.
\end{prop}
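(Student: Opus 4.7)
The claim is that two linear connections on the same line bundle $\det T\fol_i^c$ coincide, so it suffices to verify their agreement on one nowhere-vanishing local section. My plan is to work in a $\web_\Omega$-adapted coordinate chart $(x_1,\ldots,x_m)$, evaluate both sides on the canonical local section $\mathbf{e}_i = \basis{x_{\csep_i\!+1}}\wedge\cdots\wedge \basis{x_{\csep_{i+1}}}$, and compare the resulting connection $1$-forms. The natural-connection side is already done: from the coordinate computation preceding the proposition, $D^i_X\mathbf{e}_i = \omega^i_i(X)\,\mathbf{e}_i$ with $\omega^i_i = \sum_{k\in\pi_i} \frac{\partial\log h}{\partial x_k}\,dx_k$, where $\Omega = h\, dx_1\wedge\cdots\wedge dx_m$.

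The main work is to compute $\nabla^i_X\mathbf{e}_i$ directly from Tabachnikov's formula $(\ref{eq:dwf-conj-conn})$ applied to the $2$-web $(M,\fol^c_i,\fol_i,\Omega)$. I would rely on one structural observation about Bott's connection in adapted coordinates: if $Y$ is any coordinate vector field $\basis{x_k}$, then $[Z,Y]=0$ for every coordinate field $Z$, so by $(\ref{eq:dfw-botts-conn})$ the basis fields $\basis{x_k}$ for $k\notin\pi_i$ are $D^{\fol_i^c}$-parallel sections of $\nu\fol_i^c\cong T\fol_i$, and dually the forms $dx_k|_{T\fol_i}$ are $D^{\fol_i^c}$-parallel sections of $T^*\fol_i$; analogously $\basis{x_k}$ for $k\in\pi_i$ are $D^{\fol_i}$-parallel in $T\fol_i^c$. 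Consequently $\mathbf{e}_i$ is $D^{\fol_i}$-parallel, killing the second summand of $(\ref{eq:dwf-conj-conn})$, and the contraction $\mu\mathbf{e}_i = \mathbf{e}_i\iprod\Omega = \pm h\cdot\bigwedge_{k\notin\pi_i} dx_k|_{T\fol_i}$ has a $D^{\fol_i^c}$-parallel ``form'' part. Therefore
\[
  D^{\fol_i^c}_{X_{\fol_i^c}}(\mu\mathbf{e}_i)
    = X_{\fol_i^c}(h)\cdot \bigl(\pm\textstyle\bigwedge_{k\notin\pi_i} dx_k|_{T\fol_i}\bigr)
    = X_{\fol_i^c}(\log h)\cdot \mu\mathbf{e}_i,
\]
so applying $\mu^{-1}$ yields $\nabla^i_X\mathbf{e}_i = X_{\fol_i^c}(\log h)\,\mathbf{e}_i$.

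The final step is to match this with $\omega^i_i(X)$. Since the coordinate-level projection onto $T\fol_i^c = \operatorname{span}(\basis{x_k}: k\in\pi_i)$ sends $X$ to $\sum_{k\in\pi_i} dx_k(X)\,\basis{x_k}$, we get $X_{\fol_i^c}(\log h) = \sum_{k\in\pi_i} dx_k(X)\,\frac{\partial\log h}{\partial x_k} = \omega^i_i(X)$, so $\nabla^i_X\mathbf{e}_i = \omega^i_i(X)\,\mathbf{e}_i = D^i_X\mathbf{e}_i$, completing the identification.

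The only real obstacle is bookkeeping: verifying that $\mu$ is well defined as a map into $\det T^*\fol_i$ (rather than into forms on $M$), justifying that Bott's connection extends compatibly to $\det T^*\fol_i$ and to $\det T\fol_i^c$, and keeping the sign from the contraction $\mathbf{e}_i\iprod\Omega$ consistent on both sides of the formula. Once these identifications are pinned down, the parallelism of coordinate (co)frames under Bott's connection makes the remaining computation essentially a one-line derivative of $h$.
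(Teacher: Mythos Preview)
Your argument is correct. Since two linear connections on a line bundle agree once they agree on a local frame, reducing to the coordinate section $\mathbf{e}_i$ is legitimate, and your identification of the Bott-parallel coordinate (co)frames makes the Tabachnikov side collapse to $X_{\fol_i^c}(\log h)\,\mathbf{e}_i$, which matches $\omega^i_i(X)\,\mathbf{e}_i$ by the coordinate expression \eqref{eq:dfw-coords-connection} already derived in the paper.

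The paper's own proof takes a more structural, coordinate-free route. Rather than computing both sides separately in coordinates, it works with an arbitrary $\web_\Omega$-connection $D$ and shows that $D$ itself, when restricted to the appropriate directions, \emph{is} Bott's connection: torsion-freeness together with $D$-invariance of $T\fol_i$ and $T\fol_i^c$ forces $D_XY=[X,Y]_{\fol_i^c}$ for $X$ tangent to one foliation and $Y$ tangent to the other. Then $D\Omega=0$ makes $D$ commute with the isomorphism $\mu_i$, and the Tabachnikov formula \eqref{eq:dwf-conj-conn} drops out immediately. What this buys is an explanation of \emph{why} the two connections coincide (any torsion-free connection preserving the splitting and the volume must induce Tabachnikov's connection), whereas your computation verifies the coincidence directly without invoking the intermediate $\web_\Omega$-connection. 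Your route is shorter given the coordinate work already done; the paper's route is more conceptual and would survive without that preliminary computation.
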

\begin{proof}
  Let $(\xi^1,\ldots,\xi^m)$ be a $\web_\Omega$-adapted coframe on $M$
  with the dual coframe $(e_1,\ldots,e_m)$ and define the relation $j\sim k$
  as in $(\ref{eq:dfw-sim})$. For a $\web_\Omega$-connection
  $\Theta = [\theta^j_k]$ and the associated covariant derivative $D$, property
  $(\ref{thm:dfw-affine:fols})$ of Proposition~\ref{thm:dfw-affine} means that
  for each $i=1,2,\ldots,n$ and $X=\smsum{k\not\sim i}X^ke_k\in
  \Gamma(T\fol_i)$, the vector field $D_VX = \smsum{j\not\sim i}(VX^j)e_j +
  \smsum{k\not\sim i}\smsum{j} \theta^j_k(V)X^ke_j$ is in $\Gamma(T\fol_i)$.
  Since $T\fol^c_i = \bigcap_{j\neq i}T\fol_i$, the same is true with $T\fol_i$
  replaced by its complementary foliation $T\fol^c_i$. By the lack of torsion
  of $\Theta$, for $X\in\Gamma(T\fol_i)$ and $Y\in\Gamma(T\fol_i^c)$ we obtain
  \begin{equation}
    \label{eq:dfw-affine-taba:bott}
    D_XY = (D_XY)_{\fol_i^c}
      = (D_XY)_{\fol_i^c} - (D_YX)_{\fol_i^c}
      = [X,Y]_{\fol_i^c}
      = D^{\fol_i^c}_XY,
  \end{equation}
  where the mapping $v\mapsto v_{\fol_i^c}$ is the projection from
  $TM=T\fol_i\oplus T\fol_i^c$ to $T\fol_i^c$ along $T\fol_i$.
  Let $\maps{\mu_i}{\det T\fol_i^c}{\det T^*\fol_i}$ be the map $\mu_i(\eta)
  = \eta\iprod\Omega$. Property $(\ref{thm:dfw-affine:vol})$ is equivalent to
  $D\Omega=0$, which together with
  $D\Gamma(T\fol_i^c)\subseteq\Omega^1(M)\otimes \Gamma(T\fol_i^c)$ for
  $i=1,2,\ldots,n$ gives $D_Y(\eta\iprod\Omega) = (D_Y\eta)\iprod\Omega$ for
  each $Y\in\mathfrak{X}(M)$ and $\eta\in\det T\fol^c_i$, or, in other words,
  $D_Y\circ\mu_i = \mu_i\circ D_Y$ for each $Y\in\mathfrak{X}(M)$. Let
  $\gol=\fol_i^c$ and $\hol=\fol_i$. Using the fact that $D$ commutes with
  $\mu_i$ and the equality $(\ref{eq:dfw-affine-taba:bott})$ applied to a fixed
  $\eta\in\Gamma(\det T\fol_i^c)$ via the Leibniz rule we arrive at
  \begin{equation}
    \begin{aligned}
      D_X\eta &= D_{X_{\gol}}\eta + D_{X_{\hol}}\eta \\
      &= \mu_i^{-1}D_{X_{\gol}}(\mu\eta) + D_{X_{\hol}}\eta \\
      &= \mu_i^{-1}D^{\gol}_{X_{\gol}}(\mu\eta) + D^{\hol}_{X_{\hol}}\eta =
      \nabla^i_X\eta,
    \end{aligned}
  \end{equation}
  where $X=X_\gol+X_\hol$ for $X_\gol\in\Gamma(T\gol), X_\hol\in\Gamma(T\hol)$.
  Since the action of $\omega$ on $\bigoplus_{i=1}^n \det T\fol_i^c$ comes from
  the action of $D$ regardless of the choice of $\Theta$ by Proposition
  \ref{thm:dfw-web-vs-natural}, this concludes the proof.
\end{proof}

\subsection{Nonuniformity tensor}
\label{sec:dfw-nonuniformity}

The notion of $\web_\Omega$-connection of a divergence-free $n$-web
$\web_\Omega$ allows us to give an interpretation of the nonuniformity tensor
$\kcurv(\web_\Omega)$ (Definition \ref{def:dfw-k}) in terms of more familiar
differential-geometric objects.

\begin{thm}
  \label{thm:dfw-ricci}
  The Ricci tensor $\Rc$ of the $\web_\Omega$-connection $\Theta$ of a
  divergence-free $n$-web $\web_\Omega$ of codimension $1$ is equal to the
  nonuniformity tensor $\kcurv(\web_\Omega)$.
\end{thm}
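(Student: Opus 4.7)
The key observation is that in codimension $1$ the $\web_\Omega$-connection $\Theta$ is unique (by the corollary following Lemma \ref{thm:dfw-affine-diff}), and under the splitting $TM=\bigoplus_{i=1}^n T\fol_i^c$ it coincides with the natural connection $\omega$ of Proposition/Definition \ref{thm:dfw-conn}. Since both $\Rc$ and $\kcurv(\web_\Omega)$ are tensorial, my plan is to fix a single $\web_\Omega$-adapted chart and compare them entry-wise there.

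Concretely, I would pick coordinates $(x_1,\ldots,x_m)$ with $\Omega=h(x)\,dx_1\wedge\cdots\wedge dx_m$ and reuse the explicit connection matrix already computed earlier in this subsection. In the coordinate frame the connection matrix is diagonal, with
\begin{equation*}
  \omega^j_j=\frac{\partial\log h}{\partial x_j}\,dx_j\quad\text{and}\quad \omega^i_j=0\ \text{for}\ i\neq j.
\end{equation*}
Because each $\omega^j_j$ is a multiple of $dx_j$, one has $\omega\wedge\omega=0$, so the Cartan structure equation collapses to $\Xi=d\omega$ and yields the diagonal curvature
\begin{equation*}
  \Xi^j_j=\sum_{k\neq j}\frac{\partial^2\log h}{\partial x_k\,\partial x_j}\,dx_k\wedge dx_j,
\end{equation*}
which is precisely $(\ref{eq:dfw-coords-curv-comp})$ specialized to the codimension-$1$ case, where each partition block $\pi_i$ is a singleton.

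It then remains to take the Ricci trace. Reading the Riemann components $R^i_{jkl}$ from $\Xi^i_j=\frac{1}{2}R^i_{jkl}\,dx^k\wedge dx^l$, the diagonal structure of $\Xi$ forces $R^i_{jkl}=0$ for $i\neq j$, so the contraction $\Rc_{jl}=\sum_k R^k_{jkl}$ reduces to $R^j_{jjl}$. Antisymmetry in the last two indices makes this vanish for $j=l$ (matching the absence of $(dx_j)^2$ terms in $\kcurv(\web_\Omega)$), and for $j\neq l$ the remaining coefficient is exactly the off-diagonal entry $\frac{\partial^2\log h}{\partial x_j\,\partial x_l}$ of $\kcurv(\web_\Omega)$ from Definition \ref{def:dfw-k}.

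I expect the whole argument to be essentially a direct computation; the only real obstacle is the clerical task of keeping sign conventions consistent when moving between curvature $2$-forms, Riemann components and the Ricci contraction, and of matching antisymmetric wedge-coefficients with the symmetric slots of $\kcurv$. No new geometric input is required, and since $\kcurv(\web_\Omega)$ is independent of the $\web_\Omega$-adapted chart (as remarked after Definition \ref{def:dfw-k}), the single-chart identification suffices for the global statement.
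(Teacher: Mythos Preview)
Your proposal is correct and follows essentially the same route as the paper: both arguments fix a $\web_\Omega$-adapted chart, use the explicit diagonal connection matrix and the resulting curvature $2$-form $(\ref{eq:dfw-coords-curv-comp})$, and then take the Ricci contraction, which the diagonality of $\Xi$ collapses to a single term yielding the off-diagonal second logarithmic derivatives of $h$. The paper's write-up differs only in bookkeeping (it expands $R_{ijk}^l$ with Kronecker deltas before contracting), so your caveat about sign conventions is the only place requiring care.
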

\begin{proof}
  \newcommand{\plogh}[2]{\smfrac{\partial^2\log h}{\partial x_{#1}\partial%
    x_{#2}}}%
  Pick a $\web_\Omega$-adapted coordinate system $(x_1,\ldots,x_n)$.
  Coefficients of the curvature tensor $R_{ijk}^j$ in the coordinate basis
  $(\basis{x_1},\ldots,\basis{x_n})$ can be computed from the curvature
  $2$-form $\Xi$ $(\ref{eq:dfw-coords-curv-comp})$ and are equal to
  \begin{equation}
    \begin{aligned}
      R_{ijk}^l &= \Xi^k_l(\basis{x_i},\basis{x_j})
        = \delta_{kl}\,\smsum{m\neq k}\plogh{k}{m}
          \,(\delta_{mi}\delta_{kj}-\delta_{mj}\delta_{ki}) \\
        &= \delta_{kl}\,\big(\,(1-\delta_{ki})\delta_{kj}\plogh{k}{i}
          - (1-\delta_{kj})\delta_{ki}\plogh{k}{j}\,\big),
    \end{aligned}
  \end{equation}
  where $\delta_{ij}$ is the Kronecker's delta.
  To compute the Ricci tensor, we contract the second lower index with the
  upper one. As a result we get
  \begin{equation}
    \Rc_{ik} = \smsum{j} R_{ijk}^j
      = (1-\delta_{ki})\,\plogh{k}{i}.
  \end{equation}
  The expression above is symmetric in its indices, and is $0$ if the indices
  are equal. We can rewrite this tensor using the symmetric product
  to obtain
  \begin{equation}
    \Rc = \sum_{i\neq j}
      \frac{\partial^2\log h}{\partial x_j\,\partial x_i}\,dx_idx_j
        = \kcurv(\web_\Omega).\qedhere
  \end{equation}
\end{proof}

\begin{cor}
  For a codimension-$1$ divergence-free $n$-web $\web_\Omega$ with the
  $\web_\Omega$-connection $\Theta$ the following properties are equivalent.
  \begin{enumerate}[label*=$(\arabic*)$, nosep]
    \item $\web_\Omega$ is locally trivial.
    \item $\Theta$ is flat.
    \item $\Theta$ is Ricci-flat.\qed
  \end{enumerate}
\end{cor}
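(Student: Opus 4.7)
The plan is to prove the cyclic chain of implications $(1) \Rightarrow (2) \Rightarrow (3) \Rightarrow (1)$, making use of results already established in the paper rather than carrying out new calculations.

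For $(1) \Leftrightarrow (2)$, I would invoke the theorem stating that a divergence-free $n$-web $\web_\Omega$ is locally trivial if and only if its natural connection $\omega$ is flat, combined with the remark immediately following Proposition/Definition \ref{thm:dfw-conn} which notes that, in the codimension-$1$ case, the unique $\web_\Omega$-connection $\Theta$ coincides with the natural connection $\omega$. Flatness of $\Theta$ is therefore the same as flatness of $\omega$, and the equivalence $(1) \Leftrightarrow (2)$ follows at once.

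For $(2) \Rightarrow (3)$, this is immediate from the definition of the Ricci tensor as a contraction of the Riemann curvature tensor: if the full curvature tensor of $\Theta$ vanishes, then so does its trace.

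The only nontrivial step is $(3) \Rightarrow (1)$, which I would obtain by chaining together two earlier results. Theorem \ref{thm:dfw-ricci} identifies the Ricci tensor $\Rc$ of the $\web_\Omega$-connection with the nonuniformity tensor $\kcurv(\web_\Omega)$ in the codimension-$1$ case; hence Ricci-flatness of $\Theta$ is equivalent to vanishing of $\kcurv(\web_\Omega)$. In a $\web_\Omega$-adapted coordinate system $(x_1,\ldots,x_n)$ with $\Omega = h\,dx_1\wedge\cdots\wedge dx_n$, Definition \ref{def:dfw-k} shows that this vanishing is precisely the condition $\partial^2\log h/(\partial x_k\partial x_l) = 0$ for all $k\neq l$ of Theorem \ref{thm:dfw-logh}, which in turn characterizes local equivalence of $\web_\Omega$ to the trivial divergence-free web $\web_{0;\Lambda}$. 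This closes the cycle. The only subtle point is to make sure the implication $(3) \Rightarrow (1)$ really uses the codimension-$1$ hypothesis twice: once to guarantee $\Theta$ is unique (so ``Ricci-flat'' is unambiguous) and once to apply Theorem \ref{thm:dfw-ricci}, which is stated specifically for codimension-$1$ webs; for higher codimensions the Ricci tensor of a general $\web_\Omega$-connection is not purely captured by $\kcurv(\web_\Omega)$, and the argument would have to be revisited.
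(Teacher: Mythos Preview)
Your proposal is correct and matches the paper's intended argument: the corollary is stated with a \qed and no separate proof, since it follows immediately from Theorem~\ref{thm:dfw-ricci} (identifying $\Rc$ with $\kcurv(\web_\Omega)$ in codimension~$1$), Theorem~\ref{thm:dfw-logh} (characterizing local triviality via vanishing of $\kcurv(\web_\Omega)$), and the earlier identification of $\Theta$ with the natural connection in the codimension-$1$ case. Your cyclic chain $(1)\Rightarrow(2)\Rightarrow(3)\Rightarrow(1)$ spells out exactly these ingredients.
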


The fact that the results above concern only the codimension-$1$ case might be
somewhat disappointing. Nevertheless, it happens that we can use them
to recover the nonuniformity tensor $\kcurv(\web_\Omega)$ from a
$\web_\Omega$-connection $\Theta$ for a divergence-free $n$-web $\web_\Omega =
(M, \fol_1,\ldots,\fol_n,\Omega)$ of arbitrary, possibly non-constant
codimension.

The method is based on the following observation, which is more easily seen in
$\web_\Omega$-adapted coordinates: the Ricci tensor $\Rc$ contains all the data
already present in the curvature $\kcurv(\web_\Omega)$ of not only the
codimension-$1$ web $\web_\Omega$ itself, but also of all webs
$\web_{\pi;\Omega}= (M, \fol_{\pi_1}, \ldots, \fol_{\pi_n}, \Omega)$ obtained
by forming a partition $\pi=\smset{\pi_1,\ldots,\pi_n}$ of the index set $[m]$
and letting the foliations $\fol_{\pi_j}$ be generated by the integrable
distributions $\bigcap_{k\in\pi_j} T\fol_k$. Moreover, the coordinate
expressions suggest that the Ricci tensor $\Rc$ of the $\web_\Omega$-connection
can be written as a sum of $\kcurv(\web_{\pi;\Omega})$ and a certain
block-diagonal term, the form of which is invariant with respect to local
equivalences of $\web_{\pi;\Omega}$. It turns out that, given any
divergence-free web $\web_\Omega$ and its $\web_\Omega$-connection, one can use
an auxiliary codimension-$1$ web to obtain such decomposition of the
corresponding Ricci tensor $\Rc$ and the relation between $\kcurv(\web_\Omega)$
and one of the invariant components of $\Rc$. The process will be detailed
below.

With the help of Lemma \ref{thm:dfw-affine-diff} it is possible to compute the
set of Ricci tensors $\Rc$ of $\web_\Omega$-connections $\Theta$ given a single
one. In the course of the calculations one will eventually notice that certain
coefficients of $\Rc$ inside a fixed $\web_{\Omega}$-adapted coordinate chart
do not change when replacing $\Theta$ with its perturbed variant.

\begin{lem}
  \label{thm:dfw-ricci-ext}
  Let $\web_\Omega=(M,\fol_1,\ldots,\fol_n,\Omega)$ be a divergence-free
  $n$-web. In any $\web_\Omega$-adapted coordinate system $(x_1,\ldots,x_m)$
  the Ricci tensor $\Rc$ of any $\web_\Omega$-connection $\Theta$ satisfies
  \begin{equation}
    \Rc_{kl}(x) = \frac{\partial^2\log h}{\partial x_k\,\partial x_l}(x),
  \end{equation}
  for all $i\neq j$, $\csep_i<k\leq \csep_{i+1}$ and
  $\csep_j<l\leq \csep_{j+1}$, where $\csep_s=\sum_{k=1}^{s-1} \codim \fol_k$
  for $s = 1,\ldots,n+1$.
\end{lem}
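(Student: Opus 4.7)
Plan: The strategy is to verify the formula by computing the Ricci tensor of one specific $\web_\Omega$-connection in the given coordinates and then arguing invariance of the off-block entries under any change of $\web_\Omega$-connection permitted by Lemma~\ref{thm:dfw-affine-diff}.

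First, I would construct the explicit $\web_\Omega$-connection $\Theta$ from the recipe of Section~2.3 applied to the $\web_\Omega$-adapted coframe $\xi^i = dx_i$ for $i < m$ together with $\xi^m = h\, dx_m$. A short calculation shows that, when re-expressed in the coordinate coframe, the connection matrix of $\Theta$ is diagonal with $\theta^j_j = \frac{\partial \log h}{\partial x_j}\, dx_j$, so the only nonzero Christoffel symbols are $\Gamma^j_{jj} = \frac{\partial \log h}{\partial x_j}$. Diagonality then forces $\Xi^a_b = 0$ for $a \neq b$ and $\Xi^a_a = d\theta^a_a = \sum_b \frac{\partial^2 \log h}{\partial x_b\, \partial x_a}\, dx_b \wedge dx_a$, and the Ricci contraction produces $\Rc_{kl} = \frac{\partial^2 \log h}{\partial x_k\, \partial x_l}$ for every $k \neq l$, covering in particular the off-block case $k \not\sim l$ demanded by the lemma.

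Next, I would extend this to an arbitrary $\web_\Omega$-connection $\tilde\Theta = \Theta + \Delta$ with $\Delta^a_b = \sum_c D^a_{cb}\, dx_c$, where by Lemma~\ref{thm:dfw-affine-diff} the symmetric difference tensor $D$ satisfies $D^a_{cb} = 0$ unless $a \sim c \sim b$ (i.e., all three indices lie in a single block of the partition $\pi$), together with $\sum_a D^a_{ca} = 0$ for every $c$. The curvature change reads
\[
  \tilde\Xi^a_b - \Xi^a_b = d\Delta^a_b + \theta^a_c \wedge \Delta^c_b + \Delta^a_c \wedge \theta^c_b + \Delta^a_c \wedge \Delta^c_b,
\]
and its contribution to $\tilde\Rc_{kl} - \Rc_{kl}$ with $k \in \pi_i$, $l \in \pi_j$, $i \neq j$ must be shown to vanish. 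The two wedges with $\theta$ and the quadratic $\Delta \wedge \Delta$ term are killed because they force $D$-components with cross-block lower indices, which are zero by block-sparsity of $D$. The remaining $d\Delta$-contribution collapses to $\partial_k$ applied to the single-block partial trace $\sum_{a \in \pi_j} D^a_{al}$, and this sum vanishes by the trace condition of Lemma~\ref{thm:dfw-affine-diff} after the symmetry $D^a_{al} = D^a_{la}$ is invoked.

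The main technical obstacle is the careful index bookkeeping in the second step: for each of the four summands of the curvature difference one must track which $D$-components can survive under the simultaneous restrictions $k \not\sim l$, $a \sim c \sim b$, and the diagonal support of $\Theta$; the surviving contribution must then be reorganised so that the trace condition applies cleanly to yield the final vanishing.
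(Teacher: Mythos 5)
Your proposal is correct and follows essentially the same route as the paper: compute the Ricci tensor of one explicit $\web_\Omega$-connection (your diagonal connection in the coordinate coframe is precisely the natural connection of the auxiliary codimension-$1$ $m$-web that the paper invokes via Theorem~\ref{thm:dfw-ricci}), then show the off-block entries $\Rc_{kl}$ with $k\not\sim l$ are unchanged when a difference tensor from Lemma~\ref{thm:dfw-affine-diff} is added, using block-sparsity of $D$ and $\Gamma$ for every term except the partial trace $\sum_a D^a_{al}$, which the trace condition kills. The only cosmetic difference is that you perturb the curvature $2$-forms directly, while the paper perturbs the Christoffel symbols and contracts afterwards.
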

\begin{proof}
  Let $\Theta$ be a fixed $\web_\Omega$-connection and let $D$ be a
  difference tensor described by Lemma~\ref{thm:dfw-affine-diff}. These two
  objects correspond to matrices of $1$-forms $\Theta=[\theta^j_k]$ and $\Delta
  = [\Delta^j_k]$ with $\Delta^j_k = \smsum{i}D^k_{ij}\,dx_i$ in the standard
  coframe $(dx_1,dx_2,\ldots,dx_m)$. If we denote the Riemann curvature tensors
  corresponding to $\Theta$ and $\Theta+\Delta$ in the standard (co)frame by
  $R$ and $\smash{\widetilde{R}}$ respectively, and use the usual Christoffel
  symbols $\Gamma_{ij}^k$ to denote the coefficients $\theta^k_j(\basis{x_i})$
  of the connection $\Theta$, the following identity, expressed using the
  Einstein summation convention for brevity, is obtained via direct calculation
  for each quadruple of indices $i,j,k,l=1,\ldots,m$.
  \begin{equation}
    \begin{aligned}
      \widetilde{R}_{ikj}^l = R_{ikj}^l
        & + \basis{x_i} D_{kj}^l - \basis{x_k}D_{ij}^l \\
        & + D^l_{im}\,\Gamma_{kj}^m - D_{km}^l\,\Gamma_{ij}^m \\
        & + \Gamma^l_{im}\,D_{kj}^m - \Gamma_{km}^l\,D_{ij}^m \\
        & + D^l_{im}\,D_{kj}^m - D_{km}^l\,D_{ij}^m.
    \end{aligned}
  \end{equation}
  A contraction of the $j$-index with the $l$-index will yield an equality
  relating the corresponding Ricci tensors $\Rc$ and $\smash{\widetilde{\Rc}}$.
  By $\operatorname{tr} \iota_v D = 0$ for each $v\in TM$, i.e. condition
  $(\ref{thm:dfw-affine-diff:trace})$ of Lemma \ref{thm:dfw-affine-diff}, and
  the symmetry of $D$ in the lower indices, the contractions of the second,
  fifth and last summands vanish, leaving only
  \begin{equation}
    \label{eq:dfw-ricci-ext-almost}
    \widetilde{\Rc}_{ij} = \Rc_{ij} - \basis{x_k}D_{ij}^k
      + D_{im}^k\,\Gamma_{kj}^m + \Gamma_{im}^k\,D_{kj}^m
      + D_{im}^k\,D_{kj}^m - \Gamma_{km}^k\,D_{ij}^m.
  \end{equation}

  To show that this expression reduces to
  $\smash{\widetilde{\Rc}_{ij}=\Rc_{ij}}$, define $j\sim k$ as in
  $(\ref{eq:dfw-sim})$ and note that $\Gamma_{ij}^k=0$ if:
  $(1)$  $j\not\sim k$ by condition $(\ref{thm:dfw-affine:fols})$ of
    Proposition $\ref{thm:dfw-affine}$;
  $(2)$  $i\not\sim k$ by the symmetry in lower indices, which follows
    from the lack of torsion;
  $(3)$  $i\not\sim j$ by the preceding two cases.
  The same holds for $D_{ij}^k$, since any difference tensor $D$
  is a difference of two such Christoffel symbols $\Gamma$ and
  $\smash{\tilde{\Gamma}} = \Gamma+D$. Thus, the only nonzero coefficients of
  both $D_{i j}^k$ and $\Gamma_{i j}^k$ are those
  with $i \sim j \sim k$. Applying this to
  $(\ref{eq:dfw-ricci-ext-almost})$ yields the desired equality
  $\smash{\widetilde{\Rc}}_{ij} = \Rc_{ij}$ for each $i\not\sim j$.

  The value of $\Rc_{ij}$ at any given point $p$ inside a $\web_\Omega$-adapted
  chart can be determined by choosing $\Theta$ to be any
  $\web_\Omega$-connection which coincides with the natural connection of a
  codimension-$1$ divergence-free $m$-web $(U,\gol_1,\ldots,\gol_m,\Omega)$
  with $T\gol_i = \ker dx_i$ on a small neighbourhood $U$ of $p$. Such
  a connection exists by a partition of unity argument similar to the one
  employed in Proposition $\ref{thm:dfw-affine}$. Inside the set $U$, the
  components $\Rc_{ij}$ with $i\not\sim j$ are given in Theorem
  \ref{thm:dfw-ricci}, which describes $\Rc$ in the codimension-$1$ case.
\end{proof}

Let us provide an invariant description of the \emph{off-diagonal part} of
$\Rc$ computed above. A~decomposition of the tangent bundle $TM =
\bigoplus_{i=1}^n T\fol_i^c$ for $T\fol_i^c=\bigcap_{j\neq i} T\fol_j$
allows us write the bundle of symmetric covariant $2$-tensors $S^2(M)$ as a
Whitney sum of two subbundles invariant with respect to local equivalences of
$n$-webs:
\begin{equation}
  \begin{aligned}
    S^2_{D}(M)
      &= \set{
        A\in S^2(M) : \forall_{i\neq j}
        \,\forall_{v\in T\fol_i^c}\,\forall_{v\in T\fol_j^c}
        \ A(v,w) = 0
      }, \\
    S^2_{O}(M)
      &= \set{
        A\in S^2(M) : \forall_{j}\,\forall_{v,w\in T\fol_j^c}
        \ A(v,w) = 0
      }.
  \end{aligned}
\end{equation}
The decomposition $S^2(M)=S^2_D(M)\oplus S^2_O(M)$ defines the corresponding
projections
\begin{equation}
  \maps{\mathrm{pr}_D}{S^2(M)}{S^2_D(M)},\qquad
  \maps{\mathrm{pr}_O}{S^2(M)}{S^2_O(M)}.
\end{equation}
Since the Ricci tensor of any $\web_\Omega$-connection $\Theta$ is symmetric
due to its lack of torsion, $\Omega$ being $\Theta$-parallel by condition
$(\ref{thm:dfw-affine:vol})$ of Definition $\ref{def:dfw-affine}$ and the
algebraic Bianchi identity $R_{ijk}^l + R_{jki}^l + R_{kij}^l = 0$
\cite[Chapter I, Proposition 3.1]{nomizusasaki}, we obtain the characterization
of the nonuniformity tensor in arbitrary codimension.
\begin{thm}
  \label{thm:dfw-ricci-general}
  Let $\web_\Omega$ be a divergence-free $n$-web, and let $\Rc$ denote the
  Ricci tensor of any of its $\web_\Omega$-connections. Then
  \begin{equation}
    \kcurv(\web_\Omega) = \mathrm{pr}_O(\Rc).
  \end{equation}
\end{thm}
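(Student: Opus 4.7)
The plan is to reduce the assertion to a coefficient-by-coefficient verification in $\web_\Omega$-adapted coordinates, leveraging Lemma \ref{thm:dfw-ricci-ext} and Definition \ref{def:dfw-k}, which already do most of the heavy lifting. First, I would confirm that $\Rc$ genuinely lies in $S^2(M)$, so that the decomposition $S^2(M) = S^2_D(M) \oplus S^2_O(M)$ applies. This uses the three ingredients highlighted just before the theorem: torsion-freeness of $\Theta$, the condition $\sum_k \theta_k^k = 0$ expressing $\Omega$-parallelism, and the algebraic Bianchi identity $R_{ijk}^l + R_{jki}^l + R_{kij}^l = 0$. Contracting Bianchi and using the preceding two properties to eliminate the resulting trace term yields $\Rc_{ij} = \Rc_{ji}$.

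Next, I would make the decomposition $S^2(M) = S^2_D(M) \oplus S^2_O(M)$ explicit in any $\web_\Omega$-adapted chart $(x_1,\ldots,x_m)$. Since $T\fol_i^c = \bigcap_{j\neq i} T\fol_j$ is spanned by $\{\basis{x_k} : k \in \pi_i\}$ with $\pi_i = \{\csep_i+1,\ldots,\csep_{i+1}\}$, a symmetric tensor $A = \sum_{k,l} A_{kl}\, dx_k\, dx_l$ belongs to $S^2_D(M)$ exactly when $A_{kl} = 0$ for $k \not\sim l$, and to $S^2_O(M)$ exactly when $A_{kl} = 0$ for $k \sim l$. Consequently $\mathrm{pr}_O(\Rc)$ is the tensor obtained from $\Rc$ by discarding every coefficient with $k \sim l$.

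Finally, Lemma \ref{thm:dfw-ricci-ext} supplies the identity $\Rc_{kl} = \frac{\partial^2 \log h}{\partial x_k \partial x_l}$ for precisely the index pairs with $k \not\sim l$, and Definition \ref{def:dfw-k} shows that these are exactly the coefficients of $\kcurv(\web_\Omega)$ in the same chart. Thus $\mathrm{pr}_O(\Rc)$ and $\kcurv(\web_\Omega)$ agree chart-by-chart, and the equality is global because both are tensors (the well-definedness of $\kcurv(\web_\Omega)$ was noted right after Definition \ref{def:dfw-k}).

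The real content of the theorem has already been discharged by Lemma \ref{thm:dfw-ricci-ext}, so there is no substantive obstacle left; the only delicate point is the symmetry of $\Rc$, which requires carefully combining the three properties of a $\web_\Omega$-connection listed above to knock out the antisymmetric part of the Ricci contraction. Once that is settled, what remains is essentially bookkeeping between the two coordinate descriptions of the $S^2_D$/$S^2_O$ splitting.
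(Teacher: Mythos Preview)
Your proposal is correct and follows essentially the same approach as the paper: the paper's proof is the one-liner ``By comparison in coordinates of Definition~\ref{def:dfw-k} and Lemma~\ref{thm:dfw-ricci-ext}'', with the symmetry of $\Rc$ handled in the paragraph immediately preceding the theorem using exactly the three ingredients you list. You have simply spelled out the bookkeeping that the paper leaves implicit.
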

\begin{proof}
  By comparison in coordinates of Definition \ref{def:dfw-k} and Lemma
  \ref{thm:dfw-ricci-ext}.
\end{proof}

\section{Invariants and the classification problem}

The leading question of this part is: does the nonuniformity tensor
$\kcurv(\web_\Omega)$ of a divergence-free $n$-web
$\web_\Omega=(M,\fol_1,\ldots,\fol_n,\Omega)$ of arbitrary codimension
(Definition \ref{def:dfw-k}) determine the local structure of the web up to
equivalence?

\begin{exmp}
  Denote by $\web_{\Omega_i}$ the germs at $0$ of standard divergence-free
  $2$-webs $(\Rb{2},\fol,\gol,\Omega_i)$ with $T\fol=\ker dx$,
  $T\gol=\ker dy$ and volume forms $\Omega_i = h_i(x,y)\, dx\wedge dy$ with
  \begin{equation}
    h_1(x,y) = e^{\frac{1}{4}x^2y^2},\quad h_2(x,y) =
    (1+x)(1+y)\,e^{\frac{1}{4}x^2y^2},
  \end{equation}
  The nonuniformity tensors of both of these webs are $\kcurv(\web_{\Omega_i})
  = xy\,dxdy$, yet the webs $\web_{\Omega_1},\web_{\Omega_2}$ are not locally
  equivalent. Their local equivalence $\maps{\varphi}{(\Rb{2},0)}{(\Rb{2},0)}$
  would assume the form $\varphi(x,y)=(\hat{x}(x),\hat{y}(y))$ (up to
  permutation of variables) and satisfy $\varphi^*\Omega_1 = \Omega_2$ or, more
  explicitly,
  \begin{equation}
    \label{eq:dfw-nonuniformity-not-enough}
    e^{\frac14 \hat{x}(x)^2\hat{y}(y)^2}\hat{x}'(x)\hat{y}'(y)
      = (1+x)(1+y)e^{\frac14 x^2y^2}.
  \end{equation}
  Setting $x=y=0$ would lead to $\hat{x}'(0)\hat{y}'(0)=1$. By letting one of
  the coordinates vary while keeping the other at $0$ one would obtain
  $\hat{x}(x) = \hat{x}'(0)(x+\smfrac{1}{2}x^2)$ and $\hat{y}(y) =
  \hat{y}'(0)(y+\smfrac{1}{2}y^2)$, contradicting
  $(\ref{eq:dfw-nonuniformity-not-enough})$.
\end{exmp}

To spot the missing ingredient in the recipe for reconstruction of the web from
its nonuniformity tensor, let us view the problem through the lens of
$\web_\Omega$-adapted coordinates $(x_1,\ldots,x_m)$. Bearing in mind the
coordinate expression for $\kcurv(\web_\Omega)$ in terms of $\Omega =
h(x)\,dx_1\wedge dx_2\wedge\cdots\wedge dx_m$ found in the introduction, the
reconstruction problem can be restated as follows: given several mixed parital
derivatives $\smash{\frac{\partial^2 \log h}{\partial x_i\partial x_j}}$ of
a~function-germ $\log h\in C^\infty(\Rb{d})$ at $0$, find $h$. The solution is
determined by $\kcurv(\web_\Omega)$ only up to a multiplication by smooth
functions $f_i(x)=f_i(x_{\csep_i\!+1}, \ldots, x_{\csep_{i+1}})$ with
$\csep_i=\sum_{k=1}^{i-1}\codim\fol_k$ for $i=1,2,\ldots,n$. There is a way to
resolve this ambiguity using certain ``initial conditions'' for the function
$h$, which determine uniquely the correction factors $f_i$. Moreover, any such
initial condition can be brought into a trivial normal form via a change of
$\web_\Omega$-adapted coordinates. We will now give more details on these
results.

In the following theorem we use the concept of the \emph{pullback} $\iota^*E$
of a smooth vector bundle $E\to M$ along an embedded submanifold
$S\overset{\iota}{\hookrightarrow} M$. Its sections, called \emph{sections of
$E$ along $S$}, are exactly the precompositions $\sigma\circ\iota$ of sections
$\sigma\in\Gamma(E)$ with the submanifold inclusion $\iota$.

\begin{thm}
  \label{thm:dfw-normal-recovery}
  Let $\web=(M,\fol_1,\ldots,\fol_n)$ be the germ at $p\in M$ of some
  regular $n$-web on an $m$-dimensional manifold $M$ and, for each
  $i=1,2,\ldots,n$, let $\fol^c_i$ be the foliations complementary to $\fol_i$
  generated by the tangent distributions $\bigcap_{j\neq i}T\fol_j$. Let $F_i$
  be the leaf-germ of $\fol_i^c$ crossing $p$ relative to some $\web$-adapted
  chart with the corresponding
  inclusion $\iota_i:F_i\hookrightarrow M$. Given
  \begin{enumerate}[label=$(\roman{enumi})$, ref=\roman{enumi}]
    \item \label{thm:dfw-normal-recovery:cross}
      nonvanishing smooth section-germs
      $\Omega_{0,i}\in\Gamma(\iota_i^*(\bigwedge^mTM))$ at $p$ for
      $i=1,2,\ldots,n$ such that $(\Omega_{0,i})_{|p}=(\Omega_{0,j})_{|p}$ for
      each pair of different indices $i,j$,
  \end{enumerate}
  and
  \begin{enumerate}[label=$(\roman{enumi})$, ref=\roman{enumi}, start=2]
    \item \label{thm:dfw-normal-recovery:a}
      a covariant $2$-tensor field-germ $A$ at $p$
      satisfying the identities of nonuniformity tensors
      \begin{enumerate}[label=$(\arabic{enumii})$, ref=\arabic{enumii}]
        \item\label{thm:dfw-normal-recovery:a:sym}
          $A(X,Y)=A(Y,X)$ for every $X,Y\in \mathfrak{X}(M)$,
        \item\label{thm:dfw-normal-recovery:a:diag}
          $A(X,Y)=0$ for every $X,Y\in \Gamma(T\fol^c_i)$, where
          $i=1,2,\ldots,n$, and
        \item\label{thm:dfw-normal-recovery:a:compat}
          $X\,A(Y,Z) = Y\,A(X,Z)$ for
          $X\in\Gamma(T\fol^c_i)$, $Y\in\Gamma(T\fol^c_j)$ and
          $Z\in\Gamma(T\fol^c_k)$, where $i,j,k=1,2,\ldots,n$ are such that
          $i\neq k$ and $j\neq k$,
    \end{enumerate}
  \end{enumerate}
  there exists a unique volume form-germ $\Omega\in\Omega^m(M)$ at $p$
  satisfying $\Omega\circ\iota_i = \Omega_{0,i}$ for $i=1,\ldots,n$ such that
  $A=\kcurv(\web_\Omega)$, where $\web_\Omega$ is the induced divergence-free
  $n$-web-germ $(M, \fol_1, \ldots,
  \fol_n, \Omega)$.
\end{thm}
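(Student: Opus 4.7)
The plan is to reduce the statement to a PDE problem in $\web$-adapted coordinates and solve it by induction over the foliation index. Fix a $\web$-adapted chart $(x_1,\ldots,x_m)$ centered at $p$, in which $F_i$ becomes the coordinate subspace $\{x : x_k = 0 \text{ for } k\notin \pi_i\}$. Writing $\Omega = h(x)\,dx_1\wedge\cdots\wedge dx_m$, $u = \log h$, $u_{0,i} = \log h_{0,i}$, the initial data becomes $u|_{F_i} = u_{0,i}(x_{\pi_i})$ with $u_{0,i}(0) = u_0$ for all $i$, and the task reduces to finding a unique smooth $u$ at $0$ satisfying these boundary conditions together with the PDE system $\partial_k \partial_l u(x) = A_{kl}(x)$ for $k\in\pi_i$, $l\in\pi_j$, $i\neq j$. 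Coordinate-independence of the resulting $\Omega$ will then follow from the uniqueness part of the argument.

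Uniqueness is immediate from Theorem \ref{thm:dfw-logh}: the difference $v$ of two solutions satisfies $\partial_k\partial_l v = 0$ whenever $k\not\sim l$, and hence decomposes as $v(x) = \sum_i v_i(x_{\pi_i})$. Imposing $v|_{F_i} = 0$ forces each $v_i$ to be constant; evaluation at $p$ makes all these constants zero.

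For existence, introduce the nested submanifolds $S_k = \{x : x_{\pi_j}=0 \text{ for }j>k\}$ and construct germs $u^{(k)}\in C^\infty(S_k,0)$ by induction, each satisfying $u^{(k)}|_{F_i} = u_{0,i}$ for $i\leq k$ and $\partial_a\partial_b u^{(k)} = A_{ab}|_{S_k}$ whenever $a,b$ lie in distinct groups among $\pi_1,\ldots,\pi_k$. Set $u^{(1)}:=u_{0,1}$ as the base case. In the inductive step, first define the $\pi_{k+1}$-derivatives of the candidate $u^{(k+1)}$ on $S_{k+1}$ by
\begin{equation*}
  \partial_l u^{(k+1)}(x) = \partial_l u_{0,k+1}(x_{\pi_{k+1}})
    + \int_\gamma \sum_{a\in\pi_1\cup\cdots\cup\pi_k} A_{al}(y)\,dy_a,
    \quad l\in\pi_{k+1},
\end{equation*}
where $\gamma$ is a curve in $S_{k+1}$ from $(0,x_{\pi_{k+1}})\in F_{k+1}$ to $x$ moving only in the $\pi_1\cup\cdots\cup\pi_k$-directions. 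The path-independence of this integral and the subsequent Frobenius integrability of the resulting $1$-form $\sum_{l\in\pi_{k+1}}(\partial_l u^{(k+1)})\,dy_l$ both follow from condition \ref{thm:dfw-normal-recovery:a:compat}: one applies it with $Z$ in $\pi_{k+1}$ and $X,Y$ in groups $\leq k$ for the former, and with $X,Y$ in $\pi_{k+1}$ and $Z$ in a group $\leq k$ for the latter, in each case the requirement that the index of $Z$ differ from those of $X$ and $Y$ being automatically satisfied. Integration along a $\pi_{k+1}$-path then gives $u^{(k+1)}$ with $u^{(k+1)}|_{S_k}=u^{(k)}$, preserving both $u^{(k+1)}|_{F_i} = u_{0,i}$ for $i\leq k$ (via the inductive hypothesis) and $u^{(k+1)}|_{F_{k+1}} = u_{0,k+1}$ (by direct check).

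The main nontrivial verification will be that $\partial_a\partial_b u^{(k+1)} = A_{ab}$ also holds for $a,b$ in distinct groups both with indices $\leq k$. Differentiating the integral term twice with respect to $x_a$ and $x_b$, a single application of condition \ref{thm:dfw-normal-recovery:a:compat} (with $X=\partial_b$, $Y=\partial_l$ for $l\in\pi_{k+1}$, and $Z=\partial_a$) converts $\partial_b A_{al}$ into $\partial_l A_{ab}$; the residual $\pi_{k+1}$-integral then collapses via the fundamental theorem of calculus to $A_{ab}(x)-A_{ab}(\hat x)$, where $\hat x$ is the projection of $x$ onto $S_k$. Combined with the inductive identity $\partial_a\partial_b u^{(k)}(\hat x) = A_{ab}(\hat x)$, this yields $\partial_a\partial_b u^{(k+1)}(x) = A_{ab}(x)$, closing the induction; setting $u:=u^{(n)}$ and $h:=e^u$ produces the desired $\Omega$. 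The principal obstacle is not any individual computation but the bookkeeping of index constraints in condition \ref{thm:dfw-normal-recovery:a:compat} at each induction stage, together with the verification that all leaf restrictions are preserved layer by layer — each reducing, however, to a short direct check.
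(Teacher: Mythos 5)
Your proposal is correct; every step I checked goes through, including the index bookkeeping for condition $(\ref{thm:dfw-normal-recovery:a:compat})$ (in particular the cases where $X$ and $Y$ lie in the same block, which the hypothesis deliberately permits since only the index of $Z$ is constrained). The route is a genuine reorganization of the paper's argument rather than a copy of it. The paper also reduces to the system $\frac{\partial^2\log h}{\partial x_k\partial x_l}=A_{kl}$ for $k\not\sim l$ in a $\web$-adapted chart, but it works multiplicatively with $h$ itself: it converts the system into the integral identity $(\ref{eq:dfw-normal-recovery-int})$ expressing $h(x)$ through values of $h$ at points with strictly fewer nonzero coordinates, verifies via condition $(\ref{thm:dfw-normal-recovery:a:compat})$ and Fubini that the competing expressions $\rho_{jk}$ agree, and then inducts over the lattice of coordinate subspaces $F_I$, $I\subseteq[m]$, obtaining uniqueness from the recursion itself. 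You instead work additively with $u=\log h$, replace the lattice by the linear chain $S_1\subset\cdots\subset S_n$ indexed by foliation blocks, and in each step perform a two-stage integration: first produce the new partials $\partial_l u^{(k+1)}$ by a path integral in the old directions (path-independence from $(\ref{thm:dfw-normal-recovery:a:compat})$ with $Z$ in the new block), then integrate the resulting closed $1$-form in the new directions (closedness again from $(\ref{thm:dfw-normal-recovery:a:compat})$, with $Z$ in an old block), with the residual verification $\partial_a\partial_b u^{(k+1)}=A_{ab}$ collapsing by the fundamental theorem of calculus exactly as you say. Your separate uniqueness argument via the splitting $v=\sum_i v_i$ of a function with vanishing off-block mixed partials is clean and is essentially the sufficiency direction of Theorem \ref{thm:dfw-logh}. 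The trade-off is that your scheme needs only $n$ induction steps and makes the Poincar\'e-lemma mechanism explicit, whereas the paper's subset induction keeps each step to a single application of one identity at the price of checking its coherence over all index pairs up front; both yield complete proofs.
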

\begin{proof}
  Fix a $\web$-adapted coordinate system $(x_1,\ldots,x_m)$ centered at $p$.
  In these coordinates $F_i=\smset{x\in\Rb{m} : \forall_{k\not\in\pi_i} \
  x_k=0}$, where $\pi_i=\smset{\csep_i+1,\ldots,\csep_{i+1}}$ with
  $\csep_i=\sum_{k=1}^{i-1}\codim\fol_k$ for $i=1,\ldots,n$. Write
  $\Omega_{0,i} = h_i(x_{\csep_i\!+1},\ldots,x_{\csep_{i+1}})\
  dx_1\wedge\cdots\wedge dx_m$ for some function-germs $h_i\in
  C^\infty(F_i)$ at $0$, and let $\tilde{h}\in C(\bigcup_{i=1}^n F_i)$ be the
  function-germ defined by $\tilde{h}(x) = h_i(x_{\csep_i\!+1},\ldots,
  x_{\csep_{i+1}})$ whenever $x\in F_i$. This function is well defined since
  $h_i(0)=h_j(0)$ for $i\neq j$ by assumption.
  
  Our goal, expressed using the notation introduced in $(\ref{eq:dfw-sim})$, is
  to produce a smooth extension $h\in C^\infty(U)$ of $\tilde{h}$ to a
  neighbourhood $U$ of $0$ such that $\smash{\frac{\partial^2 \log h}{\partial
  x_j \partial x_k} = A_{jk}}$ for each pair of indices $j\not\sim k$, and to
  show that it is unique; taking $\Omega = h(x)\,dx_1\wedge\cdots\wedge dx_m$
  will end the proof. The above system of partial differential equations is
  equivalent to its integral counterpart
  \begin{equation}
    \label{eq:dfw-normal-recovery-int}
    \begin{aligned}
      h(x)
        &= \left(\frac{
          h(x_1,\ldots,x_{j-1},0,x_{j+1},\ldots,x_m)
          \,h(x_1,\ldots,x_{k-1},0,x_{k+1},\ldots,x_m)
        }{
          h(x_1,\ldots,x_{j-1},0,x_{j+1},\ldots,x_{k-1},0,x_{k+1},\ldots,x_m)
        }\right)\\
        &\hskip2em\cdot\exp\left(
          \int_0^{x_{j}}\!\!\!\int_0^{x_k}
          \,A_{jk}(x_1,\ldots,t_j,\ldots,t_k,\ldots,x_m)
          \ dt_kdt_j
        \right)\quad\text{for each }j\not\sim k.
    \end{aligned}
  \end{equation}
  Using these equations we can express the value of $h$ at $x=(x_1,\ldots,x_m)$
  in terms of components of $A$ and values of $h$ at points with strictly
  smaller number of nonzero coordinates. While there are several such
  expressions, one for each pair of indices $j\not\sim k$, they are in fact
  equal to each other. To prove this, denote the respective right-hand sides of
  $(\ref{eq:dfw-normal-recovery-int})$ by $\rho_{jk}$ and rewrite the double
  integral of $A_{jk}$ as
  \begin{equation}
    \label{eq:dfw-normal-recovery-compat}
    \begin{aligned}
      &\int_0^{x_{j}}\!\!\!\int_0^{x_k}\left(\int_0^{x_l}
        \,\basis{x_l}A_{jk}(x_1,\ldots,t_j,\ldots,t_k,\ldots,t_l,\ldots,x_m)
        \ dt_l\right)\,dt_kdt_j \\
      &\hskip4em + \int_0^{x_{j}}\!\!\!\int_0^{x_k}
          \,A_{jk}(x_1,\ldots,t_j,\ldots,t_k,\ldots,0,\ldots,x_m)
          \ dt_kdt_j
    \end{aligned}
  \end{equation}
  for some index $l\not\sim k$. Then, expand the second summand using
  $(\ref{eq:dfw-normal-recovery-int})$ for indices $j,k$ at
  $(x_1,\ldots,x_{l-1},0,x_{l+1},\ldots,x_m)$ and insert the
  outcome back into $(\ref{eq:dfw-normal-recovery-int})$. By property
  $(\ref{thm:dfw-normal-recovery:a:compat})$ of the tensor field $A$ and
  Fubini's theorem, the expression for $h(x)$ obtained in this way is symmetric
  with respect to indices $j,l$, hence is equal to both
  $\rho_{jk}$ and $\rho_{lk}$. Since $\rho_{jk}=\rho_{kj}$, the right-hand
  side of $(\ref{eq:dfw-normal-recovery-int})$ does not depend on the choice of
  the indices $j,k$.

  This lets us prove by induction that the desired extension of $\tilde{h}$
  exists, is well-defined and unique. One way to set up the induction is to
  consider subsets of the set of indices $I\subseteq [m]$ and
  linear subspaces $F_I$ of points $x=(x_1,\ldots,x_m)$ satisfying $x_k=0$ for
  $k\not\in I$. Denote by $\pi_i$ the partition of $[m]$ given by
  $(\ref{eq:dfw-part})$. The base case corresponds to the fact that the
  function $h$ is defined on each $F_I$ for $I\subseteq\pi_i$ with
  $i=1,2,\ldots,n$ and is equal to $\tilde{h}$. Now, equality
  $(\ref{eq:dfw-normal-recovery-int})$ allows us to extend $h$ from $F_{I'}\cup
  F_{I''}$ to $F_I$ in a smooth and unique way if $I = I'\cup\smset{k} =
  I''\cup\smset{l}$ for some $k\not\sim l$. If we consider any $I$ not covered
  by the base case such that $h$ extends smoothly and uniquely to each $F_J$
  for $J\subsetneq I$, then $I\cap\pi_i\neq\varnothing$ and
  $I\cap\pi_j\neq\varnothing$ for some $i\neq j$; hence we can take
  $I'=I\setminus\smset{k}$ and $I''=I\setminus\smset{l}$ for some $k\in
  I\cap\pi_i$ and $l\in I\cap\pi_j$ to extend $h$ to $F_I$ using
  $(\ref{eq:dfw-normal-recovery-int})$, finishing the induction step and ending
  the proof.
\end{proof}

\begin{cor}
  \label{thm:dfw-normal-invariants}
  A divergence-free $n$-web-germ $\web_\Omega=(M,\fol_1,\ldots,\fol_n,\Omega)$
  at $p\in M$ is uniquely determined by its nonuniformity tensor-germ
  $\kcurv(\web_\Omega)$ at $p$ and values of $\Omega$ along the union
  $\bigcup_{k=1}^n F_k$ of the leaf-germs $F_k\in\fol_k^c$ passing
  through $p$.
\end{cor}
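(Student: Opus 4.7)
The plan is to recognize the corollary as a direct application of the uniqueness assertion of Theorem \ref{thm:dfw-normal-recovery}. Since the foliations $\fol_1,\ldots,\fol_n$ are part of the specified data defining the web-germ, the only freedom left is the choice of volume form, so producing $\Omega$ uniquely from $\kcurv(\web_\Omega)$ together with the leaf-restrictions $\iota_k^*\Omega$ is exactly what that theorem delivers --- provided its hypotheses are met by these inputs.

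First I would set $A := \kcurv(\web_\Omega)$ and $\Omega_{0,k} := \iota_k^*\Omega$ and verify hypothesis $(\ref{thm:dfw-normal-recovery:cross})$: each $\Omega_{0,k}$ is nonvanishing because $\Omega$ is, and the agreement $(\Omega_{0,i})_{|p}=(\Omega_{0,j})_{|p}$ is automatic since they are pointwise restrictions of the same form. For hypothesis $(\ref{thm:dfw-normal-recovery:a})$, properties $(\ref{thm:dfw-normal-recovery:a:sym})$ and $(\ref{thm:dfw-normal-recovery:a:diag})$ are visible from Definition \ref{def:dfw-k}: the defining expression uses the symmetric product and ranges only over pairs $(k,l)$ with $k\not\sim l$, which is precisely the block-off-diagonal constraint phrased in terms of the complementary distributions $T\fol_i^c$.

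The only genuinely interesting step, and the step I expect to be the main (though still mild) obstacle, is checking the compatibility condition $(\ref{thm:dfw-normal-recovery:a:compat})$ for the nonuniformity tensor. I would work in a $\web_\Omega$-adapted chart with $\Omega = h(x)\,dx_1\wedge\cdots\wedge dx_m$ and evaluate on coordinate vector fields $\basis{x_a}$, $\basis{x_b}$, $\basis{x_c}$ with $a\in\pi_i$, $b\in\pi_j$, $c\in\pi_k$ for indices with $i\neq k$ and $j\neq k$. Both sides then reduce to the mixed third derivative $\frac{\partial^3 \log h}{\partial x_a\,\partial x_b\,\partial x_c}$, and the identity follows from the equality of mixed partials; extending to arbitrary sections of the specified distributions is a bookkeeping matter, since the Leibniz-rule terms generated on the two sides match thanks to the off-diagonal support of $A$.

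With the hypotheses verified, Theorem \ref{thm:dfw-normal-recovery} produces a unique volume form-germ $\Omega'$ such that $\kcurv(\web_{\Omega'})=A=\kcurv(\web_\Omega)$ and $\iota_k^*\Omega'=\Omega_{0,k}=\iota_k^*\Omega$ for each $k$. Since $\Omega$ itself satisfies these constraints, $\Omega'=\Omega$, so any divergence-free web-germ sharing the foliations, the nonuniformity tensor, and the leaf-restrictions of $\Omega$ must coincide with $\web_\Omega$, which is the desired uniqueness statement.
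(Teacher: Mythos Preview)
Your proposal is correct and follows essentially the same approach as the paper: set $A=\kcurv(\web_\Omega)$, take the leaf-restrictions of $\Omega$ as the initial data, verify the hypotheses of Theorem~\ref{thm:dfw-normal-recovery}, and invoke its uniqueness clause. The paper's own proof is in fact terser than yours --- it simply asserts that the hypotheses in $(\ref{thm:dfw-normal-recovery:a})$ hold and appeals to uniqueness --- so your explicit check of $(\ref{thm:dfw-normal-recovery:a:compat})$ on coordinate vector fields is a welcome expansion.

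One minor caution: your remark that the extension to arbitrary sections is ``a bookkeeping matter'' because the Leibniz terms match is not quite right --- the identity $X\,A(Y,Z)=Y\,A(X,Z)$ is not tensorial in $X,Y$, and the extra terms do not cancel in general. However, this does not damage the argument: the proof of Theorem~\ref{thm:dfw-normal-recovery} only ever uses condition $(\ref{thm:dfw-normal-recovery:a:compat})$ for coordinate vector fields (to get $\partial_{x_l}A_{jk}=\partial_{x_j}A_{lk}$ in the Fubini step), so your verification at that level is exactly what is required. The paper's own statement of the condition is equally loose on this point.
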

\begin{proof}
  The nonuniformity tensor $A:=\kcurv(\web_\Omega)$ satisfies the conditions
  listed in $(\ref{thm:dfw-normal-recovery:a})$ of Theorem
  \ref{thm:dfw-normal-recovery}. The volume form $\Omega$ is a valid extension
  of the forms $\Omega\circ\iota_i$ along the inclusions
  $\iota_i:F_i\hookrightarrow\Rb{d}$. Moreover, $\kcurv(\web_\Omega)=A$ holds,
  hence the claim follows from uniqueness in Theorem
  \ref{thm:dfw-normal-recovery}.
\end{proof}

While the set of compatible tuples of volume form-germs along the leaves
crossing $p$ is quite large, there is a way to put each such initial condition
into a trivial normal form with an appropriate choice of a coordinate system.

\begin{lem}
  \label{thm:dfw-normal-coords}
  Let $\web_\Omega=(M,\fol_1,\ldots,\fol_n,\Omega)$ be a germ at $p\in M$ of a
  divergence-free $n$-web, and let $F_i$ be the leaf-germ of $\fol_i^c$
  crossing $p$, where $\fol_i^c$ is given by $T\fol_i^c=\bigcap_{j\neq i}
  T\fol_j$, for $i=1,2,\ldots,n$.
  \begin{enumerate}[label=$(\arabic{enumi})$, ref=\arabic{enumi}]
    \item \label{thm:dfw-normal-coords:exists}
      There exists a $\web_\Omega$-adapted coordinate system $(x_1,\ldots,x_m)$
      in which $\Omega_{|q} = \Lambda_{|q}$ for all $q\in\bigcup_{i=1}^n F_i$,
      where $\Lambda=dx_1\wedge\cdots\wedge dx_m$ is the unit volume form.
    \item \label{thm:dfw-normal-coords:transitions}
      Let $\csep_i=\sum_{k=1}^{i-1}\codim\fol_k$ for $i=1,2,\ldots,n+1$. Any two
      such coordinate systems $(x_1,\ldots,x_m)$ and $(y_1,\ldots,y_m)$ differ
      by a~change of variables of the form
      \begin{equation}
        (y_{\csep_i\!+1},\ldots,y_{\csep_{i+1}})
          = \mathbf{y}_i
            \mapsto
        \mathbf{x}_i(\mathbf{y}_i)
          = (x_{\csep_i\!+1}(\mathbf{y}_i),\ldots,
            x_{\csep_{i+1}}(\mathbf{y}_i))
      \end{equation}
      with constant determinant
      $\det\frac{\partial\mathbf{x}_i}{\partial\mathbf{y_i}}(\mathbf{y}_i)$
      followed by a~permutation of coordinates $\mathbf{x}_i\mapsto
      \mathbf{x}_{\sigma(i)}$ for some $\sigma\in S^n$ satisfying
      $\codim\fol_{\sigma(k)} = \codim\fol_k$, such that
      $\operatorname{sgn}\sigma|_{J_o} \cdot\prod_{i=1}^n
      \det\frac{\partial\mathbf{x}_i}{\partial\mathbf{y_i}}(\mathbf{y}_i)=1$,
      where by $J_o$ we denote the set of indices $j=1,\ldots,n$ for which
      $\codim\fol_j$ is odd.
  \end{enumerate}
\end{lem}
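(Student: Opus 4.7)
My proof plan proceeds as follows. For part $(1)$, I would begin with any $\web_\Omega$-adapted coordinate chart $(x_1,\ldots,x_m)$ centered at $p$ and write $\Omega=h(x)\,dx_1\wedge\cdots\wedge dx_m$ with $h>0$. Restricted to $F_i$, the density $h$ reduces to a function $h_i(\mathbf{x}_i)$ of $\mathbf{x}_i=(x_{\csep_i+1},\ldots,x_{\csep_{i+1}})$ alone, and all such restrictions agree with $h(0)$ at the origin. After fixing any factorization $h(0)=d_1d_2\cdots d_n$ into positive constants, I would construct, for each $i$, a diffeomorphism-germ $\mathbf{y}_i(\mathbf{x}_i)$ whose Jacobian determinant equals $h_i(\mathbf{x}_i)\,d_i/h(0)$, using the same ``integrate along the last coordinate'' device employed in the proof of Theorem~\ref{thm:dfw-logh}. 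Assembling these into a block-diagonal transformation gives a new $\web_\Omega$-adapted chart in which the density of $\Omega$ equals $1$ along every $F_i$, as a short direct calculation verifies.

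For part $(2)$, let $(x_1,\ldots,x_m)$ and $(y_1,\ldots,y_m)$ be two such charts. Both being $\web_\Omega$-adapted forces the transition between them to preserve the partition of $[m]$ into blocks, so modulo a codimension-preserving permutation $\sigma$ of the blocks it takes the block-diagonal form $\mathbf{x}_i=\mathbf{x}_i(\mathbf{y}_i)$ from the statement. Pulling back $\Omega$ then yields
\begin{equation}
  h_y(y)=h_x(x(y))\cdot\varepsilon(\sigma)\cdot\prod_{i=1}^n\det\frac{\partial\mathbf{x}_i}{\partial\mathbf{y}_i}(\mathbf{y}_i),
\end{equation}
where $\varepsilon(\sigma)=\pm 1$ accounts for the reshuffling of blocks inside $dx_1\wedge\cdots\wedge dx_m$. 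Restricting this identity to $F_i^y$ (where $\mathbf{y}_j=0$ for $j\neq i$) freezes every Jacobian with $j\neq i$ to a constant; since $h_x=h_y=1$ on the leaves, it follows that $\det\frac{\partial\mathbf{x}_i}{\partial\mathbf{y}_i}$ must be constant in $\mathbf{y}_i$. Aggregating these constancies over all $i$ then gives the required normalization $\varepsilon(\sigma)\cdot\prod_i\det\frac{\partial\mathbf{x}_i}{\partial\mathbf{y}_i}=1$.

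The main technical point is identifying the sign $\varepsilon(\sigma)$ with $\operatorname{sgn}(\sigma|_{J_o})$. Transposing two adjacent blocks of sizes $c_i,c_j$ contributes the sign $(-1)^{c_ic_j}$, which is nontrivial only when both sizes are odd. Accumulating contributions over all inversions of $\sigma$ shows $\varepsilon(\sigma)=(-1)^N$, where $N$ counts inversions $(i,j)$ of $\sigma$ with both $c_i,c_j$ odd; this is precisely the number of inversions of $\sigma|_{J_o}$, whence $\varepsilon(\sigma)=\operatorname{sgn}(\sigma|_{J_o})$ as stated.
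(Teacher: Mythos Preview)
Your proposal is correct and follows essentially the same approach as the paper's proof: both construct the normalized chart in part~(1) by integrating the density along one coordinate within each block, and both establish part~(2) by restricting the pullback identity to the leaves $F_i$ to force constancy of the block Jacobians. The only cosmetic differences are that you absorb the final rescaling into an explicit factorization $h(0)=d_1\cdots d_n$, and you compute the sign $\varepsilon(\sigma)$ via adjacent block transpositions and inversions rather than via same-size transpositions as the paper does; both combinatorial routes yield $\operatorname{sgn}(\sigma|_{J_o})$.
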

\begin{proof}
  Pick a $\web_\Omega$-adapted coordinate system $(y_1,\ldots,y_m)$ centered at $p$,
  then express $\Omega$ as $h(y)\, dy_1\wedge\cdots\wedge dy_m$ for some $h\in
  C^\infty(M)$ and make the substitution
  \begin{equation}
    x_{\csep_i\!+1} = \frac{1}{h(0)}\int_0^{y_{\csep_i\!+1}}
      h(0,\ldots,0,t,y_{\csep_i\!+2},\ldots,y_{\csep_{i+1}},0,\ldots,0)\,dt,
  \end{equation}
  for each $i=1,\ldots,n$, while keeping the other coordinates unaltered. If we
  name this transformation $\varphi$, then via direct calculation we obtain for
  each $i=1,\ldots,n$ and $q\in F_i$ the equality
  $\varphi^*(h(0)\,dx_1\wedge\cdots\wedge dx_m)_{|q} = \Omega_{|q}$. Further
  linear rescaling of coordinates yields the desired coordinate system.

  Any transition map $y = \varphi(x)$ between two such systems
  $(y_1,\ldots,y_m)$ and $(x_1,\ldots,x_m)$ is an equivalence of the underlying
  regular web, and as such it takes the form $\mathbf{y}_i =
  \varphi_i(\mathbf{x}_{\sigma(i)})$ for $i=1,\ldots,n$ and some permutation
  $\sigma\in S_n$ satisfying $\codim\fol_{\sigma(i)}=\codim\fol_i$ for each
  $i=1,\ldots,n$. At each point $q\in F_i$ we have
  \begin{equation}
    \label{eq:dfw-normal-coords:det}
    \begin{aligned}
      (\Omega_{0,i})_{|q} 
        &= dx_1\wedge\cdots\wedge dx_m = dy_1\wedge\cdots\wedge dy_m \\
        &= \pm \left(\prod_{j\neq i}
          \det\frac{\partial\varphi_{\sigma^{-1}(j)}}{\partial\mathbf{x}_j}(0)
        \right)\cdot
          \det\frac{\partial\varphi_{\sigma^{-1}(i)}}{\partial\mathbf{x}_i}
            (q_{\csep_i\!+1},\ldots,q_{\csep_{i+1}})
          \ dx_1\wedge\cdots\wedge dx_m.
    \end{aligned}
  \end{equation}
  This equality forces $\det\frac{\partial \varphi_i}{\partial
  \mathbf{x}_{\sigma(i)}}$ to be constant for each $i=1,\ldots,n$. Evaluating
  it at $0$ we obtain that
  \begin{equation}
    \pm \prod_{i=1}^n
      \det\frac{\partial\varphi_{\sigma^{-1}(j)}}{\partial\mathbf{x}_j} = 1,
  \end{equation}
  where the correct sign is found by noting that if we write $\sigma$ as a
  product of transpositions
  $\mathbf{x}_i\overset{\tau}{\longleftrightarrow}\mathbf{x}_j$ satisfying
  $\codim\fol_i=\codim\fol_j=c$ for some $c\in\mathbb{N}$, then each $\tau$
  corresponds to $c$ transpositions of individual variables $x_k$, contributing
  a factor $(-1)^c$ to $\det d\varphi$.
\end{proof}

The above normalization of the coordinate system is a generalization of
a~theorem of Tabachnikov \cite{2-webs} on normal forms of the volume elements
associated with divergence-free $2$-webs. Pick an arbitrary divergence-free
$n$-web $\web_\Omega$ and a coordinate system $(x_1,\ldots,x_m)$ centered at
$p\in M$ normalized by means of Lemma \ref{thm:dfw-normal-coords} with respect
to $\Omega$. Let $\mathfrak{m}=\langle x_1,x_2,\ldots,x_m\rangle$ be the
maximal ideal of the ring of smooth functions-germs at the point $p$. In these
coordinates, the density $h\in C^\infty(M,p)$ of the volume form-germ $\Omega =
h(x)\, dx_1\wedge\cdots\wedge dx_m$ can be expanded into
\begin{equation}
  h(x) = 1
    + \textstyle{\sum_{i<j,\,\csep_i<k\leq \csep_{i+1}\!,
      \,\csep_j<l\leq \csep_{j+1}}} \kappa_{kl}x_kx_l + f(x),
\end{equation}
where $\csep_i = \sum_{k=1}^{i-1}\codim\fol_k$ and $\kappa_{kl}$ are exactly
the coefficients at $p$ of the nonuniformity tensor $\kcurv(\web_\Omega)_{|p} =
\smsum{i,j}\kappa_{ij}\,dx_i\,dx_j$, while the function-germ
$f\in\mathfrak{m}^3$ vanishes on the union of leaves passing through the origin
$F_i\in\fol_i^c$ of the complementary foliations $\fol_i^c$ generated by the
tangent distribution $T\fol_i^c=\bigcap_{j\neq i}T\fol_j$. When the web
consists of two foliations, the displayed equality reduces exactly to
\cite[Theorem 0.2, (ii)]{2-webs} when taken modulo $\mathfrak{m}^3$.

For a fixed regular $n$-web $\web$, denote by $\mathfrak{G}_\web$ the group of
all coordinate transformation-germs $\maps{\varphi}{(\Rb{m},0)}{(\Rb{m},0)};
(y_1,\ldots,y_m)\mapsto (x_1,\ldots,x_m)$ preserving the web $\web$ and
satisfying $(\varphi^*\Lambda)_{|q} = \Lambda_{|q}$ for the unit volume
form $\Lambda = dx_1\wedge dx_2\wedge\cdots\wedge dx_m$ and
each $q\in\bigcup_{i=1}^n F_i$ with $0\in F_i\in\fol_i^c$, as in Lemma
\ref{thm:dfw-normal-coords}. The reduction of the group of all $\web$-adapted
coordinate transformations to $\mathfrak{G}_\web$, made possible by Lemma
\ref{thm:dfw-normal-coords}, allows us to reformulate the classification
problem of divergence-free $n$-webs as follows.

\begin{thm}
  Divergence-free $n$-web-germs with an underlying regular $m$-dimensional
  $n$-web $\web$ are classified up to equivalence by orbits of the action of
  $\mathfrak{G}_{\web}$ by pullback on germs at $0$ of covariant symmetric
  $2$-tensor fields $A$ on $\Rb{m}$ satisfying the identities of nonuniformity
  tensors listed in Theorem \ref{thm:dfw-normal-recovery} via the
  correspondence $\web_\Omega \mapsto \mathfrak{G}_\web (\psi^{-1})^*
  \kcurv(\web_\Omega)$, where $\psi(x)=(x_1,\ldots,x_m)$ is any
  $\web_\Omega$-adapted coordinate system normalized by means of Lemma
  \ref{thm:dfw-normal-coords}.
\end{thm}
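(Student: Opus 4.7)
The plan is to verify that the map $\web_\Omega \mapsto \mathfrak{G}_\web (\psi^{-1})^* \kcurv(\web_\Omega)$ is well defined on equivalence classes of divergence-free webs with underlying regular web $\web$ and that it realizes a bijection with the $\mathfrak{G}_\web$-orbits. Both directions will reduce to the reconstruction Theorem~\ref{thm:dfw-normal-recovery}, combined with the transition-map characterization in Lemma~\ref{thm:dfw-normal-coords}$(\ref{thm:dfw-normal-coords:transitions})$.

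Well-definedness of the orbit assigned to $\web_\Omega$ follows because if $\psi_1,\psi_2$ are two normalized charts for $\web_\Omega$, the transition $\psi_2\circ\psi_1^{-1}$ lies in $\mathfrak{G}_\web$ by the cited lemma, so the tensor-germs $(\psi_i^{-1})^*\kcurv(\web_\Omega)$ are pullbacks of each other under an element of $\mathfrak{G}_\web$. For invariance under equivalence, if $\varphi:\web_\Omega\to\web_{\tilde\Omega}$ is a divergence-free web equivalence and $\psi$ normalizes $\web_\Omega$, then $\tilde\psi:=\psi\circ\varphi^{-1}$ yields the same coordinate expression for $\tilde\Omega$ as $\psi$ does for $\Omega$, hence normalizes $\web_{\tilde\Omega}$; naturality of $\kcurv$ then gives $(\psi^{-1})^*\kcurv(\web_\Omega)=(\tilde\psi^{-1})^*\kcurv(\web_{\tilde\Omega})$, so the assigned orbits agree.

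For injectivity, suppose the two orbits coincide and pick $\varphi_0\in\mathfrak{G}_\web$ with $(\psi^{-1})^*\kcurv(\web_\Omega)=\varphi_0^*(\tilde\psi^{-1})^*\kcurv(\web_{\tilde\Omega})$. I would set $\varphi:=\tilde\psi^{-1}\circ\varphi_0\circ\psi$, which is automatically an equivalence of the underlying regular webs, and check that it carries $\tilde\Omega$ to $\Omega$. Pulling everything back via $\psi$, the volume form-germs $(\psi^{-1})^*\Omega$ and $(\psi^{-1})^*\varphi^*\tilde\Omega$ on $(\Rb{m},0)$ both equal $\Lambda$ pointwise along $\bigcup F_i$ --- the first because $\psi$ normalizes $\web_\Omega$, the second because $\varphi_0\in\mathfrak{G}_\web$ permutes the leaves $F_i$ among themselves while preserving $\Lambda$ along their union --- and share the same nonuniformity tensor by naturality of $\kcurv$. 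The uniqueness clause in Theorem~\ref{thm:dfw-normal-recovery} then forces $\varphi^*\tilde\Omega=\Omega$, so $\varphi$ is an equivalence of divergence-free webs. Surjectivity onto orbits follows at once from the existence clause of the same theorem: any tensor germ $A$ with the listed identities is realized as $\kcurv(\web_\Omega)$ for the divergence-free web on $(\Rb{m},0)$ built from compatible initial data $\Lambda|_{F_i}$, and the identity chart already normalizes this web.

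The principal obstacle is ensuring applicability of the uniqueness clause in Theorem~\ref{thm:dfw-normal-recovery} during the injectivity step: I need $(\psi^{-1})^*\Omega$ and $(\psi^{-1})^*\varphi^*\tilde\Omega$ to agree along the entire union $\bigcup F_i$, not merely at the base point, which is exactly what the constant-determinant condition in Lemma~\ref{thm:dfw-normal-coords}$(\ref{thm:dfw-normal-coords:transitions})$ builds into the definition of $\mathfrak{G}_\web$. Once this delicate matching of initial data is in place, the remainder is routine bookkeeping around naturality of $\kcurv$ and the reconstruction theorem.
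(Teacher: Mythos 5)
Your proposal is correct and follows essentially the same route as the paper: both directions reduce to the uniqueness and existence clauses of Theorem~\ref{thm:dfw-normal-recovery} together with the characterization of transitions between normalized charts from Lemma~\ref{thm:dfw-normal-coords}. The paper states this very tersely; your write-up merely fills in the bookkeeping (well-definedness on orbits, the conjugation $\varphi=\tilde\psi^{-1}\circ\varphi_0\circ\psi$, and the matching of initial data along $\bigcup F_i$), all of which is sound.
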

\begin{proof}
  By the uniqueness claim of Theorem \ref{thm:dfw-normal-recovery}, an
  equivalence between two divergence-free web-germs $\web_{\tilde\Omega}$ and
  $\web_\Omega$ is the same as a change of variables
  $\varphi\in\mathfrak{G}_\web$ between the corresponding normalized coordinate
  systems $\tilde\psi$, $\psi$ which satisfies
  $\varphi^*(\psi^{-1})^*\kcurv(\web_\Omega) = (\tilde\psi^{-1})^*
  \kcurv(\web_{\tilde\Omega})$. Since, again by Theorem
  \ref{thm:dfw-normal-recovery}, any tensor field-germ $A$ of the form given
  above is a nonuniformity tensor of a divergence-free $n$-web-germ
  $\web_\Omega$ in some normalized $\web_\Omega$-adapted coordinates, the two
  classification problems are equivalent.
\end{proof}

Note that, in contrast to webs of higher codimension, the group
$\mathfrak{G}_\web$ is finite-dimensional for codimension-$1$ webs $\web$.
In this case, each map $\varphi\in\mathfrak{G}_\web$ has to be linear, since
the requirement $\det\frac{\partial \varphi_i}{\partial x_{\sigma(i)}}
=\operatorname{const.}$ for each $i=1,\ldots,m$ enforced during a change of
variables $\varphi(x_1,\ldots,x_m) = (\varphi_1(x_{\sigma(1)}),\ldots,
\varphi_m(x_{\sigma(m)}))$, $\sigma\in S_m$, between any two coordinate
systems normalized in the sense of Lemma \ref{thm:dfw-normal-coords}
reduces to the constancy of $d\varphi$. This opens up a possibility to classify
some generic codimension-$1$ divergence-free $n$-webs using elementary tools.

\begin{exmp}
  \label{ex:dfw-normal-2c1}
  Our goal is to determine a canonical form of a generic divergence-free
  $2$-web's volume element $\Omega$. Consider a germ at $0$ of a planar
  divergence-free $2$-web $(\Rb{2},\fol, \gol, \Omega)$ with $\Omega =
  h(x,y)\,dx\wedge dy$, $T\fol=\ker dx$ and $T\gol=\ker dy$. By Lemma
  \ref{thm:dfw-normal-coords}, we can choose the coordinate system $(x,y)$ so
  that $h(x,0)=h(0,y)=1$. Moreover, the Lemma states that any other such
  coordinate system $(\tilde{x},\tilde{y})$ differs from $(x,y)$ by a linear
  change of variables of the form
  \begin{equation}
    (\tilde{x},\tilde{y}) = (cx,c^{-1}y)\quad\text{or}
      \quad(\tilde{x},\tilde{y})=(cy,-c^{-1}x)
      \quad\text{for }c\in\mathbb{R}\setminus\set{0}.
  \end{equation}
  Since the nonuniformity tensor $\kcurv(\web_\Omega) = \kappa(x,y)\,dx\,dy =
  \frac{\partial\log h}{\partial x\,\partial y}\,dx\,dy$ is itself a
  divergence-free web-invariant, its covariant derivative with respect to the
  natural connection $\Theta$ with covariant derivative $\nabla$ is also an
  invariant. It is equal to $\nabla\kcurv(\web_\Omega)=(d\kappa - \kappa\,d\log
  h) \otimes (dx\,dy)$; hence its value at the origin is $d\kappa\otimes
  (dx\,dy)$, since $h$ is constant along the leaves $\set{x=0}$ and
  $\set{y=0}$.

  Suppose now that the genericity condition $\frac{\partial \kappa}{\partial
  x}(0) \neq 0, \frac{\partial \kappa}{\partial y}(0) \neq 0$ is satisfied.
  Using $\pi/2$-rotations and transformations of the form
  $(x,y)\mapsto(cx,\frac{1}{c}y)$ for $c>0$ we put $\nabla\kcurv(\web_\Omega)$
  into a form which satisfies $\frac{\partial \kappa}{\partial x}(0) =
  \frac{\partial \kappa}{\partial y}(0) > 0$. Now, the canonical form of the
  volume density function $h$ can be computed using Hadamard's lemma. It is
  equal to
  \begin{equation}
    \label{eq:dfw-normal-2c1}
      h(x,y) = 1 + xy(\kappa_0 + \smfrac{1}{2}a\cdot(x+y)
         + x^2\,\tilde{g}_x(x) + xy\,\tilde{g}_{xy}(x,y) +
        y^2\,\tilde{g}_y(y)\,)
  \end{equation}
  for some function-germs $\tilde{g}_x,\tilde{g}_y\in C^\infty(\Rb{1},0)$,
  $\tilde{g}_{xy}\in C^\infty(\Rb{2},0)$ and real constants
  $\kappa_0\in\Rb{1}$, $a\in\Rb{1}_+$. The two scalar invariants correspond to
  \begin{equation}
    \kappa_0 = \frac{\kappa(0)}{h(0)},\quad
    a = \abs*{\frac{
      (h(0)\smfrac{\partial{\kappa}}{\partial{x}}(0)
        - \smfrac{\partial{h}}{\partial{x}}(0)\kappa(0))
      \cdot(h(0)\smfrac{\partial{\kappa}}{\partial{y}}(0)
        - \smfrac{\partial{h}}{\partial{y}}(0)\kappa(0))}{h(0)^{5}}
      }^{1/2}
  \end{equation}
  in any
  $\web_\Omega$-adapted coordinate system $(x,y)$. Two divergence-free
  $2$-web-germs satisfying the genericity condition are locally equivalent if
  and only if their canonical forms coincide.
\end{exmp}

\section{Volume-preserving holonomy and geometric invariants}

Let $(M,\fol_1,\ldots,\fol_n,\Omega)$ be a codimension-$1$ divergence-free
$n$-web on a smooth manifold $M$. Here, we give a certain geometrical
interpretation of the nonuniformity tensor $\kcurv(\web_\Omega)$ inspired by
the works of Blaschke, Thomsen and Bol on planar $3$-webs \cite{gewebe,
pereira} and based upon the interpretation of curvature of bi-Lagrangian
structures given by Tabachnikov in dimension $2$ \cite{2-webs}. In the latter
work, the curvature of the bi-Lagrangian connection $\nabla$ at the point $p$
was approximated by certain ratios between adjacent volumes enclosed by the
leaves of the web crossing $p$ (Figure \ref{fig:dfw-taba}). Our approach goes
along similar lines.

Let $M$ be a smooth manifold and let $\web$ be a regular codimension-$1$
$n$-web on $M$. For each $x,y\in\Rb{1}$ let us treat $\ival{x,y}$ as an
ordinary closed interval $[\min(x,y), \max(x,y)]$ oriented positively if
$x\leq y$ and negatively if $x>y$. Now, given any pair of vectors
$a,b\in\Rb{m}$, we let $\ival{a,b} = \prod_{k=1}^m \ival{a_k,b_k}$ in the sense
given above with the product orientation.
\begin{defn}
  \label{def:dfw-region}
    Let $p,q\in M$. A compact set $C\subseteq M$ which takes the form
    $\ival{\varphi(p),\varphi(q)}$ for some $\web$-adapted chart $(U,\varphi)$
    will be called a \emph{region bounded by the leaves of $\web$ crossing $p$
    and $q$}. When the chart $(U,\varphi)$ is implied, we will omit $\varphi$
    and denote $C$ by $\ival{p,q}$ for convenience.
\end{defn}

Every region bounded by leaves $C$ has finite volume with respect to the volume
form $\Omega$ (henceforth abbreviated by $\Vol{\Omega}(C)$). This
$\Omega$-volume, defined as
\begin{equation}
  \Vol{\Omega}(C) = \abs*{\,\int_C \Omega\,},
\end{equation}
is an essential ingredient in the construction of the \emph{reflection holonomy
group} of a divergence-free web. The reflection holonomy group shares many
similarities with the holonomy group of a planar $3$-web, which measures the
degree of its \emph{non-hexagonality} at a given point $p\in\Rb{2}$
\cite{gewebe, pereira} or, in other words, the extent to which planar,
curvilinear figures formed by the leaves of $\web$ with vertices lying on the
leaves crossing $p$ are not closed (see Figure \ref{fig:dfw-rhol}). The $3$-web
is hexagonal precisely when a certain natural connection associated with it is
flat.

\begin{figure}
  \centering
  \def\svgwidth{\textwidth}
  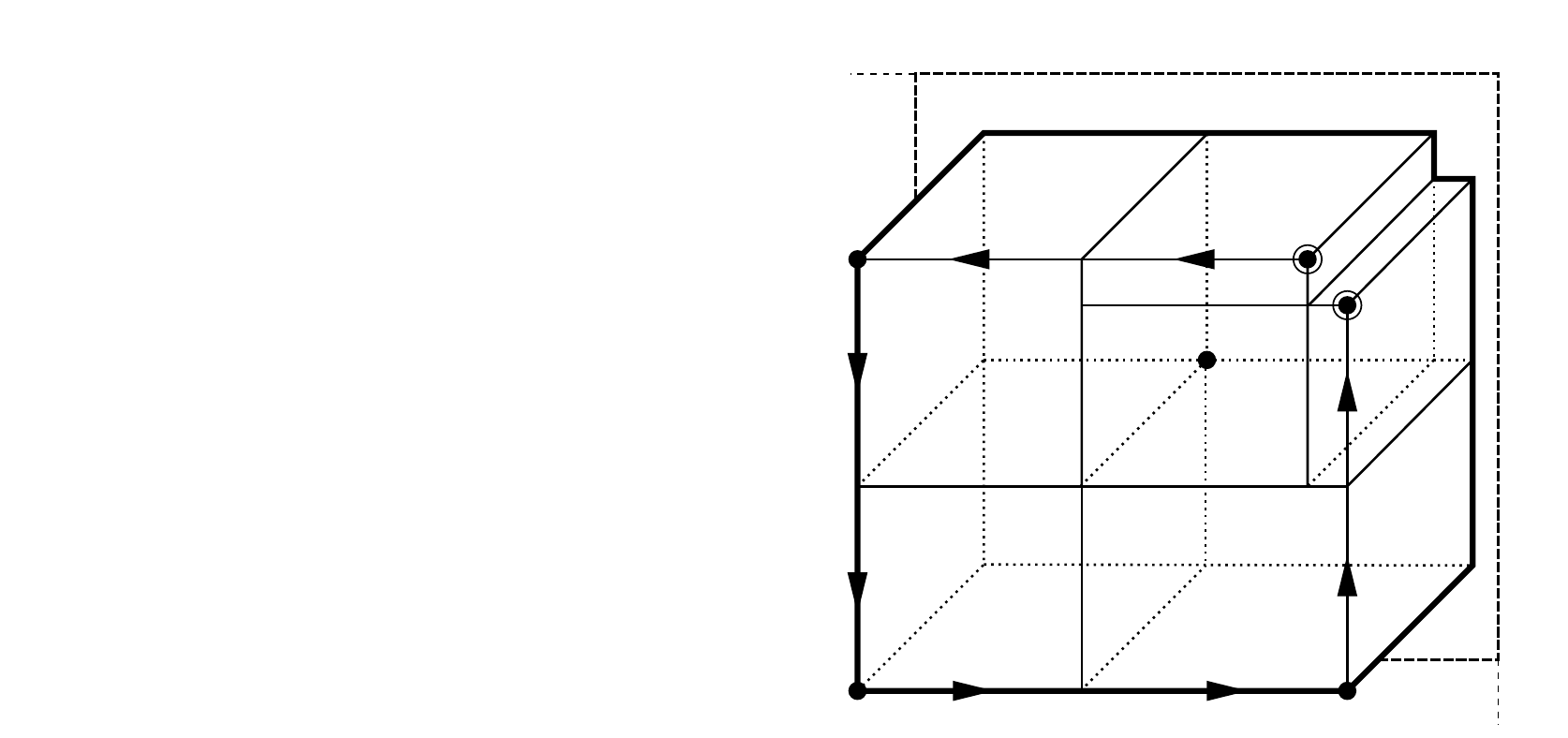
  \caption{\footnotesize Left: planar $3$-web holonomy. Right:
  volume-preserving holonomy of codimension-$1$ $3$-web. The volumes of
  adjacent cubes are the same.}
  \label{fig:dfw-rhol}
\end{figure}

The property of hexagonality is expressed in the nontriviality of the group
generated by the smooth map-germ $\maps{\ell}{(F,p)}{(F,p)}$ taking one end of
each such figure's perimeter to the other. This transformation is itself a
$6$-fold composition of map-germs $\maps{s_{ij}}{(F_i,p)}{(F_j,p)}$, each
transporting the points on $F_i\in\fol_i$ to the points on $F_j\in\fol_j$ along
the leaves of $\fol_k$ for $\web=(\Rb{2},\fol_i,\fol_j,\fol_k)$ and some leaves
$F_i, F_j$ crossing the center $p$.

We build the volume-preserving holonomy groups in a similar way, by first
introducing a way to transport the points along the leaves of a codimension-$1$
divergence-free web $\web_\Omega$ with the help of the volume form $\Omega$.
Afterwards, we measure the nonuniformity tensor $\kcurv(\web_\Omega)$ by means
of compositions of such transport mappings.

\subsection{Reflections, loops and reflection holonomy}

Let us first clarify the construction of the point-transport map utilizing the
volume form. The next lemma requires an auxiliary definition.

\begin{defn}
  \label{def:dfw-adj}
  Let $\web$ be a regular codimension-$1$ regular $n$-web. Two regions $A,B$
  bounded by leaves of $\web$ are said to be \emph{adjacent} if $C=A\cup B$ is
  also a~region bounded by leaves of $\web$ and $A\cap B$ is entirely contained
  in a leaf $F$ of some $\fol\in\Fols(\web)$ (in which case we say that $A$ and
  $B$ are adjacent \emph{along $F$}, or that $F$ \emph{subdivides} the region
  $C$ into subregions $A$ and $B$).
\end{defn}

Given a foliation $\fol$ and an open subset $U$ of its domain, we denote by
$\fol_{|U}$ the foliation of $U$ whose leaves are exactly the connected
components of $F\cap U$ for $F$ the leaves of $\fol$.

\begin{lem}
  \label{thm:dfw-refl}
  Let $\web_\Omega$ be a divergence-free codimension-$1$ $n$-web on $M$ and let
  $\fol\in\Fols(\web_\Omega)$. There exists an open neighbourhood $U$ of $p$
  such that to each point $q\in U$ not lying on the leaf $G$ of
  $\gol_{|U}$ crossing $p$ for each $\gol\in\Fols(\web_\Omega)$ there
  corresponds a unique point $q'\in U$ different from $q$ such that the regions
  $A$ and $B$ bounded by leaves of $\web_\Omega$ crossing $p,q$ and $p,q'$
  respectively are adjacent along $\fol$ and have the same $\Omega$-volume. The
  correspondence $q\mapsto q'$ extends uniquely to a smooth map-germ
  $\maps{r_{p;\fol}}{(M,p)}{(M,p)}$.
\end{lem}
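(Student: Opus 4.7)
The plan is to translate the geometric existence-uniqueness problem into a scalar implicit equation solvable via the implicit function theorem. First I would choose a $\web_\Omega$-adapted chart $(U,\varphi)$ centered at $p$ with $T\fol_i=\ker dx_i$ for each $i$; since every $\codim\fol_i=1$, we have $\dim M=n$. Without loss of generality take $\fol=\fol_1$, write $\Omega=h(x)\,dx_1\wedge\cdots\wedge dx_n$ with $h(0)\neq 0$, and identify each $q$ with its coordinate tuple $(q_1,\ldots,q_n)$. The hypothesis that $q$ not lie on the leaf of any $\gol\in\Fols(\web_\Omega)$ through $p$ then amounts to all $q_i$ being nonzero.

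A short combinatorial analysis of the interval-product structure (Definition \ref{def:dfw-region}) shows that for $A=\ival{0,q}$ and $B=\ival{0,q'}$ to be adjacent along a leaf of $\fol_1$ in the sense of Definition \ref{def:dfw-adj}, one must have $q'_i=q_i$ for $i=2,\ldots,n$ while $q_1,q'_1$ lie on opposite sides of $0$; the shared face then automatically lies in $\set{x_1=0}\in\fol_1$, and $A\cup B$ is a region. Writing $\mathbf{q}'=(q_2,\ldots,q_n)$ and using that $h$ is of constant sign near $p$, the equal-$\Omega$-volume condition becomes
\[ V(q_1;\mathbf{q}')+V(q'_1;\mathbf{q}')=0,\qquad V(x;\mathbf{q}')=\int_0^x\!\int_{[0,q_2]\times\cdots\times[0,q_n]}\!h(t,\mathbf{x}')\,d\mathbf{x}'\,dt. \]

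A direct application of the implicit function theorem is blocked by the fact that $V(\,\cdot\,;\mathbf{q}')\equiv 0$ as soon as some $q_i$ with $i\geq 2$ vanishes; this degeneracy along the leaves through $p$ is the main technical obstacle. I would bypass it by substituting $x_i=s_i q_i$ in the inner integral so as to factor out $q_2\cdots q_n$: writing
\[ V(x;\mathbf{q}')=q_2\cdots q_n\cdot\tilde V(x;\mathbf{q}'),\qquad \tilde V(x;\mathbf{q}')=\int_0^x\!\int_{[0,1]^{n-1}}\!h(t,s_2 q_2,\ldots,s_n q_n)\,d\mathbf{s}\,dt, \]
the function $\tilde V$ extends smoothly across $\mathbf{q}'=0$, and the reduced equation $\Psi(q_1,q'_1,\mathbf{q}')\coloneqq\tilde V(q_1;\mathbf{q}')+\tilde V(q'_1;\mathbf{q}')=0$ is equivalent to the original one whenever $q_2\cdots q_n\neq 0$.

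Since $\Psi(0,0,0)=0$ and $\partial_{q'_1}\Psi(0,0,0)=\partial_x\tilde V(0;0)=h(0)\neq 0$, the implicit function theorem produces a unique smooth germ $q'_1=\rho(q_1,\mathbf{q}')$ at the origin with $\rho(0,0)=0$ solving $\Psi=0$; setting $r_{p;\fol}(q)\coloneqq(\rho(q_1,\mathbf{q}'),\mathbf{q}')$ defines the claimed smooth map-germ from $(M,p)$ to $(M,p)$. The implicit differentiation formula gives $\partial_{q_1}\rho(0,0)=-1$, so $\rho(q_1,\mathbf{q}')$ has sign opposite to $q_1$ for sufficiently small nonzero inputs; this excludes the spurious trivial branch $q'_1=q_1$ and guarantees that for admissible $q$ the point $q'=r_{p;\fol}(q)$ enjoys all the geometric properties demanded by the statement. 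Uniqueness of the smooth extension of the correspondence $q\mapsto q'$ follows directly from the uniqueness clause in the IFT.
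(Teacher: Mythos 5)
Your proposal is correct and follows essentially the same route as the paper: the authors also work in a $\web_\Omega$-adapted chart, remove the degeneracy along the leaves through $p$ by rescaling the inner integral (their function $f_i$ in $(\ref{eq:dfw-refl-implicit2})$ is exactly your $\tilde V$ device), and then apply the implicit function theorem, with monotonicity/sign considerations giving uniqueness of $q'$ and $q'\neq q$. The only cosmetic difference is that the paper derives uniqueness of the smooth extension from uniqueness of $q'$ on a dense set plus continuity rather than citing the IFT uniqueness clause directly.
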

\begin{proof}
  Fix a $\web_\Omega$-adapted coordinate system $(x_1,\ldots,x_n)$ centered at
  $p$ such that $T\fol=\ker dx_i$ and write $\Omega=h(x)\, dx_1\wedge
  dx_2\wedge\cdots\wedge dx_n$ for some $h\in C^\infty(M)$. Whenever two given
  points $q=(u_1,\ldots,u_n)$ and $q'=(u'_1,\ldots,u'_n)$ lie on the same leaf
  $F=\set{\forall_{j\neq i}\ x_j = u_j}$ of $\fol^c$, the difference between
  the integrals
  \begin{equation}
    \label{eq:dfw-refl-implicit1}
    g_i(q,u'_i)=g_i(u_1,u_2,\ldots,u_n,u'_i)
      = \Int_0^{u_i}\!f_i(q,s)\,ds - \Int_{u'_i}^0\!f_i(q,s)\,ds
  \end{equation}
  of the function $f_i$ defined by
  \begin{equation}
    \label{eq:dfw-refl-implicit2}
    \begin{aligned}
      &f_i(q,s) = f_i(u_1,u_2,\ldots,\widehat{u_i},\ldots,u_n,s) \\
        &\hskip 1em= \IterInt{0}{1}{0}{1} h(t_1u_1,\,\ldots,\, t_{i-1}u_{i-1},
          \,s,\,t_{i+1}u_{i+1},\,\ldots,\,t_nu_n)
          \ dt_1\cdots \widehat{dt_i}\cdots dt_n
    \end{aligned}
  \end{equation}
  is zero precisely when $\int_{\ival{p,q}}\Omega = -\int_{\ival{p,q'}}\Omega$,
  which, under the assumption $u_j\neq 0$ for $j=1,\ldots,n$, is true if
  and only if $\Vol{\Omega}(A) = \Vol{\Omega}(B)$ and $q\neq q'$.
  The uniqueness of the point $q'$ for fixed $q$ is guaranteed by the strict
  monotonicity of $g$ with respect to $u'_i$. Its local existence is
  deduced using the classical implicit function theorem applied to the function
  $g$ to obtain a smooth function $\maps{\rho_i}{\widetilde{U}}{\Rb{1}}$
  satisfying
  \begin{equation}
    \label{eq:dfw-refl-implicit3}
    g_i(q,\rho_i(q))
      = g_i(0) = \Int_0^0\! f_i(0,s)\,ds - \Int_0^0\! f_i(0,s)\,ds = 0,
  \end{equation}
  where $\widetilde{U}$ is an open neighbourhood of $p$ equal to
  $(-\varepsilon,\varepsilon)^n$ in coordinates for some $\varepsilon>0$. Put
  \begin{equation}
    r_{p;\fol_i}(u_1,\ldots,u_n) =
      (u_1,\ldots,u_{i-1},\rho_i(u_1,\ldots,u_n),u_{i+1},\ldots,u_n)
  \end{equation}
  to obtain a smooth map $\maps{r_{p;\fol_i}}{\widetilde{U}}{M}$ extending the
  correspondence $q\mapsto q'$. Any other smooth extension of this
  correspondence coincides with $r_{p;\fol_i}$ on an open and dense subset of a
  sufficiently small open neighbourhood of $p$ by uniqueness of $q'$, hence is
  equal to $r_{p;\fol_i}$ as a map-germ by continuity. Thus, the smooth
  extension of $q\mapsto q'$ to a neighbourhood of $p$ is unique. Since
  $r_{p;\fol_i}(p) = p$, we can put $U=\widetilde{U}\cap
  r_{p;\fol_i}^{-1}(\widetilde{U})$ to finish the proof.
\end{proof}

\begin{defn}
  \label{def:dfw-reflections}
  Let $\fol\in\Fols(\web_\Omega)$. The map-germ
  $\maps{r_{p;\fol}}{(M,p)}{(M,p)}$ defined in the above proposition is called
  the \emph{volume-preserving reflection through $\fol$ anchored at $p$}.
\end{defn}

These reflections are defined in an invariant way, hence they behave well with
respect to local equivalences of divergence-free webs.

\begin{prop}
  \label{thm:dfw-refl-conj}
  If $\maps{\varphi}{(M,p)}{(N,s})$ is a germ of a local equivalence between
  codimension-$1$ webs $\web_M$ and $\web_N$ mapping the leaves of foliation
  $\fol$ onto the leaves of foliation $\tau(\fol)$ for some map
  $\maps{\tau}{\Fols(\web_M)}{\Fols(\web_N)}$, then the corresponding
  reflections are conjugate:
  \begin{equation}
    \label{eq:dfw-refl-conj}
    r_{s,\tau(\fol)}
      = \varphi\circ r_{p;\fol}\circ\varphi^{-1}
        \qquad\text{for each }\fol\in\Fols(\web_\Omega).
  \end{equation}
\end{prop}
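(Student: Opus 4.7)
The plan is to verify that the right-hand side of $(\ref{eq:dfw-refl-conj})$ satisfies the defining geometric properties of $r_{s;\tau(\fol)}$, and then invoke the uniqueness part of Lemma \ref{thm:dfw-refl} to conclude.

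First, I would fix a point $q'$ in a small enough neighbourhood of $s$ in $N$ which does not lie on any of the leaves of $\gol'$ through $s$ for $\gol'\in\Fols(\web_N)$, and set $q=\varphi^{-1}(q')$. Since $\varphi$ is an equivalence of webs, $q$ is likewise not on any leaf of $\Fols(\web_M)$ through $p$. Let $q_1=r_{p;\fol}(q)$, so by Lemma \ref{thm:dfw-refl} the regions $A,B\subseteq M$ bounded by leaves of $\web_M$ crossing the pairs $p,q$ and $p,q_1$ are adjacent along the leaf $F\in\fol$ through $p$, with $\Vol{\Omega_M}(A)=\Vol{\Omega_M}(B)$.

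Next I would transport this picture by $\varphi$. Because $\varphi$ carries leaves of each $\gol\in\Fols(\web_M)$ bijectively onto leaves of $\tau(\gol)\in\Fols(\web_N)$ and is a diffeomorphism, the image $\varphi(A)$ is a region bounded by leaves of $\web_N$ crossing $s$ and $q'$ in the sense of Definition \ref{def:dfw-region}; one sees this by conjugating a $\web_M$-adapted chart at $p$ with $\varphi$ to produce a $\web_N$-adapted chart at $s$. The same reasoning applies to $\varphi(B)$, which is the region bounded by leaves of $\web_N$ crossing $s$ and $\varphi(q_1)$. The two images are adjacent along $\varphi(F)$, a leaf of $\tau(\fol)$, in the sense of Definition \ref{def:dfw-adj}. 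Finally, the relation $\varphi^*\Omega_N = \Omega_M$ gives
\begin{equation}
  \Vol{\Omega_N}(\varphi(A)) = \Vol{\Omega_M}(A)
  = \Vol{\Omega_M}(B) = \Vol{\Omega_N}(\varphi(B)).
\end{equation}

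By the uniqueness clause of Lemma \ref{thm:dfw-refl}, the point $\varphi(q_1)$ must coincide with $r_{s;\tau(\fol)}(q')$ for every $q'$ in an open dense subset of a neighbourhood of $s$, which yields $\varphi\circ r_{p;\fol}\circ\varphi^{-1}=r_{s;\tau(\fol)}$ on that subset. Both sides being smooth map-germs, continuity extends the equality to the whole germ. The only subtle step is the identification of $\varphi(A),\varphi(B)$ as bona fide regions bounded by leaves in the sense of Definition \ref{def:dfw-region}; this is handled by pulling back a $\web_N$-adapted chart at $s$ through $\varphi$ to obtain a $\web_M$-adapted chart at $p$ (up to a permutation of coordinates reflecting the action of $\tau$ on $\Fols(\web_M)$), under which the rectangular form of $A,B$ is transparently preserved.
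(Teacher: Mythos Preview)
Your argument is correct and follows essentially the same route as the paper: transport the adjacent equal-volume regions $A,B$ by the volume-preserving web-equivalence $\varphi$, invoke the uniqueness clause of Lemma \ref{thm:dfw-refl} on the dense set of points off the leaves through $s$, and extend by continuity. Your version is slightly more explicit about why $\varphi(A),\varphi(B)$ are genuine regions bounded by leaves (via conjugated adapted charts), which the paper leaves implicit.
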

\begin{proof}
  Choose representatives of $r_{p;\fol}$, $r_{s,\tau(\fol)}$ with their
  respective domains $U$, $V$ as in the statement of Lemma
  \ref{thm:dfw-refl}. Moreover, assume without loss of generality that
  $\varphi=\varphi_{|U}$ is a divergence-free web-equivalence and
  $\varphi(U)=V$. Now, if two different points $q,q'$ span adjacent regions
  $A$, $B$ bounded by the leaves of $\web_M$ of equal
  volume crossing $p,q$ and $p,q'$ respectively, their images $\varphi(q),
  \varphi(q')$ also do so, since $\varphi$ is a volume-preserving
  web-equivalence. This immediately leads to the implication
  \begin{equation}
    q' = r_{p;\fol}(q) \ \implies\ \varphi(q') = r_{s,\tau(\fol)}(\varphi(q))
  \end{equation}
  for $q$ in a dense subset of $U$ by uniqueness, which by continuity gives
  the sought relation $(\ref{eq:dfw-refl-conj})$.
\end{proof}

In a similar vein one proves the following properties of volume-preserving
reflection mappings.

\begin{lem}
  \label{thm:dfw-refl-props}
  Let $\web_\Omega$ be a divergence-free $n$-web of codimension $1$ on $M$ with
  volume form $\Omega$. Fix $p\in M$ and $\fol\in\Fols(\web_\Omega)$ and denote
  by $r_{p;\fol}$ the volume-preserving reflection of $\web_\Omega$ along
  $\fol$. Then
  \begin{enumerate}[label=$(\arabic{enumi})$,ref=\arabic{enumi}]
    \item \label{thm:dfw-refl-props:inv}
      $r_{p;\fol}\circ r_{p;\fol} = \mathrm{id}$,
    \item \label{thm:dfw-refl-props:proj}
      if $c\in\Rb{1}\setminus\set{0}$ and $\tilde{r}_{p;\fol}$ is the
      volume-preserving reflection map-germ corresponding to the
      divergence-free $n$-web $(M,\Fols(\web_\Omega),c\Omega)$, there is an
      equality $r_{p;\fol}=\tilde{r}_{p;\fol}$.
    \item \label{thm:dfw-refl-props:sign}
      in any $\web_\Omega$-adapted coordinates chart $(U, (x_1,\ldots,x_n))$
      around $p$ in which $T\fol=\ker dx_i$, the condition $x_i(q) > 0$ implies
      $x_i(r_{p;\fol}(q)) < 0$ for any $q\in U$.
  \end{enumerate}
\end{lem}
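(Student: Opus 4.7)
The plan is to dispatch each of the three items by leaning on the uniqueness clause of Lemma \ref{thm:dfw-refl} and, where needed, an elementary sign analysis of the implicit equation $(\ref{eq:dfw-refl-implicit1})$--$(\ref{eq:dfw-refl-implicit3})$. None of the arguments should require new machinery.

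For item $(\ref{thm:dfw-refl-props:inv})$ I would point out that the geometric condition characterising $q' = r_{p;\fol}(q)$ in Lemma \ref{thm:dfw-refl} --- namely that the regions enclosed by the leaves of $\web_\Omega$ crossing $p,q$ and $p,q'$ are adjacent along $\fol$, of equal $\Omega$-volume, and distinct --- is manifestly symmetric in $q$ and $q'$. Hence on the open dense subset of a small neighbourhood of $p$ where the hypotheses of that lemma apply, $q$ itself is the unique candidate for the reflected image of $q' = r_{p;\fol}(q)$, forcing $r_{p;\fol}(r_{p;\fol}(q)) = q$. The promotion to the identity of germs follows because both sides are smooth and agree on a dense set.

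For item $(\ref{thm:dfw-refl-props:proj})$ I would observe that replacing $\Omega$ by $c\Omega$ with $c \neq 0$ rescales every $\Omega$-volume by the factor $\abs{c}$, so equality of $\Omega$-volumes of two regions is equivalent to equality of their $(c\Omega)$-volumes; consequently the characterisation of the reflection is unaffected and uniqueness gives $r_{p;\fol} = \tilde r_{p;\fol}$. Equivalently, the equation $g_i(q,\rho_i(q)) = 0$ of $(\ref{eq:dfw-refl-implicit3})$ is homogeneous in $h$, so the solution $\rho_i$ is insensitive to multiplying $h$ by a nonzero scalar. For item $(\ref{thm:dfw-refl-props:sign})$ I would work in the chart from the statement, in which $h$ has constant sign on a sufficiently small neighbourhood of $p$ by virtue of being nowhere vanishing and continuous. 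The integrand $f_i(q,\cdot)$ of $(\ref{eq:dfw-refl-implicit2})$ inherits this sign, so $s \mapsto \int_0^s f_i(q,\sigma)\,d\sigma$ is strictly monotone and sign-preserving in $s$. Rewriting $(\ref{eq:dfw-refl-implicit3})$ as
\begin{equation*}
  \int_0^{u_i} f_i(q,s)\,ds \;=\; -\int_0^{\rho_i(q)} f_i(q,s)\,ds
\end{equation*}
then forces $u_i = x_i(q)$ and $\rho_i(q) = x_i(r_{p;\fol}(q))$ to carry opposite signs whenever $u_i \neq 0$, which is precisely the asserted implication.

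I expect no serious obstacle here; the only point that wants a little care is passing from the dense-set identity in $(\ref{thm:dfw-refl-props:inv})$ to an equality of germs, which is handled by the smoothness of the extension furnished by Lemma \ref{thm:dfw-refl}.
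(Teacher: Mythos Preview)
Your proposal is correct and follows essentially the same approach as the paper: items $(\ref{thm:dfw-refl-props:inv})$ and $(\ref{thm:dfw-refl-props:proj})$ are deduced from the uniqueness clause of Lemma \ref{thm:dfw-refl}, and item $(\ref{thm:dfw-refl-props:sign})$ from the strict monotonicity of the signed volume in the $x_i$-direction. You have simply unpacked in detail what the paper records in a couple of lines.
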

\begin{proof}
  Properties $(\ref{thm:dfw-refl-props:inv})$ and
  $(\ref{thm:dfw-refl-props:proj})$ both follow directly from Lemma
  \ref{thm:dfw-refl} and its uniqueness statement. For the last property
  $(\ref{thm:dfw-refl-props:sign})$ use Lemma \ref{thm:dfw-refl} together
  with the fact that the function $x\mapsto \int_{\ival{p,x}}\!\Omega$ is
  strictly monotone for generic $x\in U$ with respect to $x_i$.
\end{proof}

The invariance statement of Proposition \ref{thm:dfw-refl-conj} carries over
naturally to compositions of map-germs $r_{p;\fol}$ for
$\fol\in\Fols(\web_\Omega)$. The properties of $r_{p;\fol}$ listed above
suggest that the shortest among possibly nontrivial such compositions which
are actually comparable to the identity (hence can be used to gauge
nontriviality of the web $\web_\Omega$) are the commutators of two reflections.

\begin{defn}
  \label{def:dfw-loops}
  Let $p\in M$ and $\fol,\gol\in\Fols(\web_\Omega)$. The germ of the
  composition
  \begin{equation}
    \ell_{p;\fol,\gol}
      = r_{p,\gol}\circ r_{p;\fol}\circ r_{p,\gol}\circ r_{p;\fol}
  \end{equation}
  will be called a \emph{(volume-preserving) loop through $\fol$ and $\gol$
  anchored at $p$}.
\end{defn}

\begin{defn}
  Let $p\in M$. The commutator $[\mathcal{R}_p, \mathcal{R}_p]$ of the group
  $\mathcal{R}_p$ generated by volume-preserving reflections $r_{p;\fol}$
  anchored at $p$ for $\fol\in\Fols(\web_\Omega)$ will be called the
  \emph{(volume-preserving) reflection holonomy group of
  $(M,\fol_1,\ldots,\fol_n,\Omega)$ at $p$}.
\end{defn}

The reflection holonomy groups are generated by conjugates of the loops
$\ell_{p;\fol,\gol}$ by compositions of volume-preserving reflections. Their
triviality can be rephrased as commutativity of all pairs of reflections, i.e.
all loops being equal to identities in the neighbourhood of a given point.

\begin{exmp}
  \label{ex:dfw-refl-trivial}
  Let $\web_0=(\Rb{n},\fol_1,\ldots,\fol_n,\Lambda)$ be a trivial
  divergence-free $n$-web. In the standard coordinate system
  $(x_1,x_2,\ldots,x_n)$, in which $T\fol_i=\ker dx_i$ and $\Lambda =
  dx_1\wedge dx_2\wedge\cdots\wedge dx_n$, the reflection maps
  anchored at $p\in\Rb{n}$ become ordinary linear reflections
  \begin{equation}
    r_{p;\fol_i}(q_1,\ldots,q_{i-1},q_i,q_{i+1},\ldots,q_n)
      = (q_1,\ldots,q_{i-1},-q_i+2p_i,q_{i+1},\ldots,q_n).
  \end{equation}
  The reflection holonomy
  group $[\mathcal{R}_p,\mathcal{R}_p]$ of $\web_0$ is trivial.
\end{exmp}

The reflection holonomy groups $[\mathcal{R}_p,\mathcal{R}_p]$ can be treated
as covariants of the web $\web_\Omega$ via the following construction. Every
germ of a divergence-free web-equivalence $\varphi$ acts on the reflections at
a point $p$ by conjugation: if $\varphi$ maps the leaves of
$\fol\in\Fols(\web_\Omega)$ onto the leaves of $\tau(\fol)$, then $\varphi$
transforms $r_{p;\fol}$ into $r_{\varphi(p), \tau(\fol)} = \varphi\circ
r_{p;\fol}\circ \varphi^{-1}$. Hence, $\varphi$ defines a group isomorphism
$\varphi_*:\mathcal{R}_p\to \mathcal{R}_{\varphi(p)}$, which restricts to an
isomorphism of the corresponding commutators. If $p=\varphi(p)$, then
$\varphi_*$ is simply the conjugation by $\varphi$ inside $\mathrm{Diff}(M,p)$.

\begin{defn}
  Let $(U,\varphi)$ be a $\web_\Omega$-adapted chart on some open neighbourhood
  $U$ centered at $p\in M$. Then the conjugacy class of
  $\varphi_*[\mathcal{R}_p,\mathcal{R}_p]$ inside the group
  $\mathrm{Diff}(\Rb{n},0)$ of diffeomorphism-germs will be called the
  \emph{(volume-preserving) reflection holonomy of $\web_\Omega$ at $p$}.
\end{defn}

The reflection holonomy is a local invariant of the web by construction. Its
triviality is dependent upon the nonuniformity tensor of the web, as we will
soon show. Below we give an elementary example of a divergence-free web with
nontrivial holonomy.

\begin{exmp}
  \label{ex:dfw-holonomy-nontrivial}
  Let $(x,y)$ be the standard coordinates on $\Rb{2}$ and let
  $\web_0=(I^2,\fol,\gol,\Omega)$ be a divergence-free $2$-web on
  $I=(-1,1)\subseteq\Rb{1}$ with $\Omega = (1+xy)\,dx\wedge dy$. Its
  nonuniformity tensor is $\kcurv(\web_{0;\Omega}) = (1+xy)^{-2}\,dx\,dy\neq
  0$. Elementary computations allow us to recover the closed form of the
  volume-preserving reflections anchored at $0$:
  \begin{equation}
    \begin{aligned}
      r_{0,\fol}(x,y) &= (z(x,y), y), \\
      r_{0,\gol}(x,y) &= (x, z(y,x)),
    \end{aligned}
  \end{equation}
  where the smooth function
  \begin{equation}
    z(x,y) = \begin{cases}
      \frac{\sqrt{4(1-xy) - x^2y^2}-2}{y},& \quad\text{if }y\neq 0, \\[1ex]
      -x,& \quad\text{if }y=0
    \end{cases}
  \end{equation}
  is defined on some open neighbourhood of $0$. Using these we compute the
  closed form of the loop $\ell_{0;\fol,\gol}$, which turns out to be
  \begin{equation}
    \ell_{0;\fol,\gol}(x,y) = \textstyle{\left(
      \frac{(\sqrt{4(1-xy)-x^{2}y^{2}} - 2)^{2}}
        {xy^2},
      \frac{x^2y^3}
        {(\sqrt{4(1-xy)-x^{2}y^{2}} - 2)^{2}}
    \right)}.
  \end{equation}
  Since it generates the holonomy group of $\web_\Omega$ at $0$,
  $[\mathcal{R}_0, \mathcal{R}_0]$ is nontrivial.
\end{exmp}

\begin{figure}
  \centering
  \def\svgwidth{\textwidth}
  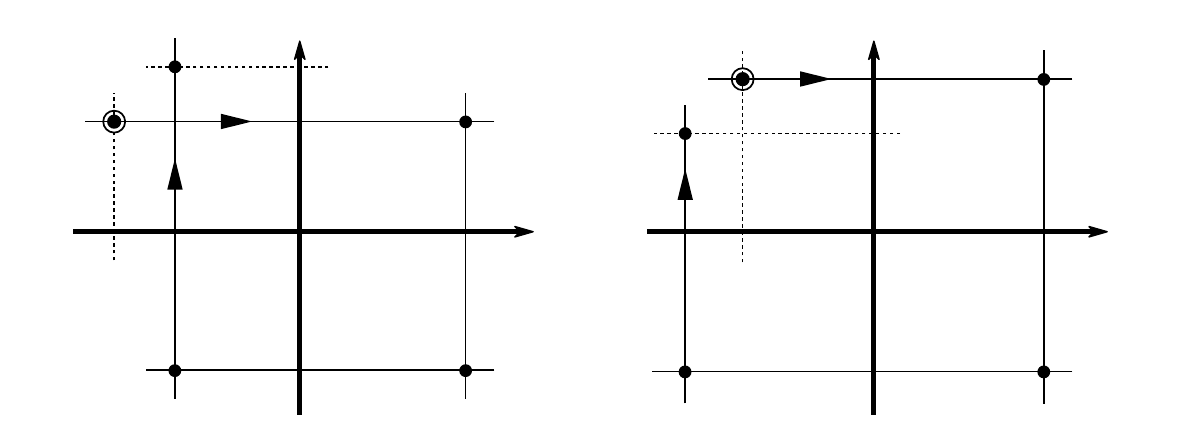
  \caption{\footnotesize The effect of the nonuniformity tensor
  $\kcurv(\web_\Omega) = \sum_{i\neq j} k_{ij}\,dx_idx_j$ on reflections.}
  \label{fig:dfw-curv}
\end{figure}

\begin{lem}
  \label{thm:dfw-refl-taylor}
  Let $\web_\Omega=(M,\fol_1,\ldots,\fol_n,\Omega)$ be a divergence-free
  $n$-web of codimension $1$. Fix a point $p\in M$ and a $\web_\Omega$-adapted
  coordinate system $(x_1,\ldots,x_n)$ centered at $p$. The volume-preserving
  reflection along the foliation $\fol_i\in\Fols(\web_\Omega)$ generated
  locally by $\ker dx_i$ satisfies
  \begin{equation}
    r_{p;\fol_i}(x) = (x_1,\ldots,x_{i-1},z(x),x_{i+1},\ldots,x_n)
  \end{equation}
  where
  \begin{equation}
    z(x) = - x_i - \alpha_{i}x_i^2
      - \alpha_{i}^2x_i^3
      - \sum_{j\neq i} \alpha_{ij}x_i^2x_j
      + o(|x|^3),
  \end{equation}
  with $\alpha_{i} = (\smfrac{\partial}{\partial{x_i}} \log h)(0)$ and
  $\alpha_{ij} = (\smfrac{\partial^2}{\partial{x_ix_j}} \log h)(0)$.
\end{lem}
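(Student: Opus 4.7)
The plan is to solve, by order-by-order Taylor expansion, the implicit equation characterizing $z$ that is extracted from the proof of Lemma~\ref{thm:dfw-refl}. In the given coordinates, $r_{p;\fol_i}$ fixes every coordinate but the $i$-th, and $z(x)$ is the unique solution of
\begin{equation*}
  K(x_i) + K(z) = 0, \qquad
  K(t) = \Int_0^t f_i(\tilde x, s)\,ds, \qquad
  \tilde x = (x_1,\ldots,\widehat{x_i},\ldots,x_n),
\end{equation*}
with $f_i$ given by (\ref{eq:dfw-refl-implicit2}).

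First I would Taylor expand $h$ at the origin, substitute into $f_i$, and integrate over $[0,1]^{n-1}$ using the standard moments $\int_0^1 t_k^{a}\,dt_k = 1/(a+1)$. The outcome is $K(t) = A(\tilde x)\,t + \tfrac{1}{2}B(\tilde x)\,t^2 + \tfrac{1}{6}C(\tilde x)\,t^3 + O(t^4)$, with smooth $\tilde x$-dependent coefficients satisfying $A(0)=h(0)$, $B(0)=\partial_i h(0)$ and $C(0)=\partial_i^2 h(0)$; the linear-in-$\tilde x$ corrections to $A$ and $B$ are explicit combinations of the mixed partial derivatives of $h$ at $0$.

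Second, I substitute $z = -x_i + u$ into the implicit equation. The matching odd-$x_i$-parity terms cancel up to corrections in $u$, producing a relation of the form $A\,u + B\,x_i^2 + (\text{higher-order in } x_i, u) = 0$ that solves iteratively to yield $u = -\tfrac{B}{A}\,x_i^2 - \tfrac{B^2}{A^2}\,x_i^3 + O(x_i^4)$. I then Taylor expand the ratios $B/A$ and $B^2/A^2$ in $\tilde x$ and rewrite the appearing partial derivatives via
\begin{equation*}
  \frac{\partial^2\log h}{\partial x_i\,\partial x_j}(0) = \frac{\partial_i\partial_j h(0)}{h(0)} - \frac{\partial_i h(0)\,\partial_j h(0)}{h(0)^2},
\end{equation*}
which converts everything into the logarithmic derivatives $\alpha_i, \alpha_{ij}$ appearing in the statement. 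Reassembling $z = -x_i + u$ modulo $o(|x|^3)$ reproduces the claimed expansion.

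The main obstacle is the layered bookkeeping: $h$ is first expanded inside the integrand to produce $\tilde x$-dependent coefficients $A, B, C$, and then the ratio $B/A$ must be expanded a second time, all while keeping only terms of total degree at most three in the joint variables $(x_i, \tilde x)$. The key simplification is that the partial-derivative combination $h_{ij}/h - h_i h_j/h^2$ emerging from $B/A$ is precisely the logarithmic derivative $\alpha_{ij}$, which cleans up the final formula.
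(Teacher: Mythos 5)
Your strategy is exactly the one the paper has in mind: the lemma is stated without proof, with only the remark that it ``can be reduced to elementary (but quite tedious) calculations involving the implicit function $g_i$'', and your plan --- expand $h$, integrate to get $K(t)=A\,t+\tfrac12B\,t^2+\tfrac16C\,t^3+O(t^4)$, solve $K(x_i)+K(z)=0$ by setting $z=-x_i+u$ and iterating --- is that calculation. The pure-$x_i$ part checks out: one gets $u=-\tfrac{B}{A}x_i^2-\tfrac{B^2}{A^2}x_i^3+O(x_i^4)$ and hence the coefficients $-\alpha_i$ and $-\alpha_i^2$ (e.g.\ for $h=e^{ax_i}$ the exact solution $z=\tfrac1a\log(2-e^{ax_i})$ expands to $-x_i-ax_i^2-a^2x_i^3+\dots$).

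The gap is that the one step carrying the content of the lemma --- the mixed coefficient --- is asserted (``reproduces the claimed expansion'') rather than computed, and when you actually compute it your own recipe does \emph{not} give the stated coefficient. The moment integrals produce $A=h_0+\tfrac12\sum_{k\neq i}h_kx_k+O(|\tilde x|^2)$ and $B=h_i+\tfrac12\sum_{k\neq i}h_{ik}x_k+O(|\tilde x|^2)$ (the $\tfrac12$ coming from $\int_0^1t_k\,dt_k=\tfrac12$), so
\begin{equation*}
  \frac{B}{A}=\alpha_i+\tfrac12\sum_{k\neq i}\alpha_{ik}x_k+O(|\tilde x|^2),
  \qquad
  u=-\alpha_ix_i^2-\alpha_i^2x_i^3-\tfrac12\sum_{j\neq i}\alpha_{ij}\,x_i^2x_j+o(|x|^3),
\end{equation*}
i.e.\ half the cross term claimed in the statement. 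This is corroborated by the paper's own Example \ref{ex:dfw-holonomy-nontrivial}: there $h=1+xy$, so $\alpha_{12}=1$, while the closed form gives $z(x,y)=\bigl(\sqrt{4(1-xy)-x^2y^2}-2\bigr)/y=-x-\tfrac12x^2y+o(|x|^3)$. So either the lemma carries a factor-of-$2$ misprint in the $\alpha_{ij}$ term (propagating to the $2\kappa_{ij}$ in Lemma \ref{thm:dfw-loop-taylor}, whereas the explicit loop in that same example expands to $x+\kappa_{12}x^2y+\dots$), or a normalization has been lost somewhere; in either case you must actually carry the bookkeeping through and confront the discrepancy rather than declare agreement. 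Until you do, the proposal proves a statement with a possibly different constant, which is precisely the part of the lemma that feeds into the holonomy--curvature comparison.
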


\begin{lem}
  \label{thm:dfw-loop-taylor}
  Let $\web_\Omega=(M,\fol_1,\ldots,\fol_n,\Omega)$ be a divergence-free
  $n$-web of codimension $1$. Fix a point $p\in M$ and a $\web_\Omega$-adapted
  coordinate system $(x_1,\ldots,x_n)$ centered at $p$. Express the volume form
  as $\Omega=h(x)\,dx_1\wedge dx_2\wedge\cdots\wedge dx_n$ and the
  nonuniformity tensor of $\web_\Omega$ at $p$ as $\kcurv(\web_\Omega)_{|p} =
  \sum_{i\neq j} \kappa_{ij}\,dx_idx_j$, where $\kappa_{ij}=\frac{\partial\log
  h}{\partial x_i\,\partial x_j}(p)$. In this setting, the volume-preserving
  loop along the foliations $\fol_i$, $\fol_j$ with $T\fol_i=\ker dx_i$
  and $T\fol_j=\ker dx_j$ satisfies
  \begin{equation}
    \ell_{p;\fol_i,\fol_j}(x) =
      (x_1,\ldots,x_{i-1},u_i(x),
        x_{i+1},\ldots,x_{j-1},u_j(x),x_{j+1},\ldots,x_n)
  \end{equation}
  where the $i^{\text{th}}$ and $j^{\text{th}}$ coordinates of the image
  satisfy
  \begin{equation}
    \begin{aligned}
      u_i(x) &= x_i + 2\kappa_{ij} x^2_ix_j + o(|x|^3)\quad\text{and} \\
      u_j(x) &= x_j - 2\kappa_{ij} x_ix^2_j + o(|x|^3).
    \end{aligned}
  \end{equation}
  respectively (cf. Figure \ref{fig:dfw-curv}).
\end{lem}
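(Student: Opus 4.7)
The plan is to apply Lemma~\ref{thm:dfw-refl-taylor} four times in succession and compose the resulting Taylor expansions, retaining all terms of degree at most three. Write $R_i = r_{p;\fol_i}$ and $R_j = r_{p;\fol_j}$, so that $\ell_{p;\fol_i,\fol_j} = R_j \circ R_i \circ R_j \circ R_i$, and let $x^{(k)}$ denote the intermediate state after $k$ reflections, with $x^{(0)} = x$ and $x^{(4)} = \ell_{p;\fol_i,\fol_j}(x)$.

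The decisive simplification is that $R_i$ modifies only the $i$-th coordinate and $R_j$ only the $j$-th, leaving the remaining $n-2$ coordinates fixed throughout the composition; those coordinates therefore act as inert parameters, and only the pair $(x_i,x_j)$ needs to be tracked. Moreover, each reflection negates its coordinate to leading order, so $(x^{(k)}_i)^2 = x_i^2 + O(|x|^3)$ and $(x^{(k)}_i)^3 = \pm x_i^3 + O(|x|^4)$ with a sign determined by the parity of the number of applications of $R_i$ used to produce $x^{(k)}$, and similarly for $x^{(k)}_j$. This permits replacement of intermediate powers by powers of the original variables modulo higher-order errors at each substitution step, without carrying the full substitution algebra.

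When the four contributions are assembled, all terms involving only $\alpha_i$ or only $\alpha_j$ cancel in pairs, reflecting the fact that $R_i$ and $R_j$ are involutions up to higher order. By contrast, the cross-terms involving the mixed second partial $\alpha_{ij} = \kappa_{ij}$ add constructively. On the $i$-component, the contribution $-\alpha_{ij} x_i^2 x_j$ that enters $x^{(1)}_i$ via the raw expansion of $R_i$ is negated by the leading $-x^{(2)}_i$ during the second application of $R_i$, yielding $+\alpha_{ij} x_i^2 x_j$; an identical $+\alpha_{ij} x_i^2 x_j$ then arises from $-\alpha_{ij}(x^{(2)}_i)^2 x^{(2)}_j$ after using $x^{(2)}_j = -x_j + O(|x|^2)$, so the total $i$-component correction is $+2\kappa_{ij} x_i^2 x_j$. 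The analogous mechanism on the $j$-component, with $x^{(3)}_i = x_i + O(|x|^3)$ replacing the role of $x^{(2)}_j$, produces $-2\kappa_{ij} x_i x_j^2$, matching the statement. The only real obstacle is bookkeeping: all cubic terms must be tracked carefully through four compositions, but no idea beyond Lemma~\ref{thm:dfw-refl-taylor} is required.
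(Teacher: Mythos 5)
Your proposal is correct and is essentially the argument the paper has in mind: the paper only remarks that the lemma follows from "elementary but tedious" calculations, and composing the four Taylor expansions from Lemma~\ref{thm:dfw-refl-taylor} is precisely that computation. Your sign bookkeeping checks out — the pure $\alpha_i$- and $\alpha_j$-terms cancel (consistent with $r_{p;\fol}\circ r_{p;\fol}=\mathrm{id}$), and the asymmetry between $x^{(2)}_j=-x_j+O(|x|^2)$ and $x^{(3)}_i=x_i+O(|x|^3)$ correctly produces the opposite signs $+2\kappa_{ij}x_i^2x_j$ and $-2\kappa_{ij}x_ix_j^2$.
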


The proof of the two propositions above can be reduced to elementary (but quite
tedious) calculations involving the implicit function $g_i$ defined by equation
(\ref{eq:dfw-refl-implicit3}).

\subsection{Geometric triviality conditions}

For planar divergence-free $2$-webs, the curvature of the natural connection
was given a vivid and intuitive geometric interpretation in \cite[Fig.
2]{2-webs}. We are now ready to give its extension to the case of an arbitrary
codimension-$1$ divergence-free $n$-web, which will be used in the next section
to characterize the curvature geometrically in full generality. Before
proceeding, recall the definition of \emph{adjacency} of regions bounded by
leaves; it can be found in Definition \ref{def:dfw-adj}.

\begin{thm}
  \label{thm:dfw-geom}
  Let $\web_\Omega=(M,\fol_1,\ldots,\fol_n,\Omega)$ be a codimension-$1$
  divergence-free $n$-web. The following conditions are equivalent.
  \begin{enumerate}[label=$(\arabic*)$, ref=(\arabic*)]
    \item\label{thm:dfw-geom:triv}
      The web $\web_\Omega$ is locally trivial.
    \item\label{thm:dfw-geom:taba}
      For each pair $\fol,\gol\in\Fols(\web_\Omega)$ of two different
      foliations of $M$, any region bounded by leaves $K$, and any two open
      subsets of leaves $F\in\fol$, $G\in\gol$ which subdivide $K$ into four
      subregions $A,B,C,D$ with $(A\cup B)\cap(C\cup D)\subseteq F$ and $(A\cup
      D)\cap(B\cup C)\subseteq G$, the respective $\Omega$-volumes $a,b,c,d$ of
      $A,B,C,D$ satisfy
      \begin{equation}
        ac=bd.
      \end{equation}
    \item\label{thm:dfw-geom:cut}
      For each pair $\fol,\gol\in\Fols(\web_\Omega)$ of two different
      foliations of $M$, any region bounded by leaves $K$, and any two open
      subsets of leaves $F\in\fol$, $G\in\gol$ which subdivide $K$ into four
      subregions $A,B,C,D$ with $(A\cup B)\cap(C\cup D)\subseteq F$ and $(A\cup
      D)\cap(B\cup C)\subseteq G$ in such a way that the $\Omega$-volumes
      $a,b,c,d$ of $A,B,C,D$ satisfy $a+b = c+d$, the equality $a=b$ implies
      $a=b=c=d$.
    \item\label{thm:dfw-geom:split}
      For any region bounded by leaves $K$ and each $k=1,2,\ldots,n$ there
      exist open subsets of leaves $F_i\in\fol_i$ for $i=1,2,\ldots,k$ which
      subdivide $K$ into $2^k$ subregions with equal $\Omega$-volumes.
    \item\label{thm:dfw-geom:hol}
      The volume-preserving reflection holonomy of $\web_\Omega$ at each
      point $p\in M$ is trivial.
    \item\label{thm:dfw-geom:curv}
      The nonuniformity tensor $\kcurv(\web_\Omega)$ vanishes identically.
  \end{enumerate}
\end{thm}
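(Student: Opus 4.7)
The plan is to anchor the equivalence at $(1) \iff (6)$, which follows directly from Theorem \ref{thm:dfw-logh} combined with the coordinate formula for $\kcurv(\web_\Omega)$ in Definition \ref{def:dfw-k}: vanishing of the nonuniformity tensor is equivalent to $\partial^2 \log h / \partial x_k \partial x_l = 0$ for all $k \not\sim l$, and hence to local triviality. I would then use a hub-and-spoke structure, showing that $(1)$ implies each of $(2)$--$(5)$ directly in trivial coordinates and that each of $(2)$, $(3)$, $(5)$ implies $(6)$ by an infinitesimal analysis; condition $(4)$ will be bridged through $(2)$.

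For the forward implications, work in trivial coordinates where $\Omega = dx_1 \wedge \cdots \wedge dx_n$ and each $T\fol_i = \ker dx_i$. Regions bounded by leaves are boxes, and a box $K$ cut by two coordinate hyperplanes decomposes into four sub-boxes whose side lengths in the two relevant directions are $(s_1, t_1)$, $(s_2, t_1)$, $(s_2, t_2)$, $(s_1, t_2)$, so $ac = s_1 s_2 t_1 t_2 = bd$, giving $(2)$. Condition $(3)$ then follows by elementary algebra ($ac = bd$ with $a = b$ forces $c = d$, and $a + b = c + d$ forces $a = c$). Condition $(4)$ follows by iterated bisection along each chosen coordinate direction, and $(5)$ from the fact that the reflections become ordinary affine reflections which pairwise commute (Example \ref{ex:dfw-refl-trivial}), with Proposition \ref{thm:dfw-refl-conj} transporting triviality of holonomy back to the original chart.

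The converse implications do the real work. For $(5) \implies (6)$, Lemma \ref{thm:dfw-loop-taylor} provides a Taylor expansion of $\ell_{p;\fol_i,\fol_j}$ whose cubic coefficient $2\kappa_{ij}(p)$ must vanish if the loop is the identity germ, forcing $\kcurv(\web_\Omega)_{|p} = 0$ at every $p$. For $(2) \implies (6)$, take $\web_\Omega$-adapted coordinates centered at $p$ with $\Omega = h\, dx_1 \wedge \cdots \wedge dx_n$, fix indices $i \neq j$, and consider boxes $K = [-\varepsilon,\varepsilon]^n$ subdivided by $\{x_i = \xi\}$ and $\{x_j = \eta\}$; expanding the four volume integrals in the Taylor series of $\log h$ about $p$ and imposing $ac = bd$ identically in $(\xi,\eta,\varepsilon)$ yields at leading order the vanishing of $\partial^2 \log h/\partial x_i \partial x_j$ at $p$. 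For $(4) \implies (2)$, iterated equal bisection generates a dense family of sub-box side-length ratios for which $ac = bd$ can be read off directly, and $(2)$ follows by continuity. For $(3) \implies (6)$, apply the implicit function theorem to the map $(\xi,\eta) \mapsto (a + b - c - d,\, a - b)$ to obtain for each small $\varepsilon$ a unique configuration satisfying the hypothesis of $(3)$; the forced conclusion $c = d$ then becomes a functional equation on $h$ whose leading-order Taylor expansion yields the vanishing of the mixed partial.

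The main obstacle will be the infinitesimal analysis in $(3) \implies (6)$. Unlike $(2)$, which gives a universal identity $ac = bd$ and yields a pointwise constraint immediately, $(3)$ is only a conditional statement applicable to balanced configurations, so the proof requires first verifying that the Jacobian of $(a + b - c - d,\, a - b)$ with respect to $(\xi,\eta)$ is nondegenerate at the trivialized origin, solving for $(\xi,\eta)$ as smooth functions of $\varepsilon$, and then Taylor-expanding the residual $c - d$ to third order in $\varepsilon$ before extracting the mixed partial of $\log h$. All remaining implications are routine by comparison.
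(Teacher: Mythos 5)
Your architecture is genuinely different from the paper's, which proves a single cycle $(1)\Rightarrow(2)\Rightarrow(3)\Rightarrow(4)\Rightarrow(3)\Rightarrow(5)\Rightarrow(6)\Rightarrow(1)$ and reaches the holonomy condition from the cutting condition $(3)$ by a delicate geometric containment argument. Your hub-and-spoke plan avoids that: the anchor $(1)\Leftrightarrow(6)$, the forward spokes from $(1)$, and the spokes $(5)\Rightarrow(6)$ (via Lemma \ref{thm:dfw-loop-taylor}), $(2)\Rightarrow(6)$ (essentially the content of Theorem \ref{thm:dfw-geom-quantitative}) and $(3)\Rightarrow(6)$ are all sound. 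For the last of these, the Jacobian of $(a+b-c-d,\,a-b)$ with respect to $(\xi,\eta)$ is triangular with nonzero diagonal entries (each being twice a slice volume), and the residual $c-d$ at the balanced configuration does come out proportional to $\partial^2\log h/\partial x_i\partial x_j(p)$ rather than to $\partial^2 h/\partial x_i\partial x_j(p)$, so the extracted invariant is the correct one.

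The genuine gap is $(4)\Rightarrow(2)$. Iterating the equal-volume bisections of $(4)$ over the sub-boxes of $K$ does \emph{not} produce a dense product grid of cuts of $K$ for which $ac=bd$ can be ``read off'': the leaf of $\fol_i$ that volume-bisects a sub-box depends on the sub-box and not only on the direction, so a priori the $x_i$-bisecting leaves of, say, the upper-left and lower-left quarters of $K$ are different, and the quadtree of bisection points is not a grid. The only cuts for which $(4)$ immediately certifies $ac=bd$ are the equal four-way subdivisions themselves. The step can be repaired, but only by a bootstrapping induction: writing $G(\xi,\eta)$ for the volume of $K\cap\{x_i\le\xi,\ x_j\le\eta\}$, the first application of $(4)$ forces the equal cut to sit at the marginal medians and yields $G(\xi_{1/2},\eta_{1/2})=V/4$; that identity is precisely what forces the $\fol_j$-median of the half-box $K\cap\{x_i\le\xi_{1/2}\}$ to coincide with $\eta_{1/2}$, so that the next level of cuts aligns, and only after this inductive alignment do you get $G(\xi,\eta_s)=s\,G(\xi,\eta_1)$ on a dense set and conclude by monotonicity and continuity. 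None of this is in your sketch. The paper sidesteps the issue by proving $(4)\Rightarrow(3)$ instead: when $a+b=c+d$ and $a=b$, strict monotonicity of volume pins the given cuts $F,G$ to the equal four-way subdivision supplied by $(4)$, whence $a=b=c=d$. Either adopt that route (feeding $(3)$ into your $(3)\Rightarrow(6)$ spoke) or supply the alignment induction explicitly.
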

\begin{proof}
  To prove \ref{thm:dfw-geom:taba} from \ref{thm:dfw-geom:triv},
    carry out direct computations of volumes of $A,B,C,D$ inside a
    $\web_\Omega$-adapted coordinate system $(x_1,\ldots,x_n)$ in which the
    volume form $\Omega$ becomes $dx_1\wedge dx_2\wedge\cdots\wedge dx_n$.
  Now, condition \ref{thm:dfw-geom:cut}
    follows from \ref{thm:dfw-geom:taba} by elementary algebra.

  In order to deduce \ref{thm:dfw-geom:split} from \ref{thm:dfw-geom:cut}
    we proceed by induction on $k$. The base case $k=1$ is covered by the
    intermediate value theorem; if $K=\ival{a,b}$ for $a,b\in\Rb{n}$ in some
    fixed $\web_\Omega$-adapted coordinate system $(x_1,\ldots,x_n)$, then we
    apply the theorem to the continuous function $t\mapsto
    \int_{\ival{a,b_t}}\Omega$ with $b_t = (b_1,\ldots,b_{i-1},t,b_{i+1},
    \ldots,b_n)$ to find a subregion with half the $\Omega$-volume of $K$,
    where we treat $[a,b_t]$ as a nonempty oriented chain according to
    Definition \ref{def:dfw-region}.

    Assume the region $K$ is subdivided into $2^k$ subregions $K_j$
    of equal $\Omega$-volumes by the open subsets of leaves
    $F_i\in\fol_i$ for $i=1,\ldots,k$ and $j=1,\ldots,2^k$. Pick one of the
    subregions $K_{j_0}$ and a leaf $F_{k+1}$ of $\fol_{k+1}$ subdividing each
    region $K_j$ into two subregions $A_j, B_j$ with $\Omega$-volumes $a_j,b_j$
    in such a way, that $a_{j_0}=b_{j_0}$. Since for each region $K_j$ the
    equality $a_j + b_j = a_{j_0} + b_{j_0}$ holds, we obtain
    $a_j=b_j=a_{j_0}=b_{j_0}$ for every subregion $K_j$ adjacent to $K_{j_0}$
    as a consequence of $\ref{thm:dfw-geom:cut}$. Apply the above reasoning
    inductively to arrive at $a_i=b_j$ for $i,j=1,\ldots,2^k$, thereby
    finishing the induction step and concluding the proof of
    \ref{thm:dfw-geom:split}.

\begin{figure}
  \centering
  \def\svgwidth{\textwidth}
\begingroup%
  \makeatletter%
  \providecommand\color[2][]{%
    \errmessage{(Inkscape) Color is used for the text in Inkscape, but the package 'color.sty' is not loaded}%
    \renewcommand\color[2][]{}%
  }%
  \providecommand\transparent[1]{%
    \errmessage{(Inkscape) Transparency is used (non-zero) for the text in Inkscape, but the package 'transparent.sty' is not loaded}%
    \renewcommand\transparent[1]{}%
  }%
  \providecommand\rotatebox[2]{#2}%
  \newcommand*\fsize{\dimexpr\f@size pt\relax}%
  \newcommand*\lineheight[1]{\fontsize{\fsize}{#1\fsize}\selectfont}%
  \ifx\svgwidth\undefined%
    \setlength{\unitlength}{481.88976378bp}%
    \ifx\svgscale\undefined%
      \relax%
    \else%
      \setlength{\unitlength}{\unitlength * \real{\svgscale}}%
    \fi%
  \else%
    \setlength{\unitlength}{\svgwidth}%
  \fi%
  \global\let\svgwidth\undefined%
  \global\let\svgscale\undefined%
  \makeatother%
  \begin{picture}(1,0.47058824)%
    \lineheight{1}%
    \setlength\tabcolsep{0pt}%
    \put(0,0){\includegraphics[width=\unitlength,page=1]{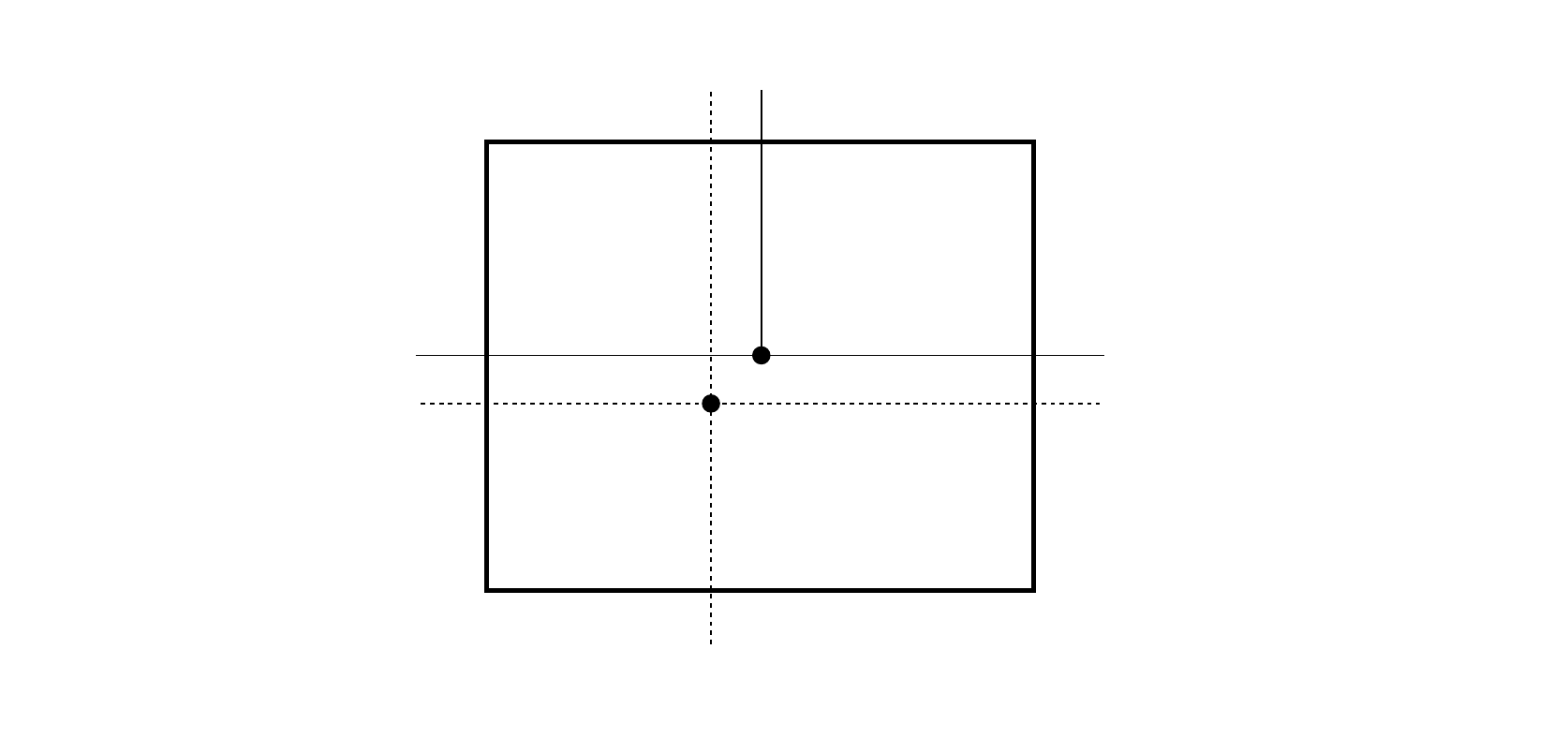}}%
    \put(0.32356641,0.34445363){\makebox(0,0)[lt]{\lineheight{1.25}\smash{\begin{tabular}[t]{l}$A$\end{tabular}}}}%
    \put(0.6170574,0.34445363){\makebox(0,0)[lt]{\lineheight{1.25}\smash{\begin{tabular}[t]{l}$B$\end{tabular}}}}%
    \put(0.61672808,0.1151608){\makebox(0,0)[lt]{\lineheight{1.25}\smash{\begin{tabular}[t]{l}$C$\end{tabular}}}}%
    \put(0.67555672,0.25992528){\makebox(0,0)[lt]{\lineheight{1.25}\smash{\begin{tabular}[t]{l}$F_i$\end{tabular}}}}%
    \put(0.49609424,0.40033343){\makebox(0,0)[lt]{\lineheight{1.25}\smash{\begin{tabular}[t]{l}$F_{k+1}$\end{tabular}}}}%
    \put(0.49803866,0.2640312){\makebox(0,0)[lt]{\lineheight{1.25}\smash{\begin{tabular}[t]{l}$p$\end{tabular}}}}%
    \put(0.42337171,0.18338574){\makebox(0,0)[lt]{\lineheight{1.25}\smash{\begin{tabular}[t]{l}$q$\end{tabular}}}}%
    \put(0.26020961,0.1805454){\makebox(0,0)[lt]{\lineheight{1.25}\smash{\begin{tabular}[t]{l}$G_i$\end{tabular}}}}%
    \put(0.37032832,0.05771453){\makebox(0,0)[lt]{\lineheight{1.25}\smash{\begin{tabular}[t]{l}$G_{k+1}$\end{tabular}}}}%
  \end{picture}%
\endgroup%

  \caption{\footnotesize If $p\neq q$, then there is a strict containment
    relation between some subregions.}
  \label{fig:dfw-geom-pq}
\end{figure}

  We will now prove the converse implication from \ref{thm:dfw-geom:split} to
    \ref{thm:dfw-geom:cut}. Fix a $\web_\Omega$-adapted coordinate system
    $(x_1,\ldots,x_n)$ on some open subset $U$ containing $K$ and assume that
    locally $\fol,\gol$ are generated by $\ker dx_i,\ker dx_j$ respectively, so
    that $F\subseteq\set{x_i=p_i}$ and $G\subseteq\set{x_j=p_j}$. By our
    assumption, there exist two other open subsets of leaves
    $F'\subseteq\set{x_i=q_i}\in\fol$, $G'\subseteq\set{x_j=q_j}\in\gol$
    subdividing $K$ into subregions $E_1,E_2,E_3,E_4$ with $\Omega$-volumes all
    equal to $e\in\mathbb{R}_+$. Note that $e=\frac{1}{4}(a+b+c+d)$. Since
    $a+b=c+d =2e$, the only possibility is that $p_i=q_i$. Since $a=b=e$, we
    similarly get $p_j=q_j$. Hence $F,F'\subseteq\set{x_i=p_i}$ and
    $G,G'\subseteq\set{x_j=p_j}$, which imply $A=E_1$, $B=E_2$, $C=E_3$ and
    $D=E_4$ after a permutation of indices (Figure \ref{fig:dfw-geom-pq}).
    The subregions $A,B,C,D$ have equal $\Omega$-volumes as intended.

  To show that the ability to cut regions as in \ref{thm:dfw-geom:cut}
    trivializes the holonomy \ref{thm:dfw-geom:hol}, pick a point $p\in M$, a
    $\web_\Omega$-adapted coordinate system centered at $p$ on an open set $W$,
    an open neighbourhood $V\subseteq W$ of $p$ of the coordinate form
    $(-\varepsilon,\varepsilon)^n$ in which every
    volume-preserving reflection is well-defined and unique in the sense
    of Lemma \ref{thm:dfw-refl}, and a neighbourhood $U\subseteq V$ of $p$ such
    that every sufficiently long composition of reflection map-germs
    $r_{p;\fol}$ for $\fol\in\Fols(\web_\Omega)$ maps $U$ into $V$. Our goal is
    to show that for fixed $\fol,\gol\in\Fols(\web_\Omega)$, $\fol\neq\gol$ the
    equality $\ell_{p;\fol,\gol}(q)=q$ holds for each $q\in U$. It is
    sufficient to prove this for points $q$ not lying on any plaque of
    $\web_\Omega$ passing through $p$; the result extends to the whole $U$ by
    the continuity of $\ell_{p;\fol,\gol}$.

    Let $q' = r_{p;\fol}(q)$. The regions bounded by leaves $A=\ival{p,q}$ and
    $B=\ival{p,q'}$ are well-defined and contained in $V$ (since $V$ is a
    coordinate cube), adjacent along the plaque $F_p\in\fol$ crossing $p$ and
    having the same $\Omega$-volumes $\varepsilon>0$. Denote by $G_p$ the
    plaque of $\gol$ crossing $p$. We will now find a plaque $\tilde{G}\in\gol$
    such that the region $R$ adjacent to $A\cup B$ along $G_p$ bounded in the
    $x_2$-direction by $\tilde{G}$ has $\Omega$-volume at least $2\varepsilon$.

    One of the plaques $G_u,G_v\in\gol$ crossing the points $u=r_{p;\gol}(q)$
    and $v=r_{p;\gol}(q')$ must satisfy this criterion. To see this, denote
    the corresponding regions by $R_u, R_v$ and suppose that either of their
    $\Omega$-volumes $r_u, r_v$ is less than $2\varepsilon$ (we can assume
    that $r_u\leq r_v$ without loss of generality). The plaque $F_p$
    subdivides $R_u$ into subregions $S_u = \ival{p,u}, T_u$ and $R_v$ into
    $S_v, T_v=\ival{p,v}$. Note that $\Omega$-volumes $s_u, t_v$ of both
    $S_u$ and $T_v$ are $\varepsilon$ by Definition \ref{def:dfw-reflections}.
    Therefore, the inequality $r_u\leq2\varepsilon$ implies that the
    $\Omega$-volume $t_u$ of $T_u$ is not greater than $\varepsilon$. Since the
    geometry of this setup forces either $T_u\subseteq T_v$ or $T_v\subseteq
    T_u$, by $t_u\leq t_v = \varepsilon$ we obtain that $T_u\subseteq T_v$,
    hence also $S_u \subseteq S_v$. This leads through $\varepsilon = s_u \leq
    s_v$ to $r_v = s_v + t_v = s_v + \varepsilon \geq 2\varepsilon$, proving
    that in this case we can put $\tilde{G} = G_v$, so that $R=R_v$.

    Now, by picking a suitable plaque $G\in\gol$ we can shrink $R$ to a
    region $E$ adjacent to $A\cup B$ along $F_p$ with $\Omega$-volume $e$ equal
    to exactly $2\varepsilon$, which we subsequently subdivide by means of
    $G_p$ into subregions $C, D$ adjacent to $B, A$ respectively with the
    corresponding $\Omega$-volumes $c, d$. Since $a=b=\varepsilon$ and
    $c+d=2\varepsilon$, by our assumption $\ref{thm:dfw-geom:cut}$ we obtain
    $c=d=\varepsilon$. Since $A\cup B\cup C\cup D\subseteq V$ and reflections
    are well-defined and unique on $V$ in the sense of Lemma
    \ref{thm:dfw-refl}, we obtain that $C=\ival{p,q''}$ for
    $q''=r_{p;\gol}(q')$, $D=\ival{p,q^{(3)}}$ for $q^{(3)}=r_{p;\fol}(q'')$
    and finally $A=\ival{p,q}=\ival{p,q^{(4)}}$ for
    $q^{(4)}=r_{p;\gol}(q^{(3)})$, hence $q=q^{(4)}=\ell_{p;\fol,\gol}(q)$ as
    claimed.

  Let us assume \ref{thm:dfw-geom:hol} that the reflection holonomy at every
  point $p\in M$ is trivial.
    Since the loops $\ell_{p;\fol,\gol}$ for each $p\in M$ and
    $\fol,\gol\in\Fols(\web_\Omega)$ are identities, all coefficients in their
    Taylor expansions of order higher than $1$ vanish. The coefficients
    $\kappa_{ij}(p)$ of $\kcurv(\web_\Omega) = \sum_{i\neq j}
    \kappa_{ij}\,dx_idx_j$ all occur as order $3$ coefficients inside the
    expansion of some loop $\ell_{p;\fol,\gol}$ by Lemma
    \ref{thm:dfw-loop-taylor}. Hence, the nonuniformity tensor vanishes
    identically, proving
    \ref{thm:dfw-geom:curv}.

  The last implication from \ref{thm:dfw-geom:curv} to \ref{thm:dfw-geom:triv}
    is exactly the statement of Theorem \ref{thm:dfw-logh}.
\end{proof}

\begin{rem}
  In the above theorem, the equivalence of web triviality with purely local
  conditions $\ref{thm:dfw-geom:hol}$ and $\ref{thm:dfw-geom:curv}$
  allows us to substitute conditions $\ref{thm:dfw-geom:taba}$,
  $\ref{thm:dfw-geom:cut}$, $\ref{thm:dfw-geom:split}$ with their
  localized versions $\ref{thm:dfw-geom:taba}'$,
  $\ref{thm:dfw-geom:cut}'$, $\ref{thm:dfw-geom:split}'$ in which the
  phrase ''\emph{any region bounded by leaves $K$}'' is replaced by
  ''\emph{any sufficiently small region bounded by leaves $K$}''. Validity of
  this assertion becomes more clear after a careful investigation of the
  proofs of the implications between $\ref{thm:dfw-geom:taba}$,
  $\ref{thm:dfw-geom:cut}$, $\ref{thm:dfw-geom:split}$ and
  $\ref{thm:dfw-geom:hol}$; they all can be carried over
  almost verbatim to their localized form.
\end{rem}

We will now give a quantitative version of condition $\ref{thm:dfw-geom:taba}$
of Theorem \ref{thm:dfw-geom}, which gives information about the nonuniformity
tensor directly from non-fulfillment of Tabachnikov's triviality condition
\cite[Fig. 2.]{2-webs}.

\begin{thm}
  \label{thm:dfw-geom-quantitative}
  Let $\web_\Omega$ be a codimension-$1$ divergence-free $n$-web on a smooth
  manifold $M$ and let $p\in M$. Given sufficiently small $u,v\in\Rb{n}$
  defining a region bounded by leaves $K=\ival{u,v}$ with
  $p\in\operatorname{Int}K$ inside a fixed $\web_\Omega$-adapted coordinate
  system $(x_1,\ldots,x_n)$, and two plaques $F=\set{x_i=p_i}\in\fol$,
  $G=\set{x_j=p_j}\in\gol$ of $\fol,\gol\in\Fols(\web_\Omega)$ which subdivide
  $K$ into four subregions $A,B,C,D$ of the form
  \begin{equation}
    \begin{aligned}
      A &= K\cap\set{x_i\leq p_i}\cap\set{x_j\geq p_j}, &
      B &= K\cap\set{x_i\geq p_i}\cap\set{x_j\geq p_j}, \\
      D &= K\cap\set{x_i\leq p_i}\cap\set{x_j\leq p_j}, &
      C &= K\cap\set{x_i\geq p_i}\cap\set{x_j\leq p_j}, \\
    \end{aligned}
  \end{equation}
  the $\Omega$-volumes $a,b,c,d$ of the respective subregions $A,B,C,D$ satisfy
  \begin{equation}
    \label{eq:dfw-geom-pointwise-stmt}
    ac<bd\quad\text{if}\quad \kappa_{ij}>0, \qquad
    ac>bd\quad\text{if}\quad \kappa_{ij}<0,
  \end{equation}
  where $\kappa_{ij}$ are the $dx_i dx_j$-coefficients of the
  nonuniformity tensor $\kcurv(\web_\Omega)_{|p} = \sum_{k\neq l}
  \kappa_{kl}\,dx_k dx_l$ at point $p$.
\end{thm}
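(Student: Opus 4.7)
The plan is to obtain $ac - bd$ as an asymptotic expansion in a small scaling parameter $\epsilon$ controlling how far $u, v$ lie from $p$. Translate so that $p = 0$, and note that rescaling $\Omega$ by a positive constant multiplies each of $a, b, c, d$ by the same factor, which preserves the sign of $ac - bd$; I may therefore assume $h(0) = 1$. Introduce $\epsilon$ by setting $(u, v) = \epsilon(\tilde u, \tilde v)$ for fixed $\tilde u < 0 < \tilde v$ componentwise. The crucial structural ingredient is the factorization $h(x) = h^{\alpha}(x)\,g(x)$, where $h^{\alpha}(x) = \exp(\alpha \cdot x)$ with $\alpha_k = (\partial_k \log h)(0)$, and $g(x) = 1 + \tfrac{1}{2}\sum_{k,l} M_{kl}\,x_k x_l + O(|x|^3)$ with $M_{kl} = (\partial_k \partial_l \log h)(0)$; in particular $M_{ij} = \kappa_{ij}$. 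Since $h^{\alpha} = \prod_k e^{\alpha_k x_k}$ is a full product of single-variable functions and each region is a coordinate box, a Fubini argument of the sort used in the proof of Theorem \ref{thm:dfw-logh} yields $a^{\alpha}c^{\alpha} = b^{\alpha}d^{\alpha}$ identically, where $a^{\alpha}, b^{\alpha}, c^{\alpha}, d^{\alpha}$ denote the corresponding $h^{\alpha}$-volumes.

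Writing $a = a^{\alpha} + \Delta_A$ and similarly for the other volumes, with $\Delta_X = \tfrac{1}{2}\sum_{k,l} M_{kl}\int_X h^{\alpha}(x)\,x_k x_l \, dx + O(\epsilon^{n+3})$, the cancellation $a^{\alpha}c^{\alpha} = b^{\alpha}d^{\alpha}$ yields
\[
  ac - bd = a^{\alpha}\Delta_C + c^{\alpha}\Delta_A - b^{\alpha}\Delta_D - d^{\alpha}\Delta_B + O(\epsilon^{2n+3}).
\]
At leading order, $h^{\alpha} = 1 + O(\epsilon)$ inside each region and $a^{\alpha} = V_A + O(\epsilon^{n+1})$ with $V_X$ the Euclidean volume, reducing the above to
\[
  ac - bd = \tfrac{1}{2}\sum_{k,l} M_{kl}\bigl(V_A I^{kl}_C + V_C I^{kl}_A - V_B I^{kl}_D - V_D I^{kl}_B\bigr) + O(\epsilon^{2n+3}),
\]
where $I^{kl}_X = \int_X x_k x_l \, dx$. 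A straightforward case analysis on $(k, l)$, using that each region factorizes into a product of intervals, shows that the bracket vanishes unless $\{k,l\} = \{i,j\}$: for $k, l \notin \{i, j\}$ the bracket is an $X$-independent multiple of $V_A V_C - V_B V_D = 0$; for mixed pairs $(i, m), (m, i)$ or $(j, m), (m, j)$ with $m \notin \{i, j\}$, the key identity $\bar{x}_i^A + \bar{x}_i^C = \bar{x}_i^B + \bar{x}_i^D$ (both equal $(v_i - |u_i|)/2$), together with the $X$-independence of $\bar{x}_m$, again reduces the bracket to a multiple of $V_A V_C - V_B V_D$; the diagonals $(i, i)$ and $(j, j)$ are handled similarly using the second-moment identity $V_A V_C \cdot (u_i^2 + v_i^2)/3 = V_B V_D \cdot (u_i^2 + v_i^2)/3$. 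The surviving pair $\{(i, j), (j, i)\}$ gives by direct computation
\[
  V_A I^{ij}_C + V_C I^{ij}_A - V_B I^{ij}_D - V_D I^{ij}_B = -\tfrac{1}{4}V_A V_C (|u_i| + v_i)(|u_j| + v_j),
\]
so that, using $M_{ij} = M_{ji} = \kappa_{ij}$, one obtains $ac - bd = -\tfrac{\kappa_{ij}}{4}\,V_A V_C\,(|u_i|+v_i)(|u_j|+v_j) + O(\epsilon^{2n+3})$.

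Since $V_A V_C = |u_i|\,v_i\,|u_j|\,v_j\prod_{k\neq i,j}(v_k - u_k)^2 > 0$ by the interiority of $p$ in $K$, and $(|u_i|+v_i)(|u_j|+v_j) > 0$ likewise, the leading term dominates the remainder for sufficiently small $\epsilon$, forcing the sign relation (\ref{eq:dfw-geom-pointwise-stmt}). The main obstacle is the bookkeeping of second-order moments across the four regions; the factorization $h = h^{\alpha} g$ is what keeps the computation manageable, because it entirely removes the $(\alpha \cdot x)^2$-type contributions that a direct Taylor expansion of $h$ would have produced and that would otherwise have to be tracked and cancelled against the cross terms $\Delta_A\Delta_C - \Delta_B\Delta_D$.
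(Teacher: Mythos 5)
Your computation is algebraically sound along any fixed ray $(u,v)=\epsilon(\tilde u,\tilde v)$: the factorization $h=h^{\alpha}g$, the exact identity $a^{\alpha}c^{\alpha}=b^{\alpha}d^{\alpha}$ for the product density, and the moment bookkeeping that kills every pair except $\{i,j\}$ are all correct, and your leading term $-\tfrac{\kappa_{ij}}{4}V_AV_C(|u_i|+v_i)(|u_j|+v_j)$ agrees with the paper's planar computation. Your route is also genuinely different: the paper first normalizes coordinates (Lemma \ref{thm:dfw-normal-coords}) so that in the planar case $h\equiv 1$ on the two axes, forcing $h-1=(\kappa+g)xy$ with $g\to 0$, and then reduces general $n$ to the planar case by integrating $h$ over the fibers of the projection onto the $(x_i,x_j)$-plane; you work directly in $n$ variables. (A side remark: if $h(0)<0$ you must rescale by a negative constant, which is harmless since both $\kappa_{ij}$ and the sign of $ac-bd$ are insensitive to $\Omega\mapsto c\Omega$, $c\neq 0$.)

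The genuine gap is the final step, ``the leading term dominates the remainder for sufficiently small $\epsilon$.'' The theorem asserts the sign relation for \emph{all} sufficiently small $u,v$ with $p\in\operatorname{Int}K$, i.e.\ uniformly on a fixed neighbourhood, whereas your comparison is only ray-by-ray. Concretely, your main term has size $|\kappa_{ij}|\,V_AV_C\,(v_i-u_i)(v_j-u_j)$, while the error inherited from the crude $O(|x|^3)$ remainder of $\log h$ is only bounded by $C\,V_AV_C\,\delta^3$ with $\delta=\max(|u|,|v|)$; when $(v_i-u_i)(v_j-u_j)\ll\delta^3$ (say $u_i,v_i,u_j,v_j$ of order $\delta^2$ while the remaining coordinates are of order $\delta$) the claimed domination fails, so the threshold $\epsilon_0(\tilde u,\tilde v)$ degenerates as the box flattens toward the hyperplanes $\{x_i=0\}$, $\{x_j=0\}$ and no single neighbourhood of $0$ works. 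The fix lives inside your own framework: the combination $a^{\alpha}\smint{C}h^{\alpha}f+c^{\alpha}\smint{A}h^{\alpha}f-b^{\alpha}\smint{D}h^{\alpha}f-d^{\alpha}\smint{B}h^{\alpha}f$ vanishes identically for every $f$ that is independent of $x_i$ or of $x_j$ (the four terms pair off because $A$ and $B$, resp.\ $C$ and $D$, have the same projection along the $x_i$-axis), so you may replace $g$ by its doubly centered part $x_ix_j\hat g(x)$ with $\hat g$ continuous and $\hat g(0)=\kappa_{ij}$; the error then becomes $o(1)\cdot V_AV_C(v_i-u_i)(v_j-u_j)$ uniformly, which is precisely the role that the normalization $h(x,0)=h(0,y)=1$ plays in the paper's argument.
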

\begin{proof}
  We first prove the theorem in the special case of planar divergence-free
  $2$-webs $\web_\Omega$ with a coordinate system $(x,y)$ centered at $p\in M$
  which is normalized in the sense of Lemma \ref{thm:dfw-normal-coords}. In
  such coordinates the volume form $\Omega = h(x,y)\,dx\wedge dy$ satisfies
  $h(x,0)=h(0,y)=1$. Denote by $\kappa$ the only nonzero coefficient of the
  nonuniformity tensor $\kcurv(\web_\Omega)_{|p} = \kappa\,dx\,dy$ at point
  $p$. In these coordinates we can express $h(x,y)$ as
  \begin{equation}
    h(x,y) = 1 + \kappa\,xy + g(x,y) xy
  \end{equation}
  for some continuous function $g$ vanishing at $0$. Hence, for fixed
  $\varepsilon>0$ and small enough $u=(u_1,u_2)\in\Rb{2}$ we have
  \begin{equation}
    \label{eq:dfw-geom-pointwise-int}
    \abs*{\int_{\ival{0,u}}\!\Omega - (u_1u_2 +\smfrac{1}{4}\kappa u_1^2u_2^2)}
      < \smfrac14\varepsilon\,u_1^2u_2^2.
  \end{equation}
  Now, choose $\varepsilon>0$ such that $\abs*{\kappa}>\varepsilon$. Suppose
  that $K=\ival{u,v}$ for some $u_1,u_2 > 0$, $v_1,v_2<0$, and that
  $u_1,u_2,v_1,v_2$ are so close to $0$ that each $(x,y)\in K$ satisfies
  $(\ref{eq:dfw-geom-pointwise-int})$ and $1+(\kappa\pm\varepsilon) xy>0$.
  Then, since $A=[v_1,0]\times[0,u_2]$, $B=[0,u_1]\times[0,u_2]$,
  $C=[0,u_1]\times[v_2,0]$ and $D=[v_1,0]\times[v_2,0]$, we obtain that the
  corresponding $\Omega$-volumes $a,b,c,d$ satisfy
  \begin{align}
    &\begin{aligned}
      &bd-ac = \smint{B}\Omega\cdot\smint{D}\Omega
      - \smint{A}\Omega\cdot\smint{C}\Omega \\
      &\hskip 1em {}>
        (u_1u_2 + \smfrac14(\kappa-\varepsilon) u_1^2u_2^2)\cdot
        (v_1v_2 + \smfrac14(\kappa-\varepsilon) v_1^2v_2^2)\\
      &\hskip 3em {}
      - (v_1u_2 + \smfrac14(\kappa-\varepsilon) v_1^2u_2^2)\cdot
        (u_1v_2 + \smfrac14(\kappa-\varepsilon) u_1^2v_2^2)\\
      &\hskip 1em {}=
        \smfrac14u_1u_2v_1v_2(u_1-v_1)(u_2-v_2)(\kappa-\varepsilon),
      \end{aligned}
    \intertext{and similarly}
      &bd-ac < \smfrac14u_1u_2v_1v_2(u_1-v_1)(u_2-v_2)(\kappa+\varepsilon).
  \end{align}
  Since $u_1u_2v_1v_2(u_1-v_1)(u_2-v_2)>0$, the sign of $bd-ac$ coincides with
  that of $\kappa$, which was to be proved in the planar case.

  The same result holds irrespective of the choice of a
  $\web_\Omega$-coordinate system, since any such set of coordinates $(x,y)$
  can be transformed into one of the above form, say $(\tilde{x},\tilde{y})$,
  by letting $\tilde{x}=\int_0^x h(t,0) dt$ and
  $\tilde{y}=h(0,0)^{-1}\cdot\int_0^y h(0,s)ds$ as in Lemma
  \ref{thm:dfw-normal-coords}. As a result of this transformation, the
  nonuniformity tensor coefficient changes from $\kappa$ to
  $\tilde{\kappa}=\kappa/h(0,0)$. It might change sign in the process, but its
  effect on the statement of the theorem is countered by the way the subregions
  $A,B,C,D$ arrange themselves in the new set of coordinates;
  note that $\smset{p:x(p)>0}=\smset{p:\tilde{x}(p)<0}$ and
  $\smset{p:y(p)>0}=\smset{p:\tilde{y}(p)>0}$ if $h(0,0)<0$ and relabel the
  regions as necessary.

  We now prove the theorem for an arbitrary codimension-$1$ divergence-free
  $n$-web $\web_\Omega$. First, express $\Omega$ in the $\web_\Omega$ adapted
  coordinate chart $(U,\varphi)$ with $\varphi=(x_1,\ldots,x_n)$ as $\Omega =
  h\,dx_1\wedge dx_2\wedge\cdots\wedge dx_n$. Then, assuming without loss of
  generality that $i=1$ and $j=2$, define a family of planar divergence-free
  $2$-webs $\web_{\alpha,\beta}$ depending continuously on parameters
  $\alpha,\beta\in\Rb{n-2}$ in the following way.
  Let $\mathcal{U}$ be the image of $U$ by the projection
  $(x_1,\ldots,x_n)\mapsto (x_1,x_2)=(x,y)$ and let $\fol,\gol$ be generated by
  $\ker dx, \ker dy$ respectively. Let $\Omega_{\alpha,\beta} =
  h_{\alpha,\beta}(x,y)\,dx\wedge dy$, where $h_{\alpha,\beta}$ denotes the
  integral of $h$ over fibers of the projection
  \begin{equation}
    \label{eq:dfw-quantitative-param}
    h_{\alpha,\beta}(x,y) =
      \IterInt{0}{1}{0}{1}
        h(\,x,y,\lambda_1(t_1),\ldots,
          \lambda_{n-2}(t_{n-2})\,)
        \ dt_{n-2}\cdots dt_1,
  \end{equation}
  with $\lambda_k(t) = \alpha_k + t(\beta_k-\alpha_k)$. The webs
  $\web_{\alpha,\beta}$ are defined as
  $\web_{\alpha,\beta}=(\mathcal{U},\fol,\gol,\Omega_{\alpha,\beta})$.

  Pick any two points $u,v\in\mathcal{U}$ and represent them as
  $u=(q^{(1)},\alpha)$ and $v=(q^{(2)},\beta)$ for $q^{(1)},q^{(2)}\in\Rb{2}$
  and $\alpha,\beta\in\Rb{n-2}$. When $\alpha_k\neq\beta_k$ for each
  $k=1,\ldots,n-2$, then a change of variables in
  $(\ref{eq:dfw-quantitative-param})$ yields
  \begin{equation}
    \label{eq:dfw-geom-pointwise-vols}
    \int_{\smash{\ival{q^{(1)},q^{(2)}}}}\Omega_{\alpha,\beta} =
    \frac1{\prod_{k=1}^{n-2}(\alpha_k-\beta_k)}
    \int_{\ival{u,v}}\Omega,
  \end{equation}
  where we treat $\ival{a,b}$ as chains in the sense given in Definition
  \ref{def:dfw-region}. On the other hand, taking $\alpha=\beta$ results in
  equality $h_{\alpha,\beta}(x,y) = h(x,y,\alpha_1,\ldots,\alpha_{n-2})$.
  Straightforward calculation confirms that the nonuniformity tensor of
  $\web_{0,0}$ at $0$ is exactly $\kappa_{ij}=\kappa_{12}$. Since
  $h_{\alpha,\beta}\in C^{\infty}(\mathcal{U})$ depends continuously on
  $\alpha,\beta\in\Rb{n-2}$, so does the coefficient $\tilde{\kappa}_{\alpha,
  \beta}$ of the nonuniformity tensor $\kcurv(\web_{\alpha,\beta})=
  \tilde{\kappa}_{\alpha,\beta}\,dx\,dy$. By continuity, for all $\alpha$,
  $\beta$ in some neighbourhood of $0$ the sign of $\kappa_{\alpha,\beta}$ is
  the same as the sign of $\kappa_{ij}$. Moreover, regions $A,B,C,D$ are all of
  the form $\ival{p,u_k}$ for points $u_k\in\Rb{n}$, $k=1,2,3,4$, differing
  only at the $i^{\text{th}}$ and $j^{\text{th}}$ coordinate. Hence, by the
  planar case and equality $(\ref{eq:dfw-geom-pointwise-vols})$, the
  inequalities $(\ref{eq:dfw-geom-pointwise-stmt})$ hold for regions
  $K=\ival{u,v}$ with sufficiently small $u,v\in\Rb{n}$.
\end{proof}

\subsection{Geometric conditions in higher codimensions}

The following definition outlines one of the possible extensions of Definitions
\ref{def:dfw-region} and \ref{def:dfw-adj} to an arbitrary codimension (Figure
\ref{fig:dfw-region-hicodim}).

\begin{defn}
  Let $\web=(M, \fol_1,\ldots,\fol_n)$ be a regular $n$-web on a smooth
  $m$-dimensional manifold $M$. A compact set $C\subseteq M$ which
  takes the form $C_1\times C_2\times\cdots\times C_n$ for connected closed
  subsets $C_i\subseteq\Rb{c_i}$, $c_i=\codim\fol_i$, in some $\web$-adapted
  chart will be called a \emph{region bounded by leaves of $\web$}. Two
  such regions $A,B$ are \emph{adjacent along $\fol\in\Fols(\web)$} if $A\cup
  B$ is also a region bounded by the leaves of $\web$ and $A\cap B$ is a
  closed fragment of a smooth hypersurface $F\subseteq M$ satisfying
  $T\fol\subseteq TF$. In this case we say that \emph{$F$ subdivides $A\cup B$}
  into subregions $A,B$.
\end{defn}

The volume-preserving loops along $\fol,\gol\in\Fols(\web_\Omega)$ of a
codimension-$1$ divergence-free $n$-web $\web_\Omega$ (Definition
\ref{def:dfw-loops}) allow us to probe the coefficients of the nonuniformity
tensor $\kcurv(\web_\Omega)$ in the directions transverse to $\fol, \gol$ by
means of their Taylor expansions (Lemma \ref{thm:dfw-loop-taylor}). If
$\web_\Omega$ has arbitrary codimension, we can still consider
volume-preserving loops coming from local webs $\web_{\varphi;\Omega}$ with
foliations formed by level-sets of individual coordinate functions
$(x_1,\ldots,x_m)$ inside a $\web_\Omega$-adapted chart $(U,\varphi)$ centered
at $p\in M$. As the natural connection $\Theta$ of $\web_{\varphi;\Omega}$ with
Ricci tensor $\Rc$ is a $\web_\Omega$-connection on $U$, Theorem
\ref{thm:dfw-ricci-general} implies that $\kcurv(\web_\Omega) =
\mathrm{pr}_O(\Rc) = \mathrm{pr}_O(\kcurv(\web_{\varphi;\Omega}))$. With that
in mind, we are able to recover $\kcurv(\web_\Omega)$ from loops through the
foliations generated by $\ker dx_i$, $\ker dx_j$ for $i\not\sim j$, where both
$i\sim j$ and $i\not\sim j$ are defined in $(\ref{eq:dfw-sim})$. This motivates
the following definition of volume preserving holonomy.

\begin{defn}
  Let $M$ be a $m$-dimensional smooth manifold and let
  $\web_\Omega=(M,\fol_1,\ldots,\fol_n,\Omega)$ be a divergence-free $n$-web.
  Consider a $\web_\Omega$-adapted chart $(U,\varphi)$ centered at $p\in M$
  with coordinates $\varphi=(x_1,\ldots,x_m)$ and the induced local
  codimension-$1$ divergence-free $m$-web $\web_{\varphi;\Omega}$ whose
  defining foliations $\gol_i$ satisfy $T\gol_i=\ker dx_i$ for $i=1,\ldots,m$.
  \begin{enumerate}[label=$(\arabic*)$, ref=(\arabic*)]
    \item The smallest normal subgroup $\mathcal{H}_{p;\varphi}$ generated by
      all volume-preserving loops $\ell_{p;\gol_i,\gol_j}$ of
      $\web_{\varphi;\Omega}$ with $i\not\sim j$ inside the group
      $\mathcal{R}_{p;\varphi}$ of all volume-preserving reflections will be
      called the \emph{(volume-preserving) reflection holonomy group of
      $\web_\Omega$ at $p$ in the chart $(U,\varphi)$}.
    \item Let $\mathcal{H}_p$ be the group generated by the union of all
      $\mathcal{H}_{p;\varphi}$, where $\varphi$ runs over all
      $\web_\Omega$-adapted coordinate charts $(U,\varphi)$ centered at some
      fixed point $p\in M$, and let $(V,\psi)$ be a fixed $\web_\Omega$-adapted
      chart. Define $\maps{\psi_*}{\mathcal{H}_p}{\mathcal{H}_{\psi(p)}}$ by
      $\psi_*(f)=\psi\circ f\circ\psi^{-1}$. The conjugacy class of
      $\psi_*\mathcal{H}_p$ inside the group of diffeomorphism-germs
      $\mathrm{Diff}(\Rb{m},0)$ is called the \emph{(volume-preserving)
      reflection holonomy of $\web_\Omega$ at $p$}.
  \end{enumerate}
\end{defn}

\begin{figure}
  \centering
  \def\svgwidth{\textwidth}
\begingroup%
  \makeatletter%
  \providecommand\color[2][]{%
    \errmessage{(Inkscape) Color is used for the text in Inkscape, but the package 'color.sty' is not loaded}%
    \renewcommand\color[2][]{}%
  }%
  \providecommand\transparent[1]{%
    \errmessage{(Inkscape) Transparency is used (non-zero) for the text in Inkscape, but the package 'transparent.sty' is not loaded}%
    \renewcommand\transparent[1]{}%
  }%
  \providecommand\rotatebox[2]{#2}%
  \newcommand*\fsize{\dimexpr\f@size pt\relax}%
  \newcommand*\lineheight[1]{\fontsize{\fsize}{#1\fsize}\selectfont}%
  \ifx\svgwidth\undefined%
    \setlength{\unitlength}{510.23622047bp}%
    \ifx\svgscale\undefined%
      \relax%
    \else%
      \setlength{\unitlength}{\unitlength * \real{\svgscale}}%
    \fi%
  \else%
    \setlength{\unitlength}{\svgwidth}%
  \fi%
  \global\let\svgwidth\undefined%
  \global\let\svgscale\undefined%
  \makeatother%
  \begin{picture}(1,0.33333333)%
    \lineheight{1}%
    \setlength\tabcolsep{0pt}%
    \put(0,0){\includegraphics[width=\unitlength,page=1]{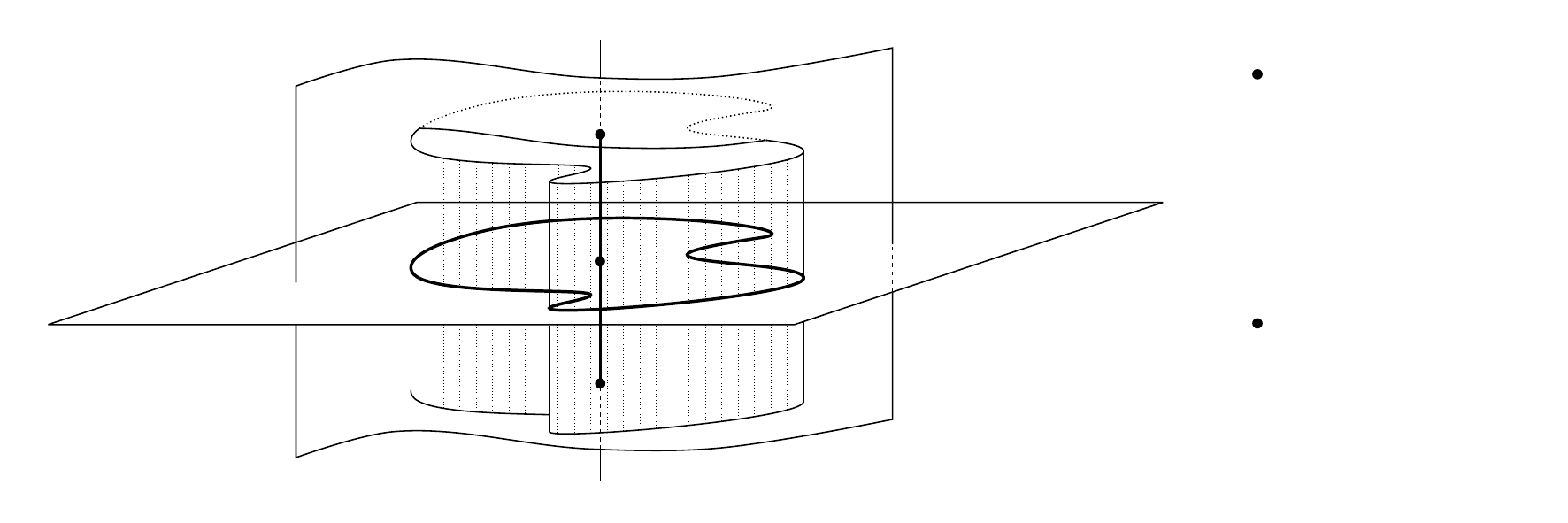}}%
    \put(0.66138204,0.14728355){\makebox(0,0)[lt]{\lineheight{1.25}\smash{\begin{tabular}[t]{l}$\fol$\end{tabular}}}}%
    \put(0.39564841,0.30289409){\makebox(0,0)[lt]{\lineheight{1.25}\smash{\begin{tabular}[t]{l}$\gol$\end{tabular}}}}%
    \put(0,0){\includegraphics[width=\unitlength,page=2]{region.pdf}}%
    \put(0.89345356,0.02644077){\makebox(0,0)[lt]{\lineheight{1.25}\smash{\begin{tabular}[t]{l}$C_2$\end{tabular}}}}%
    \put(0.83472705,0.27802709){\makebox(0,0)[lt]{\lineheight{1.25}\smash{\begin{tabular}[t]{l}$C_1$\end{tabular}}}}%
    \put(0.60258276,0.17539459){\makebox(0,0)[lt]{\lineheight{1.25}\smash{\begin{tabular}[t]{l}$F$\end{tabular}}}}%
    \put(0.52984678,0.25765735){\makebox(0,0)[lt]{\lineheight{1.25}\smash{\begin{tabular}[t]{l}$H$\end{tabular}}}}%
    \put(0,0){\includegraphics[width=\unitlength,page=3]{region.pdf}}%
    \put(0.46825231,0.08530418){\makebox(0,0)[lt]{\lineheight{1.25}\smash{\begin{tabular}[t]{l}$K$\end{tabular}}}}%
  \end{picture}%
\endgroup%

  \caption{\footnotesize A region $K$ bounded by the leaves of the web
    $\web=(\mathbb{R}^3,\fol,\gol)$, where $\codim\fol=1$ and $\codim\gol=2$.
    The set $K$ is of the form $C_1\times C_2$ in $\web$-adapted coordinates.
    Hypersurfaces $F$ and $H$ satisfying $T\fol=TF$ and
    $T\gol\subseteq TH$ subdivide this region into subregions.}
  \label{fig:dfw-region-hicodim}
\end{figure}

These straightforward generalizations allow us to carry over the statement of
Theorem \ref{thm:dfw-geom} almost verbatim to the higher-codimensional case.

\begin{thm}
  \label{thm:dfw-geom-hicodim}
  Let $M$ be a $m$-dimensional smooth manifold and let
  $\web_\Omega=(M,\fol_1,\ldots,\fol_n,\Omega)$ be a divergence-free $n$-web.
  The following conditions are equivalent.
  \begin{enumerate}[label=$(\arabic*)$, ref=(\arabic*)]
    \item\label{thm:dfw-geom-hicodim:triv}
      The web $\web_\Omega$ is locally trivial.
    \item\label{thm:dfw-geom-hicodim:taba}
      For each pair $\fol,\gol\in\Fols(\web_\Omega)$ of two different
      foliations of $M$, any region bounded by leaves $K$, and any two
      hypersurfaces $F,G$ satisfying $T\fol\subseteq TF$, $T\gol\subseteq TG$
      which subdivide $K$ into four subregions $A,B,C,D$ with $(A\cup
      B)\cap(C\cup D)\subseteq F$ and $(A\cup D)\cap(B\cup C)\subseteq G$, the
      respective $\Omega$-volumes $a,b,c,d$ of $A,B,C,D$ satisfy
      \begin{equation}
        ac=bd.
      \end{equation}
    \item\label{thm:dfw-geom-hicodim:cut}
      For each pair $\fol,\gol\in\Fols(\web_\Omega)$ of two different
      foliations of $M$, any region bounded by leaves $K$, and any two
      hypersurfaces $F,G$ satisfying $T\fol\subseteq TF$, $T\gol\subseteq TG$
      which subdivide $K$ into four subregions $A,B,C,D$ with $(A\cup
      B)\cap(C\cup D)\subseteq F$ and $(A\cup D)\cap(B\cup C)\subseteq G$ in
      such a way that the $\Omega$-volumes $a,b,c,d$ of $A,B,C,D$ satisfy $a+b
      = c+d$, the equality $a=b$ implies that $a=b=c=d$.
    \item\label{thm:dfw-geom-hicodim:hol}
      The volume-preserving reflection holonomy of $\web_\Omega$ at each
      point $p\in M$ is trivial.
    \item\label{thm:dfw-geom-hicodim:curv}
      The nonuniformity tensor $\kcurv(\web_\Omega)$ vanishes identically.
  \end{enumerate}
\end{thm}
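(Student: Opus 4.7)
My plan is to establish the chain (1) $\Rightarrow$ (2) $\Rightarrow$ (3) $\Rightarrow$ (5) $\Rightarrow$ (1) together with the bi-implication (4) $\Leftrightarrow$ (5), reducing everywhere to the codimension-$1$ case already settled in Theorem \ref{thm:dfw-geom}. The main tool is the auxiliary codimension-$1$ web $\web_{\varphi;\Omega}$ associated with any $\web_\Omega$-adapted chart $(U,\varphi)$, $\varphi=(x_1,\ldots,x_m)$, whose constituent foliations $\gol_i$ satisfy $T\gol_i=\ker dx_i$; together with the identity $\kcurv(\web_\Omega)=\mathrm{pr}_O(\kcurv(\web_{\varphi;\Omega}))$ coming from Theorem \ref{thm:dfw-ricci-general}, this lets us transfer information between the two webs.

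First, (1) $\Rightarrow$ (2) is a direct computation in coordinates where $\Omega=dx_1\wedge\cdots\wedge dx_m$: the defining function of any hypersurface $F$ with $T\fol\subseteq TF$ depends only on the variables $x_{\pi_\fol}$, so the product region $K$ splits as $K_\fol^\pm\times K_\gol^\pm\times\prod_{i\ne\fol,\gol}K_i$ along $F$ and $G$, and the volumes factorize, yielding $ac=bd$. Step (2) $\Rightarrow$ (3) is the same elementary algebraic step used in Theorem \ref{thm:dfw-geom}, and (5) $\Rightarrow$ (1) is the sufficiency part of Theorem \ref{thm:dfw-logh}.

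The crux is (3) $\Rightarrow$ (5). I would specialize condition (3) to \emph{coordinate} hypersurfaces $F=\{x_k=c_1\}$ with $k\in\pi_i$ and $G=\{x_l=c_2\}$ with $l\in\pi_j$ for $i\ne j$; these automatically satisfy $T\fol_i\subseteq TF$ and $T\fol_j\subseteq TG$. The restricted statement is precisely condition \ref{thm:dfw-geom:cut} of Theorem \ref{thm:dfw-geom} applied to $\web_{\varphi;\Omega}$ for pairs $(\gol_k,\gol_l)$ with $k\not\sim l$ in the sense of $(\ref{eq:dfw-sim})$. Replaying the argument in the proof of Theorem \ref{thm:dfw-geom} -- or, more efficiently, invoking the contrapositive of the quantitative Theorem \ref{thm:dfw-geom-quantitative} -- we conclude that every coefficient $\kappa_{kl}$ of $\kcurv(\web_{\varphi;\Omega})$ with $k\not\sim l$ vanishes. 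By Theorem \ref{thm:dfw-ricci-general} these are exactly the components of $\kcurv(\web_\Omega)$, giving (5).

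Finally, (4) $\Leftrightarrow$ (5) follows from the definition of $\mathcal{H}_{p;\varphi}$, which is generated by the volume-preserving loops $\ell_{p;\gol_k,\gol_l}$ with $k\not\sim l$: Lemma \ref{thm:dfw-loop-taylor} identifies the order-$3$ coefficient of each such loop at $p$ with a nonzero multiple of $\kappa_{kl}(p)$, and again Theorem \ref{thm:dfw-ricci-general} identifies the latter with the corresponding component of $\kcurv(\web_\Omega)_{|p}$; triviality of $\mathcal{H}_p$ for every $p$ is therefore equivalent to pointwise vanishing of $\kcurv(\web_\Omega)$. I expect the subtlest point to be the verification in (3) $\Rightarrow$ (5) that restricting the cut condition to coordinate hypersurfaces loses no essential information, i.e. that one indeed recaptures the full codimension-$1$ cut condition on $\web_{\varphi;\Omega}$ for exactly the index pairs needed to recover $\mathrm{pr}_O(\kcurv(\web_{\varphi;\Omega}))$; the remainder of the argument is bookkeeping and appeals to results already at hand.
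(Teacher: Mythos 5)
Your overall strategy---reducing to the codimension-$1$ case via the auxiliary web $\web_{\varphi;\Omega}$, specializing the hypersurfaces $F,G$ to coordinate level sets, and using Theorem \ref{thm:dfw-ricci-general} to identify $\kcurv(\web_\Omega)$ with the off-diagonal part of $\kcurv(\web_{\varphi;\Omega})$---is exactly the paper's. The implications $\ref{thm:dfw-geom-hicodim:triv}\Rightarrow\ref{thm:dfw-geom-hicodim:taba}\Rightarrow\ref{thm:dfw-geom-hicodim:cut}$ and $\ref{thm:dfw-geom-hicodim:curv}\Rightarrow\ref{thm:dfw-geom-hicodim:triv}$ are fine, and your ``replay the codimension-$1$ argument'' version of $\ref{thm:dfw-geom-hicodim:cut}\Rightarrow\ref{thm:dfw-geom-hicodim:curv}$ is sound (it implicitly passes through triviality of the loops $\ell_{p;\gol_k,\gol_l}$ for $k\not\sim l$, which is the paper's route $\ref{thm:dfw-geom-hicodim:cut}\Rightarrow\ref{thm:dfw-geom-hicodim:hol}\Rightarrow\ref{thm:dfw-geom-hicodim:curv}$). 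A minor caveat: the ``more efficient'' shortcut via the contrapositive of Theorem \ref{thm:dfw-geom-quantitative} does not apply verbatim, since that theorem controls the sign of $bd-ac$ and therefore negates condition $\ref{thm:dfw-geom-hicodim:taba}$, not the weaker cut condition $\ref{thm:dfw-geom-hicodim:cut}$; to use it you would additionally have to construct a small region realizing $a=b$ and $a+b=c+d$ before deriving a contradiction from $ac\neq bd$.

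The genuine gap is in your claimed equivalence $\ref{thm:dfw-geom-hicodim:hol}\Leftrightarrow\ref{thm:dfw-geom-hicodim:curv}$. The Taylor-coefficient argument via Lemma \ref{thm:dfw-loop-taylor} only runs one way: if the loops are identity germs, then in particular their third-order coefficients $2\kappa_{kl}$ vanish, giving $\ref{thm:dfw-geom-hicodim:hol}\Rightarrow\ref{thm:dfw-geom-hicodim:curv}$. The converse does not follow---vanishing of $\kappa_{kl}(p)$ kills one coefficient of the $3$-jet of $\ell_{p;\gol_k,\gol_l}$, which is far from forcing the loop to be the identity germ, so triviality of $\mathcal{H}_{p;\varphi}$ (let alone of $\mathcal{H}_p$, which quantifies over \emph{all} adapted charts) is not established. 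Since your cycle is $(1)\Rightarrow(2)\Rightarrow(3)\Rightarrow(5)\Rightarrow(1)$ with $(4)$ attached only through this purported biconditional, condition $\ref{thm:dfw-geom-hicodim:hol}$ is never shown to follow from the others. The paper avoids this by proving $\ref{thm:dfw-geom-hicodim:cut}\Rightarrow\ref{thm:dfw-geom-hicodim:hol}$ directly with the geometric volume-comparison argument from the codimension-$1$ case (coordinate level sets playing the role of $F,G$), which shows each generating loop is genuinely the identity; you could equally repair your version by noting that when $\kcurv(\web_\Omega)=0$ the density factors as $h=\prod_i g_i(\mathbf{x}_i)$ in adapted coordinates, so reflections in directions belonging to different blocks act on disjoint variables and commute exactly. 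Either way, an argument for ``something $\Rightarrow\ref{thm:dfw-geom-hicodim:hol}$'' beyond jet-level information is required.
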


\begin{rem}
  The naive analogue of the splitting condition $\ref{thm:dfw-geom:split}$ of
  Theorem \ref{thm:dfw-geom}, namely \emph{``for any region bounded by leaves
  $K$ and each $k = 1, 2, \ldots, n$ there exist hypersurfaces $F_i$
  satisfying $T\fol_i\subseteq TF_i$ for $i = 1, 2, \ldots, k$ which subdivide
  $K$ into $2^k$ subregions with equal $\Omega$-volumes''} is not equivalent to
  the other conditions, despite being satisfied by trivial divergence-free
  webs. For example, in codimension $(1,2)$ the above property holds for every
  divergence-free $2$-web $(M,\fol,\gol,\Omega)$ due to a variant of the
  Stone-Tukey ham sandwich theorem. More specifically, we subdivide a
  region bounded by leaves $K$ by a $2$-dimensional plaque $F\in\fol$ into two
  subregions with equal $\Omega$-volumes, and then we choose a coordinate plane
  $H$ with $T\gol\subseteq TH$ subdividing both subregions into subregions with
  equal $\Omega$-volumes by means of the classical Borsuk-Ulam theorem.
\end{rem}
\begin{proof}[Proof of Theorem \ref{thm:dfw-geom-hicodim}]
  The proof proceeds as in Theorem \ref{thm:dfw-geom} with some minor
  modifications, which will be highlighted below.

  The implications from $\ref{thm:dfw-geom-hicodim:triv}$ through
  $(\ref{thm:dfw-geom-hicodim:taba})$ to
  $\ref{thm:dfw-geom-hicodim:cut}$ and from $\ref{thm:dfw-geom-hicodim:curv}$
  to $\ref{thm:dfw-geom-hicodim:triv}$ are established in essentially the same
  way as their codimension-$1$ counterparts.

  To deduce $\ref{thm:dfw-geom-hicodim:hol}$ from
  $\ref{thm:dfw-geom-hicodim:cut}$, note that the holonomy in arbitrary
  codimension is generated by conjugates of the holonomy group
  $\mathcal{H}_{p;\varphi}$ inside some fixed coordinates; it is therefore
  sufficient to trivialize $\mathcal{H}_{p;\varphi}$ for an arbitrary
  coordinate chart $(U,\varphi)$ by following closely the proof in the
  codimension $1$-case. In order to use property
  $\ref{thm:dfw-geom-hicodim:cut}$, let the coordinate function level-sets
  assume the role of the hypersurfaces $F,G$ subdividing the region $K$.

  Finally, assuming $\ref{thm:dfw-geom-hicodim:hol}$, fix a
  $\web_\Omega$-adapted coordinate chart $(U,\varphi)$ with coordinates
  $(x_1,\ldots,x_m)$ and let $\gol_i$ be the foliations with $T\gol_i=\ker
  dx_i$ for $i=1,\ldots,m$ comprising the local web $\web_{\varphi;\Omega}$ on
  $U$. Since the holonomy group $\mathcal{H}_{p;\varphi}$ is trivial by
  assumption, the volume-preserving loops $\ell_{p;\gol_i,\gol_j}$ are all
  identities for $i\not\sim j$ (see $(\ref{eq:dfw-sim})$). The equality
  $\kcurv(\web_\Omega) = \mathrm{pr}_O(\kcurv( \web_{\varphi;\Omega})) =
  \smsum{i\not\sim j} \kappa_{ij}\,dx_i\,dx_j$, which holds by an application
  of Theorem \ref{thm:dfw-ricci-general} to the natural connection of
  $\web_{\varphi;\Omega}$, leads to $\kappa_{ij} = 0$ for each $i\not\sim j$
  and $p\in M$ via Lemma \ref{thm:dfw-loop-taylor}. This is exactly condition
  $\ref{thm:dfw-geom-hicodim:curv}$.
\end{proof}

\section{Applications in general relativity}

In this section we consider divergence-free $2$-webs $\web_{dV}$ arising
from a $4$-dimensional spacetime $(M, g)$ with metric $g$ of
signature $(\mathord{-}\mathord{+}\mathord{+}\mathord{+})$, its natural volume
element $dV$ and a~single codimension-$1$ spacelike foliation $\fol$ of $M$, as
defined below.

The foliation $\fol$ is \emph{spacelike} if the restriction of $g$ to each of
its leaves is positive-definite. This means that the leaves of $\fol$ can be
locally treated as \emph{hypersurfaces of simultaneity} in the ambient
spacetime. The assignment to each event $p\in M$ a plaque $L\ni p$ of $\fol$ in
a certain coordinate system adapted to the foliation $\fol$ can be treated as a
measurement of local time $t$ by some observer; the plaque itself
(roughly) represents the state of the world at time $t$. The orthogonal
complement $T\fol^{\bot}$ of $T\fol$ with respect to $g$ is a $1$-dimensional
distribution, hence is integrable; the corresponding foliation $\fol^\bot$ is
necessarily timelike and can be treated as trajectories of a family of
\emph{Eulerian observers}. Let $dV = \sqrt{-\det g}$ be the pseudo-Riemannian
volume form of the metric $g$. The divergence-free $2$-web $\web_{dV}$ in
question is $(M,\fol,\fol^\bot,dV)$.

The $\web_{dV}$-adapted coordinates are called \emph{normal (Eulerian)
coordinates} \cite{3p1}. In these coordinates, the metric tensor splits into a
sum
\begin{equation}
  \label{eq:dfw-split-metric}
  g = -\alpha^2\,dt^2 + \gamma,
\end{equation}
where the $t$-coordinate parametrizes the leaves of $\fol$, $\alpha$ is the
\emph{lapse function} and $\gamma$ is a positive-definite metric on $T\fol$.
Of course, according to circumstances, one can find complementary foliations
$\gol$ which are better suited to the situation at hand; the above choice is
motivated mainly by the clarity of exposition and can be altered without
difficulty.

\begin{exmp}
  Consider the event horizon exterior of the Schwarzschild spacetime
  $(M, g)$ in standard radial coordinates $(t,r,\theta,\phi)$, where $M =
  \Rb{4}\setminus\set{r\leq 2m}$ for some positive mass $m$ and
  \begin{equation}
    g = -(1-\frac{2m}{r})\,dt^2 + (1-\frac{2m}{r})^{-1}\,dr^2
      + r^2\,d\theta^2 + r^2\sin^2(\theta)\,d\phi^2.
  \end{equation}
  The hypersurfaces of constant time $\set{t=c}$ for $c\in\Rb{1}$ define a
  foliation $\fol$ of $M$. Its orthogonal complement $\fol^{\bot}$ is exactly
  the set of curves $\set{r=c_r, \theta=c_\theta, \phi=c_\phi}$ for
  $c_r,c_\theta,c_\phi\in\Rb{1}$ by virtue of the form of $g$ in these
  coordinates. We also have
  \begin{equation}
    dV = r^2\sin\theta\cdot dt\wedge dr\wedge d\theta\wedge d\phi.
  \end{equation}
  Since the coordinate system is $\web_{dV}$-adapted, the divergence-free
  $2$-web $\web_{dV}$ is trivial by Theorem \ref{thm:dfw-logh}.
\end{exmp}

\begin{exmp}
  The constant-$T$ foliation $\fol$ corresponding to Gullstrand-Painlevé
  coordinates $(T, r,\theta, \phi)$ on the Schwarzschild spacetime
  $(\Rb{1}\times(\Rb{3}\setminus\set{0}),g)$, in
  which the metric $g$ takes the form
  \begin{equation}
    g = -(1-\frac{2m}{r})\,dT^2 + 2\sqrt{\frac{2m}{r}}\,dTdr + dr^2
      + r^2\,d\theta^2 + r^2\sin^2(\theta)\,d\phi^2,
  \end{equation}
  is spacelike and genreates a divergence-free $2$-web $\web_{dV}$ which is
  not trivial. To see this, switch to a $\web_{dV}$-adapted coordinate system
  $(T,R,\theta,\phi)$ with $R=\smfrac{2}{3}r^{3/2}+\sqrt{2m}T$ (known as
  Lemaître coordinate system) to bring $g$ into the split form
  $(\ref{eq:dfw-split-metric})$
  \begin{equation}
    g = -dT^2 + r^{-1}\,dR^2 + r^{2}\,d\theta^2
      + r^{2}\sin^2(\theta)\,d\phi^2,\quad r
      = (\smfrac{3}{2}R-\smfrac{3}{2}\sqrt{2m}T)^{2/3}.
  \end{equation}
  The claim follows from Theorem \ref{thm:dfw-logh}, since the
  corresponding volume form
  \begin{equation}
    dV = \smfrac{3}{2}(R-\sqrt{2m}T)\sin\theta
      \cdot dT\wedge dR\wedge d\theta\wedge d\phi
  \end{equation}
  yields a nonvanishing nonuniformity tensor $\kcurv(\web_{dV}) =
  \sqrt{2m}(R-\sqrt{2m}T)^{-2}\,dTdR$. Its form in the initial
  Gullstrand-Painlevé coordinate system is
  \begin{equation}
    \kcurv(\web_{dV}) = \smfrac{9}{2}mr^{-3}\,dT^2
     + \smfrac{9}{4}\sqrt{2m}\,r^{-5/2}\,dTdr.
  \end{equation}
  Since $g(\basis{T}, \basis{T}) = -1$, the variable $T$ can be interpreted as
  the proper time of Eulerian observers falling radially into the Schwarzschild
  black hole. Near the singularity itself, the nonuniformity tensor grows
  without bound.
\end{exmp}

The triviality of the induced divergence-free $2$-web has some relevance in
relativistic fluid dynamics, where the laws of motion of the fluid expressed in
$\web_{dV}$-adapted coordinates include the conservation of energy
$(\ref{eq:dfw-gr-ene})$ and momentum $(\ref{eq:dfw-gr-mom})$, both of which
depend on the spacetime volume element $dV$ in a nontrivial way. These laws,
expressed using the notation introduced in \cite[Part 11]{andersson1} and the
Einstein summation convention, reduce to
\begin{equation}
  \nabla_\alpha T^{\alpha\beta} = 0,
\end{equation}
for the stress-energy tensor $T$ bound by certain constitutive relations, where
$\nabla_\alpha$ denotes the Levi-Civita covariant differentiation. The above
condition on the divergence of $T$ results in appearance of the Lorentzian
volume density in the resulting equations. In normal coordinates, assuming that
in the tensors below the $0^\text{th}$ index corresponds to time-like
components, while the indices $i,j,k=1,2,3$ refer to the spatial ones, the
equations read \cite{lefloch}
\begin{align}
  \label{eq:dfw-gr-ene}
  \begin{split}
    \basis{t}(\alpha^3\gamma^{1/2}T^{00})
      + \basis{x_j}(\alpha^3\gamma^{1/2}T^{0j})
      &= \gamma^{1/2}\alpha^2(\basis{t}\alpha)T^{00} -
        \smfrac{1}{2}\gamma^{1/2}\alpha(\basis{t}g_{jk})T^{jk}
  \end{split}\\
  \label{eq:dfw-gr-mom}
  \begin{split}
    \basis{t}(\alpha^3\gamma^{1/2}T^{i0})
      + \basis{x_j}(\alpha^3\gamma^{1/2}T^{ij})
      &= \smfrac{1}{2}\gamma^{1/2}\alpha^3g^{ik}(\basis{x_k}\alpha^2)T^{00}
        -\gamma^{1/2}\alpha^3g^{ik}(\basis{t}g_{jk})T^{j0} \\
        &{}-\gamma^{1/2}\alpha^3\Gamma^i_{jk}T^{jk}
        -T^{0i}\basis{t}(\gamma^{1/2}\alpha^3) \\
        &{}+2T^{0i}\gamma^{1/2}\alpha^2(\basis{t}\alpha)
        +2\alpha^3T^{ij}(\basis{x_j}\gamma^{1/2}),
  \end{split}
\end{align}
where $\alpha$ is the lapse function, $\Gamma_{jk}^i$ are the Christoffel
symbols and $\gamma^{1/2}$ is the coefficient of
the spacelike volume element on $T\fol$ induced from $dV$.

If the nonuniformity tensor $\kcurv(\web_{dV})$ of the web $\web_{dV}$
defined in the introduction vanishes identically, then by Theorem
\ref{thm:dfw-logh} one can pick a $\web_{dV}$-adapted coordinate system
$(t,x_1,x_2,x_3)$ in which the density of the volume element $dV =
\alpha\gamma^{1/2}\, dt\wedge dx_1\wedge dx_2\wedge dx_3$ becomes constant.
This simplifies the above equations, completely eliminating their dependence on
$\gamma^{1/2}$. If we assume that $\alpha\gamma^{1/2} = 1$, then from
$(\ref{eq:dfw-gr-ene})$ and $(\ref{eq:dfw-gr-mom})$ we obtain
\begin{align}
  \label{eq:dfw-gr1-ene}
  \begin{split}
    \basis{t}(\alpha^2T^{00}) + \basis{x_j}(\alpha^2 T^{0j})
      &= \smfrac{1}{2}(\basis{t}\alpha^2)T^{00} -
        \smfrac{1}{2}(\basis{t}g_{jk})T^{jk}
  \end{split}\\
  \label{eq:dfw-gr1-mom}
  \begin{split}
    \basis{t}(\alpha^2T^{i0}) + \basis{x_j}(\alpha^2T^{ij})
      &= \smfrac{1}{2}\alpha^2g^{ik}(\basis{x_k}\alpha^2)T^{00}
        -\alpha^2g^{ik}(\basis{t}g_{jk})T^{j0} \\
        &{}-\alpha^2\Gamma^i_{jk}T^{jk}
        -(\basis{x_j}\alpha^2)T^{ij}.
  \end{split}
\end{align}

This form of the equations is more convenient to work with, and due to its
simplicity it might improve numerical accuracy if $\alpha^2$ and $g$ are well
behaved in the new coordinates. Given $\alpha^2$ and the other components of
the metric, one can avoid taking any square roots and determinants in order to
evolve the stress-energy tensor $T$ in time. The equations can be simplified
further if we are able to choose $\fol$ such that $\fol^\bot$ is \emph{totally
geodesic}. This amounts to the equality $d\alpha_{|T\fol}=0$ by a direct
verification of the identity $g(\nabla_n n, v) = d\log\alpha(v)$ for the unit
normal $n\in T\fol^\bot$ with $\norm{n}_{g} = -1$ and each $v\in T\fol$. In
this setting, the leaves of the orthogonal foliation $T\fol^\bot$ are
geodesics; a foliation $\fol$ of this kind is called a \emph{geodesic
slicing} of the spacetime \cite[{}10.2]{3p1}.

One last thing to note is that the Levi-Civita connection $\nabla$ is
\emph{not} a $\web_{dV}$-connection in general (see Definition
\ref{def:dfw-affine}). It is so only if both $\fol$ and $\fol^\bot$ are totally
geodesic, in the sense that for all $X,Z\in\Gamma(T\fol)$ and
$Y,W\in\Gamma(T\fol^\bot)$
\begin{equation}
  \nabla_ZX\in\Gamma(T\fol),\qquad\qquad \nabla_WY\in\Gamma(T\fol^\bot).
\end{equation}
This condition is equivalent to $d\alpha_{|T\fol}=0$ and
$K_{ij}=\frac{1}{2\alpha}\basis{t}\gamma_{ij}=0$ for $i,j=1,2,3$. As a
consequence, it is usually not possible to determine local triviality of
$\web_{dV}$ using the Ricci tensor of the Levi-Civita connection alone, as the
results of section \ref{sec:dfw-nonuniformity} seem to suggest, except in quite
special geometries defined by the above two sets of equations.

  \printbibliography

\end{document}